\newcommand{\linktojournal}[1]{\relax}
\newcommand{\R}{\mathbb{R}}
\newcommand{\Z}{\mathbb{Z}}
\newcommand{\N}{\mathbb{N}}
\newcommand{\pa}{\partial}
\DeclareMathOperator{\supp}{supp}
\newcommand{\lr}[1]{{}\langle{}#1{}\rangle{}}
\newcommand{\bigLR}[1]{{}\big\langle{}#1{}\big\rangle{}}
\newcommand{\BigLR}[1]{{}\Big\langle{}#1{}\Big\rangle{}}
\newcommand{\LR}[1]{{}\left\langle{}#1{}\right\rangle{}}
\newcommand{\rev}[1]{{{}#1{}}}
\newtheorem{theorem}{Theorem}[section]
\newtheorem{lemma}[theorem]{Lemma}
\newtheorem{proposition}[theorem]{Proposition}
\newtheorem{corollary}[theorem]{Corollary}
\theoremstyle{remark}
\newtheorem{remark}{Remark}[section]
\theoremstyle{definition}
\newtheorem{definition}{Definition}[section]
\numberwithin{equation}{section}
\def\@cite#1#2{[{{\bfseries #1}\if@tempswa , #2\fi}]}
\begin{document}
\begin{center}
\Large{{\bf 
Asymptotic expansion for 
a class of
second-order evolution equations 
in the energy space 
and its applications
}}
\end{center}

\vspace{5pt}

\begin{center}
Motohiro Sobajima%
\footnote{
Department of Mathematics, 
Faculty of Science and Technology, Tokyo University of Science,  
2641 Yamazaki, Noda-shi, Chiba, 278-8510, Japan,  
E-mail:\ {\tt msobajima1984@gmail.com}}
\end{center}

\newenvironment{summary}{\vspace{.5\baselineskip}\begin{list}{}{%
     \setlength{\baselineskip}{0.85\baselineskip}
     \setlength{\topsep}{0pt}
     \setlength{\leftmargin}{12mm}
     \setlength{\rightmargin}{12mm}
     \setlength{\listparindent}{0mm}
     \setlength{\itemindent}{\listparindent}
     \setlength{\parsep}{0pt}
     \item\relax}}{\end{list}\vspace{.5\baselineskip}}
\begin{summary}
{\footnotesize {\bf Abstract.}
In this paper, we mainly discuss asymptotic profiles of 
solutions to a class of abstract second-order evolution equations 
of the form $u''+Au+u'=0$ in real Hilbert spaces, 
where $A$ is a nonnegative selfadjoint operator. 
The main result is the asymptotic expansion 
for all initial data belonging to the energy space, which is naturally expected.
This is an improvement for the previous work \cite{Sobajima_MathAnn}
(required a sufficient regularity).
As an application, we focus our attention to 
damped wave equations in an exterior domain $\Omega$ in $\R^N$ $(N\geq 2)$ with the Dirichlet boundary condition on $\pa \Omega$. 
By using the asymptotic expansion in the present paper,
we could derive the optimal decay rates of energy functional of solutions to damped wave equations. 
Moreover, some decay estimates of the local energy (energy functional restricted in a compact subset of $\Omega$) 
can be observed via the asymptotic expansion.
}
\end{summary}

{\footnotesize{\it Mathematics Subject Classification}\/ (2020): %
	35L90, 
	35B40, 
    35L20, 
    35L71. 
}

{\footnotesize{\it Key words and phrases}\/: 
second-order evolution equations in Hilbert spaces, asymptotic expansion, energy methods.
}

\tableofcontents
\section{Introduction}

In this paper, we consider 
the initial-value problem of the following 
second-order evolution equation 
in a real Hilbert space $H$: 
\begin{equation}\label{intro:abst-dw}
\begin{cases}
u''(t)+Au(t)+u'(t)=0 & \text{in}\ \R_+, 
\\
(u,u')(0)=(u_0,u_1),
\end{cases}
\end{equation}
where $\R_+=(0,\infty)$ and $A$ is a nonnegative selfadjoint operator in $H$ 
(with the inner product $\lr{\cdot,\cdot}$ and the norm $\|\cdot\|$) endowed with domain $D(A)$. 
The pair $(u_0,u_1)$ belongs to 
the energy space $\mathcal{H}=D(A^{1/2})\times H$
and in this case
a weak solution 
\rev{%
$u\in C([0,\infty);D(A^{1/2}))\cap C^1([0,\infty);H)$ 
}
of the problem \eqref{intro:abst-dw} uniquely exists 
(see, e.g., Cazenave--Haraux \cite{CHbook} or Pazy \cite{Pazybook}).
The aim of the present paper is 
to provide the asymptotic expansion of weak solutions to \eqref{intro:abst-dw} 
for arbitrary initial data in the energy space $\mathcal{H}$. 

First, the energy functional 
\[
E(u;t)=\|u'(t)\|^2+\|A^{1/2}u(t)\|^2
\]
for the solution $u$ of \eqref{intro:abst-dw}
plays a central role to analyse properties of $u$. 
In particular, the energy identity 
\[
E(u;t)+2\int_0^t\|u'(s)\|^2\,ds=E(u;0), \quad t>0
\]
is well-known. 
In Ikehata--Nishihara \cite{IkNi2003}, 
they tried to analyse the problem \eqref{intro:abst-dw} 
via the energy method with a technique 
inspired by Morawetz \cite{Morawetz1961} 
and found that the weak solution $u$ of \eqref{intro:abst-dw}
has 
the asymptotic behavior $v(t)=e^{-tA}(u_0+u_1)$ with the estimate
\[
\|u(t)-v(t)e^{-tA}\|\leq C_{\delta}
\|(u_0,u_1)\|_{\mathcal{H}}(1+t)^{-1}\big(1+\log (1+t)\big)^{1/2+\delta}, 
\quad t\geq0
\]
for $(u_0,u_1)$ belonging to 
the energy space $\mathcal{H}$ 
(with the norm $\|(u_0,u_1)\|_{\mathcal{H}}^2=\|u_0\|_{D(A^{1/2})}^2+\|u_1\|^2$), where $\{e^{-tA}\}_{t\geq 0}$ is the $C_0$-semigroup on $H$ 
generated by $-A$. 
Actually, the discussion in \cite{IkNi2003} was motivated 
by the observation of the behavior of weak solutions to 
the initial-boundary value problem of the damped wave equation
\begin{equation}\label{intro:ext-dw}
\begin{cases}
\pa_t^2u(x,t)-\Delta u(x,t)+\pa_t u(x,t) =0 
&\text{in}\ \Omega \times \R_+, 
\\
u(x,t) =0 
&\text{on}\ \pa\Omega \times \R_+, 
\\
(u,\pa_tu)(x,0)=(u_0(x),u_1(x))
&\text{in}\ \Omega, 
\end{cases}
\end{equation}
where $\Omega=\R^N$ $(N\in \N)$ 
or $\Omega$ is a connected exterior domain in $\R^N$ ($N\geq 2$)
with a smooth boundary $\pa\Omega$; note that $\N$ denotes 
the set of all positive integers.
The logarithmic factor on the right-hand side of the above inequality 
is removed by Chill--Haraux \cite{ChHa2003}. 
In Radu--Todorova--Yordanov \cite{RTY2011}, 
applying spectral analysis, the authors 
proved the following estimate 
\[
\|u(t)-v(t)\|
\leq C
t^{-1}\big(\|e^{-\frac{t}{2}A}u_0\|+\|e^{-\frac{t}{2}A}u_1\|\big)
+e^{-\frac{t}{16}}\big(\|u_0\|+\|(1+A^{1/2})^{-1}u_1\|\big),\quad t\geq 0.
\]
As a consequence of these previous works, 
it has been clarified that the $C_0$-semigroup $\{e^{-tA}\}_{t\geq0}$ 
indicates the asymptotic behavior of weak solutions to \eqref{intro:abst-dw}. 

In the recent research Sobajima \cite{Sobajima_MathAnn}, 
it is shown that 
if the pair satisfies $(u_0,u_1)\in D(A^{n-\frac{1}{2}})$
for some $n\in\N$, then 
the solution $u$ of \eqref{intro:abst-dw} satisfies 
\begin{equation}\label{intro:expansion-reg}
\left\|u(t)-\sum_{\ell=0}^{n-1}v_\ell(t)\right\|
\leq 
C_nt^{-(n-\frac{1}{2})}\|(u_0,u_1)\|_{D(A^{n-\frac{1}{2}})\times D(A^{n-\frac{1}{2}})},
\quad t\geq 0,
\end{equation}
where $v_0(t)=e^{-tA}(u_0+u_1)$ and for $\ell\in\N$, 
\begin{equation}\label{intro:expansion-profiles}
v_\ell(t)
=
A^\ell\left(
\sum_{j=0}^\ell
\begin{pmatrix}
2\ell\\\ell+j
\end{pmatrix}\frac{(-tA)^j}{j!}e^{-tA}(u_0+u_1)
-
\sum_{k=0}^{\ell-1}
\begin{pmatrix}
2\ell-1\\ \ell+k
\end{pmatrix}\frac{(-tA)^k}{k!}e^{-tA}u_0
\right).
\end{equation}
The above work can be regarded as 
a generalization of the result in Takeda \cite{Takeda2015} 
about the asymptotic expansion (with a different representation) 
for the Cauchy problem of the damped wave equation
\begin{equation}\label{intro:dw}
\begin{cases}
\pa_t^2u(x,t)-\Delta u(x,t)+\pa_t u(x,t) =0 
&\text{in}\ \R^N \times \R_+, 
\\
(u,\pa_tu)(x,0)=(u_0(x),u_1(x))
&\text{in}\ \R^N. 
\end{cases}
\end{equation}
It is remarkable that 
the asymptotic expansion in \cite{Sobajima_MathAnn} 
is also applicable to the exterior problem of the damped wave equation \eqref{intro:ext-dw}.

Of course, there are many previous works 
dealing with the problem \eqref{intro:dw} 
via the solution formula with the Fourier transform.
In the pioneering work \cite{Matsumura1976} by Matsumura, 
it is shown that 
the solution $u$ of \eqref{intro:dw} satisfies 
\[
\|\pa_t^k\pa_x^\alpha u (t)\|_{L^2}
\leq 
C(1+t)^{-\frac{N}{2}(\frac{1}{q}-\frac{1}{2})-k-\frac{|\alpha|}{2}}
\|(u_0,u_1)\|_{(H^{m}\cap L^q)\times (H^{m-1}\cap L^q)}, 
\quad 
t\geq 0
\]
(and also $L^\infty$-estimates) with $q\in [1,2]$ and $m=k+|\alpha|$, 
where $k\in \Z_{\geq 0}$, $\alpha\in \Z_{\geq 0}^N$ 
($\Z_{\geq 0}=\N\cup\{0\}$ and $|\alpha|=\sum_{k=1}^N\alpha_k$).
Here we have used the notations of the Lebesgue spaces $L^q=L^q(\R^N)$ 
and the Sobolev spaces $H^m=H^m(\R^N)$. 
Although there are many subsequent papers 
for asymptotic behavior of weak solutions to \eqref{intro:dw} and also \eqref{intro:ext-dw}, 
we will not enter details 
(For subsequent papers related to 
the Matsumura estimates and the asymptotic behavior
in various situations and their applications 
to nonlinear problems, see e.g., Hsiao--Liu \cite{HsLi1992}, 
Kawashima--Nakao--Ono \cite{KaNaOn1995},
Karch \cite{Karch2000},
Yang--Milani \cite{YaMi2000}, 
Ikehata--Ohta \cite{IO2002},
Nishihara \cite{Nishihara2003}, 
Narazaki \cite{Narazaki2004},
Hosono--Ogawa \cite{HoOg2004},
Ikeda--Inui--Okamoto--Wakasugi \cite{IIOW2019},
Ikeda--Taniguchi--Wakasugi \cite{ITWpre} 
and there references therein). 

It should be 
\rev{%
pointed out
}
that 
the asymptotic behavior
can be found in \cite{IkNi2003},\cite{ChHa2003} and also \cite{RTY2011} 
with initial data in the energy space $\mathcal{H}$, 
but the estimate \eqref{intro:expansion-profiles} with $n=1$
requires the additional regularity $u_1\in D(A^{1/2})$.
The purpose of the present paper is to
fill the gap explained above, 
that is, 
to justify the asymptotic expansion 
with initial data in energy space $\mathcal{H}$ {\it without any additional regularity}. 

To state the result of the present paper, 
we give the precise definition of weak solutions 
to \eqref{intro:abst-dw} and the corresponding asymptotic profiles.
\begin{definition}[Definition of weak solutions]
Let $A$ be nonnegative and selfadjoint in the Hilbert space $H$ with domain $D(A)$. 
For $(u_0,u_1)\in \mathcal{H}=D(A^{1/2})\times H$, 
the function $u:[0,\infty)\to H$ is called a {\it weak solution} of \eqref{intro:abst-dw} if 
$u$ belongs to the class $C^1([0,\infty);H)\cap C([0,\infty);D(A^{1/2}))$ and 
$\mathcal{U}(\cdot)=(u(\cdot),u'(\cdot))\in C([0,\infty);\mathcal{H})$ satisfies
$\mathcal{U}(t)=e^{t\mathcal{L}}(u_0,u_1)$ for all $t\geq 0$, 
where $\{e^{t\mathcal{L}}\}_{t\geq0}$ 
is the $C_0$-semigroup on $\mathcal{H}$ generated by 
$\mathcal{L}(f,g)=(g,-Af-g)$ endowed with domain 
$D(\mathcal{L})=D(A)\times D(A^{1/2})$. 
\end{definition}
\begin{definition}[{\cite{Sobajima_MathAnn}}]\label{def:asymptotics}
For $(u_0,u_1)\in \mathcal{H}$, 
define a family of functions $\{V_\ell (\cdot)\}_{\ell\in \Z_{\geq 0}}$ as
\begin{align*}
v_0(t)
&=e^{-tA}(u_0+u_1), 
\\
v_\ell(t)
&=
A^\ell\left(
\sum_{j=0}^\ell
\begin{pmatrix}
2\ell\\\ell+j
\end{pmatrix}\frac{(-tA)^j}{j!}e^{-tA}(u_0+u_1)
-
\sum_{k=0}^{\ell-1}
\begin{pmatrix}
2\ell-1\\ \ell+k
\end{pmatrix}\frac{(-tA)^k}{k!}e^{-tA}u_0
\right), \quad \ell\in \N.
\end{align*}
Also we define another family $\{V_n(\cdot)\}_{n\in\Z_{\geq0}}$ as 
$V_0(t)=0$ and $V_{n}(t)=\displaystyle\sum_{\ell=0}^{n-1}v_{\ell}(t)$ for $n\in\N$.
\end{definition}

The following is the main result of the present paper 
describing asymptotic expansion for general initial data.  

\begin{theorem}\label{intro:thm:abst-expansion}
For every $n\in \Z_{\geq 0}$, 
there exists a positive constant $C_{1,n}$ 
(depending only on $n$) 
such that the following assertion holds:
For every $(u_0,u_1)\in \mathcal{H}$, 
the corresponding weak solution $u$ of \eqref{intro:abst-dw}
satisfies 
\begin{align*}
\left\|
u(t)-V_{n}(t)
\right\|^2
+
tE\left(u-V_n;t\right)
&\leq C_{1,n}t^{-2n}\|(u_0,u_1)\|_{\mathcal{H}}^2,\quad t\geq 1
\end{align*}
where $\{V_n(\cdot)\}_{n\in \Z_{\geq0}}$ is given in Definition \ref{def:asymptotics}.
\end{theorem}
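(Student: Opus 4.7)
The plan is to proceed by spectral decomposition, reducing the estimate to pointwise bounds at each spectral value of $A$ and then reassembling via the spectral measure. Because $A$ is nonnegative and selfadjoint, the spectral theorem yields a projection-valued measure $\{E(\lambda)\}_{\lambda\ge 0}$ with $A=\int_0^\infty \lambda\,dE(\lambda)$, and equation \eqref{intro:abst-dw} reduces at each spectral point $\lambda$ to the scalar ODE $\hat u''+\lambda\hat u+\hat u'=0$ with characteristic roots $r_\pm(\lambda)=(-1\pm\sqrt{1-4\lambda})/2$. The spectral symbol $\widehat{V_n}(t;\lambda)$ can be read off directly from Definition \ref{def:asymptotics}, so that $\|u(t)-V_n(t)\|^2$ becomes the integral of $|\hat u(t;\lambda)-\widehat{V_n}(t;\lambda)|^2$ against the spectral measure $d\lr{E(\lambda)(u_0,u_1),(u_0,u_1)}$.

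First I would analyze the low-frequency regime $\lambda\in[0,\lambda_*)$ for any fixed $\lambda_*<1/4$. Writing $e^{r_+(\lambda)t}=e^{-\lambda t}\,e^{(r_+(\lambda)+\lambda)t}$ and expanding the second factor as a power series in $\lambda$ should, term by term, reproduce the family $\{v_\ell\}$ of Definition \ref{def:asymptotics}; the remainder after $n$ terms can then be expressed via an integral Taylor formula. The target pointwise bound is of the form
\[
\bigl|\hat u(t;\lambda)-\widehat{V_n}(t;\lambda)\bigr|^{2}
\;\le\; C\,t^{-2n}\bigl((1+\lambda)|\hat u_0|^2+|\hat u_1|^2\bigr),
\qquad \lambda\in[0,\lambda_*),
\]
with the crucial feature that the spectral weight on the right precisely reproduces the integrand of the $\mathcal{H}$-norm. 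This bound should follow because the remainder carries a factor $\lambda^{n}$ times a polynomial in $\lambda t$ times $e^{-c\lambda t}$, and the elementary inequality $\sup_{s>0}s^{a}e^{-cs}<\infty$ converts those $n$ powers of $\lambda$ into $n$ powers of $t^{-1}$ after integration against the spectral measure.

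The high-frequency regime $\lambda\ge\lambda_*$ is treated separately. For such $\lambda$ the characteristic roots have real part $-1/2$, so $|\hat u(t;\lambda)|\le Ce^{-t/2}$; and each $\hat v_\ell(t;\lambda)$ contains the factor $e^{-\lambda t}\le e^{-\lambda_* t}$ multiplied by polynomials in $\lambda t$ that are absorbed by part of this exponential. Combining the two regimes and integrating against the spectral measure yields the $\|u(t)-V_n(t)\|^2$ half of the inequality. For the energy piece $tE(u-V_n;t)$, I plan to apply the same spectral reduction to $\partial_t(u-V_n)$ and $A^{1/2}(u-V_n)$: differentiating the integral remainder in $t$, or inserting an extra factor $\lambda^{1/2}$, costs at most one half power of $\lambda$, thereby producing a pointwise bound with $t^{-2n+1}$ in place of $t^{-2n}$, which is exactly what $t^{-1}\cdot tE$ requires.

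The main obstacle I anticipate is securing the sharp low-frequency pointwise estimate with precisely the weight $(1+\lambda)|\hat u_0|^2+|\hat u_1|^2$: any extra spurious factor of $\lambda$ in the Taylor remainder would force one back to the stronger $D(A^{n-\frac12})^{2}$-type norm used in \cite{Sobajima_MathAnn} and would dissolve the entire improvement. Overcoming this requires exploiting the precise algebraic structure of the binomial expansions appearing in Definition \ref{def:asymptotics}, matching them coefficient by coefficient against the Taylor expansion of $e^{(r_+(\lambda)+\lambda)t}$ so that no surplus $\lambda$-factors arise; a similar bookkeeping has to be done for the contribution of the $e^{r_-(\lambda)t}$ mode, which must be absorbed either by the high-frequency estimate or by the combination $\hat u_0+\hat u_1$ appearing in $\hat v_0$.
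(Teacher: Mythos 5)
Your spectral route is genuinely different from the paper's argument and is viable in principle, but as written it stops exactly where the work begins. The paper never diagonalizes $A$: it writes $1_H=I_n+A^nJ_n$ with $J=(1_H+A)^{-1}$ (Lemma \ref{lem:unit-decomposition}), so that $u=I_nu+A^nJ_nu$ where $I_nu$ has initial data in $D(A^{n+1/2})\times D(A^n)$ and therefore falls under the regular-data expansion of \cite{Sobajima_MathAnn} (Theorem \ref{thm:regular-ini} and Remark \ref{rem:comment}), while $A^nJ_nu$ inherits the decay $t^{-2n}$ from the higher-order energy estimates of Lemma \ref{lem:regularity1}; everything is then closed by energy identities for the auxiliary quantity $\|\cdot\|_{\mathcal{E}^\sharp}$, with no spectral calculus beyond functional calculus for $e^{-tA}$. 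What the paper's method buys is that the delicate "no surplus power of $\lambda$" issue never arises: the loss of regularity is traded against the explicit smoothing of $J^n$. What your method would buy, if completed, is a sharper pointwise (in $\lambda$) description of the error, in the spirit of \cite{RTY2011} for $n=1$.

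The gap: the entire content of Theorem \ref{intro:thm:abst-expansion} is equivalent, by the spectral theorem, to your low-frequency pointwise bound with the weight $(1+\lambda)|\hat u_0|^2+|\hat u_1|^2$ (test it on data whose spectral measure concentrates near a single $\lambda$). You state this bound as the target, correctly identify that any stray factor of $\lambda$ in the remainder would collapse the argument back to the $D(A^{n-1/2})$ hypothesis of \cite{Sobajima_MathAnn}, and then defer the verification to "matching coefficient by coefficient." That verification is not routine: one must show that $\widehat{V_n}(t;\lambda)$, built from the binomial sums of Definition \ref{def:asymptotics}, agrees with the truncation of $a(\lambda)e^{r_+(\lambda)t}$ (where $a(\lambda)=(\hat u_1-r_-\hat u_0)/(r_+-r_-)$) to the precise order that leaves a remainder of size $\lambda^nP(\lambda t)e^{-c\lambda t}\bigl(|\hat u_0|+|\hat u_1|\bigr)$, with the expansion of the coefficient $a(\lambda)$ in powers of $\lambda$ and the expansion of $e^{(r_+(\lambda)+\lambda)t}$ interleaved so that the $\hat u_0$- and $\hat u_1$-contributions each close at order $\lambda^n$. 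Until those combinatorial identities are checked (together with the uniform treatment of the coalescing roots near $\lambda=1/4$, where $|\hat u|\le Ce^{-t/2}$ must be replaced by $C(1+t)e^{-t/2}$), the proof is a plan rather than an argument: the one step you flag as the main obstacle is the theorem.
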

\begin{remark}
For the case $m=1$, 
the estimate in Theorem \ref{intro:thm:abst-expansion} 
matches the result of \cite{ChHa2003} and \cite{RTY2011}.
We can also see the decay estimate for 
the energy functional $E(u;\cdot)$
for solution $u$ to \eqref{intro:abst-dw}. 
\end{remark}
\begin{remark}
The short-time estimate $(t\ll 1)$ 
depends on the regularity of the initial data.
In contrast, for the large-time behavior,  
we can observe all asymptotic profiles 
without any additional regularity.
\end{remark}
Incidentally, the above asymptotic expansion 
stated in Theorem \ref{intro:thm:abst-expansion} has 
several applications to the exterior problem 
of the damped wave equation \eqref{intro:ext-dw}.
In this case, 
the corresponding choice is $H=L^2(\Omega)$ and $A=-\Delta_{\Omega}$, where $\Delta_{\Omega}$ is 
so-called Dirichlet Laplacian in $L^2(\Omega)$
endowed with domain $D(\Delta_{\Omega})=H^2(\Omega)\cap H_0^1(\Omega)$. Hereafter, we will use the corresponding energy space 
\[
\mathbf{E}(\Omega)=D((-\Delta_{\Omega})^{1/2})\times H=H_0^1(\Omega)\times L^2(\Omega)
\]
and 
\[
\mathbf{L}^q(\Omega)=L^q(\Omega)\times L^q(\Omega)
\] 
for additional integrability conditions.
As is expected, the Dirichlet heat semigroup $\{e^{t\Delta_{\Omega}}\}_{t\geq 0}$
plays a crucial role for the analysis of \eqref{intro:ext-dw}. 
To describe the detail, we introduce 
the 
\rev{%
energy functional
}
$E_{\Omega}(u;\cdot)$ for the problem \eqref{intro:ext-dw} as 
\[
E_{\Omega}(u;t)=\int_{\Omega}\Big(|\pa_tu(x,t)|^2+|\nabla u(x,t)|^2\Big)\,dx, \quad t\geq 0
\]
for $u\in C^1([0,\infty);L^2(\Omega))\cap C([0,\infty);H_0^1(\Omega))$. 
Applying Theorem \ref{intro:thm:abst-expansion} with the usual $L^p$-$L^q$ estimate 
$(1\leq p\leq q\leq \infty)$
\[
\|e^{t\Delta_\Omega}f\|_{L^q(\Omega)}
\leq Ct^{-\frac{N}{2}(\frac{1}{p}-\frac{1}{q})}\|f\|_{L^p(\Omega)}, \quad t>0,
\]
we can find the following assertion. 

\begin{corollary}\label{intro:cor:decayest}
For every $q\in [1,2]$ and $m\in \Z_{\geq 0}$, 
there exists a positive constant $C_{2,q,m}$ such that 
the following assertion holds:
If $(u_0,u_1)\in \mathbf{E}(\Omega)\cap \mathbf{L}^q(\Omega)$, 
then the corresponding weak solution of \eqref{intro:ext-dw}
satisfies 
\begin{gather*}
\left\|u(t)-V_m(t)
\right\|_{L^2(\Omega)}^2+
tE_{\Omega}\left(u-V_m;t\right)
\leq C_{2,q,m}
t^{-N(\frac{1}{q}-\frac{1}{2})-2m}
\|(u_0,u_1)\|_{\mathbf{E}(\Omega)\cap \mathbf{L}^q(\Omega)}^2,
\quad t\geq 1, 
\end{gather*}
where $\{V_m\}_{m\in\Z_{\geq0}}$ is given in Definition \ref{def:asymptotics} 
with $A=-\Delta_{\Omega}$ in $L^2(\Omega)$. 
\end{corollary}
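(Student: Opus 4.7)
The plan is to reduce the corollary to Theorem \ref{intro:thm:abst-expansion} by combining it with standard smoothing estimates for the Dirichlet heat semigroup $\{e^{t\Delta_{\Omega}}\}_{t\ge 0}$. The key observation is that with $A=-\Delta_{\Omega}$, each profile $v_\ell(t)$ in Definition \ref{def:asymptotics} is a finite linear combination of expressions of the form $c_{j}\,t^{j}A^{\ell+j}e^{-tA}f$ with $f\in\{u_0,\,u_0+u_1\}$ and $0\le j\le \ell$. Using the semigroup property $A^{\alpha}e^{-tA}=\bigl(A^{\alpha}e^{-(t/2)A}\bigr)\circ e^{-(t/2)A}$ together with the functional-calculus bound $\|A^{\alpha}e^{-sA}\|_{L^2\to L^2}\le C_{\alpha}s^{-\alpha}$ and the assumed $L^q$--$L^2$ smoothing, one obtains
\[
\|A^{\alpha}e^{-tA}f\|_{L^2(\Omega)}\le C\,t^{-\alpha-\frac{N}{2}(\frac{1}{q}-\frac{1}{2})}\|f\|_{L^q(\Omega)}.
\]
Applied to each term in $v_\ell(t)$, the factor $t^{j}$ exactly cancels the $t^{-j}$ coming from $A^{\ell+j}e^{-tA}$, so
\[
\|v_\ell(t)\|_{L^2(\Omega)}^2\le C\,t^{-2\ell-N(\frac{1}{q}-\frac{1}{2})}\|(u_0,u_1)\|_{\mathbf{L}^q(\Omega)}^2,
\]
and analogous bounds, with one additional factor of $t^{-1}$, control $\|A^{1/2}v_\ell(t)\|_{L^2(\Omega)}^2$ and $\|v_\ell'(t)\|_{L^2(\Omega)}^2$ (the latter because a time derivative either brings down an extra $A$ or lowers the polynomial degree by one, saving a factor $t^{-1}$ in both cases).

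Next, I would fix an integer $k\ge 0$ with $2k\ge N(\tfrac{1}{q}-\tfrac{1}{2})$ and use the telescoping identity
\[
u(t)-V_m(t)=\bigl(u(t)-V_{m+k}(t)\bigr)+\sum_{\ell=m}^{m+k-1}v_\ell(t).
\]
For the first summand, Theorem \ref{intro:thm:abst-expansion} applied with $n=m+k$ yields
\[
\|u(t)-V_{m+k}(t)\|_{L^2(\Omega)}^2+tE_{\Omega}(u-V_{m+k};t)\le C\,t^{-2(m+k)}\|(u_0,u_1)\|_{\mathbf{E}(\Omega)}^2,
\]
and for $t\ge 1$ the choice of $k$ ensures $t^{-2(m+k)}\le t^{-2m-N(\frac{1}{q}-\frac{1}{2})}$. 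For the finite sum, each index satisfies $\ell\ge m$, so the smoothing bounds from the previous paragraph give, for $t\ge 1$,
\[
\|v_\ell(t)\|_{L^2(\Omega)}^2+tE_{\Omega}(v_\ell;t)\le C\,t^{-2m-N(\frac{1}{q}-\frac{1}{2})}\|(u_0,u_1)\|_{\mathbf{L}^q(\Omega)}^2.
\]
Combining both contributions via $\|a+b\|^2\le 2\|a\|^2+2\|b\|^2$ and the analogous inequality for $E_{\Omega}$ produces the desired estimate with a constant $C_{2,q,m}$ depending only on $N$, $q$, $m$ and the constant in the $L^q$--$L^2$ smoothing bound.

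The most delicate part of the plan is the bookkeeping for $\|A^{1/2}v_\ell\|_{L^2}$ and $\|v_\ell'\|_{L^2}$: after differentiating the explicit polynomial-in-$t$ combination that defines $v_\ell$, one must verify that every resulting monomial absorbs exactly the right powers of $t$ through the functional calculus, so that the uniform rate $t^{-2\ell-N(\frac{1}{q}-\frac{1}{2})-1}$ is preserved across the finitely many terms. All other ingredients, namely the $L^p$--$L^q$ mapping property of $\{e^{t\Delta_{\Omega}}\}_{t\ge 0}$ (assumed in the statement) and Theorem \ref{intro:thm:abst-expansion} itself, are already available, so once the smoothing bookkeeping is done the proof is a routine triangle-inequality assembly.
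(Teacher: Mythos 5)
Your proposal is correct and follows essentially the same route as the paper's own proof (given as Proposition \ref{prop:dw-whole-diffusionphenomena} in Section \ref{sec:damped}): telescope $u-V_m$ through $V_{m+k}$ with $k$ chosen so that $2k\geq N(\tfrac1q-\tfrac12)$, control the remainder by Theorem \ref{intro:thm:abst-expansion}, and bound each intermediate profile $v_\ell$ by splitting the heat semigroup and using the $L^q$--$L^2$ smoothing estimate, which is exactly the paper's argument with $m_0$ in place of your $k$. Your bookkeeping for $\|A^{1/2}v_\ell\|$ and $\|\partial_t v_\ell\|$ (each gaining one extra factor of $t^{-1}$ after squaring) also matches the paper's bound $\|v_\ell(t)\|^2+tE_\Omega(v_\ell;t)\leq Ct^{-2\ell}\bigl(\|e^{\frac t2\Delta_\Omega}v_{00}\|^2+\|e^{\frac t2\Delta_\Omega}u_0\|^2\bigr)$.
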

\begin{remark}
Ono \cite{Ono2003} gives the estimates 
\[
\|u(t)\|_{L^2(\Omega)}^2+
(1+t)
E_{\Omega}(u;t)
\leq 
\begin{cases}
C(1+t)^{-\frac{N}{2}}
&\text{if}\ N\geq 3, 
\\
C_\delta(1+t)^{-1+\delta}
&\text{if}\ N=2
\end{cases}
\]
(note that $C_\delta\to \infty$ as $\delta\to 0$) 
under the assumption $(u_0,u_1) \in \mathbf{E}(\Omega)\cap \mathbf{L}^1(\Omega)$, 
which could be understood as the one of generalization for the Matsumura estimates.  
For the two-dimensional exterior problem, 
Saeki--Ikehata \cite{SaekiIkehata2000} found 
the decay estimate $E_{\Omega}(u;t)\leq C(1+t)^{-2}$
also for non-compactly supported initial data
in $\mathbf{E}(\Omega)$ with 
the additional condition $d_\Omega(u_0+u_1)\in L^2(\Omega)$
with $d_\Omega(x)=|x|\log(\frac{2|x|}{r_\Omega})$ and $r_\Omega=\inf\{|x|\;;\;x\in \pa\Omega\}$.
\end{remark}

The case of two-dimensional exterior domains is something exceptional.
Actually, in Grigor'yan and Saloff-Coste \cite{GS2002}, 
it is observed that 
Dirichlet heat semigroups $\{e^{t\Delta_\Omega}\}_{t\geq 0}$ in two-dimensional exterior domains
satisfy a peculiar behavior which originates from 
the recurrence property of the Brownian motion in $\R^2$.  This can be translated into 
the following inequality for $\{e^{t\Delta_{\Omega}}\}_{\geq 0}$:
\begin{equation}\label{intro:GS2002-ineq}
\|e^{t\Delta_\Omega}f\|_{L^2(\Omega)}
\leq Ct^{-\frac{1}{2}}\big(1+\log(1+t)\big)^{-1}\left\|
\left(1+\log\frac{|x|}{r_\Omega}\right) f\right\|_{L^1(\Omega)}, \quad t>0
\end{equation}
(for its alternative proof via the comparison principle, 
see also Appendix in Ikeda--Sobajima--Taniguchi--Wakasugi \cite{ISTWpre}).
If we use \eqref{intro:GS2002-ineq} instead of the usual $L^p$-$L^q$ estimates, 
then the following assertion can be found out.

\begin{corollary}\label{intro:cor:decayest-2d}
Let $\Omega$ be a two-dimensional exterior domain. 
Then for every $m\in \Z_{\geq 0}$, 
there exists a positive constant $C_{3,m}$ such that 
the following assertion holds:
If $(u_0,u_1)\in \mathbf{E}(\Omega)$ with 
\[
M=\left\|\Big(1+\log\frac{|x|}{r_\Omega}\Big)u_0
\right\|_{L^1(\Omega)}+
\left\|\Big(1+\log\frac{|x|}{r_\Omega}\Big)u_1\right\|_{L^1(\Omega)}<\infty,
\] 
then the corresponding weak solution $u$ of \eqref{intro:ext-dw}
satisfies 
\begin{gather*}
\left\|u(t)-V_m(t)
\right\|_{L^2(\Omega)}^2+
tE_{\Omega}\left(u-V_m;t\right)
\leq C_{3,m}
t^{-1-2m}(\log t)^{-2}
\big(\|(u_0,u_1)\|_{\mathbf{E}(\Omega)}^2+M^2\big),\quad t\geq 2
\end{gather*}
where $\{V_m(\cdot)\}_{m\in\Z_{\geq0}}$ is given in Definition \ref{def:asymptotics} 
with $A=-\Delta_{\Omega}$ in $L^2(\Omega)$. 
\end{corollary}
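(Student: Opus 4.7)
The plan is to push the expansion one order further and absorb the resulting gain into the logarithm coming from \eqref{intro:GS2002-ineq}. Since $V_{m+1}(t)=V_m(t)+v_m(t)$ by Definition \ref{def:asymptotics}, I write $u-V_m=(u-V_{m+1})+v_m$ and estimate the two pieces separately. Theorem \ref{intro:thm:abst-expansion} applied to $A=-\Delta_\Omega$ with $n=m+1$ bounds the first piece by $C\,t^{-2m-2}\|(u_0,u_1)\|_{\mathbf{E}(\Omega)}^2$ in the norm $\|\cdot\|_{L^2}^2+tE_\Omega(\cdot;t)$; since $(\log t)^2/t$ is bounded on $[2,\infty)$, this already sits inside the target rate $t^{-1-2m}(\log t)^{-2}$. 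So the whole matter reduces to a direct pointwise-in-$t$ estimate of the correction term $v_m(t)$ (both in $L^2$ and in energy), and it is there that the $(\log t)^{-2}$ gain must come from.

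For that direct estimate, I inspect Definition \ref{def:asymptotics}: every summand of $v_m(t)$ has the form $c\cdot t^j\,A^{m+j}e^{-tA}f$ with $f\in\{u_0,\,u_0+u_1\}$ and $0\le j\le m$. The key trick is to split $e^{-tA}=e^{-(t/2)A}e^{-(t/2)A}$ and combine two ingredients: the analytic smoothing $\|A^\beta e^{-(t/2)A}\|_{L^2\to L^2}\le C_\beta\,t^{-\beta}$ valid for every $\beta\ge0$, and the Grigor'yan--Saloff-Coste inequality \eqref{intro:GS2002-ineq} applied to $e^{-(t/2)A}f$. Composition yields $\|t^jA^{m+j}e^{-tA}f\|_{L^2}\le C\,t^{-m-\frac{1}{2}}(\log t)^{-1}M$ for each summand, hence $\|v_m(t)\|_{L^2}^2\le C\,t^{-2m-1}(\log t)^{-2}M^2$ for $t\ge2$. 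The same device handles $A^{1/2}v_m(t)$ and, via the product rule, $\pa_tv_m(t)$: each extra half-derivative in space or one time derivative is paid for by one more application of smoothing, producing an additional factor $t^{-1/2}$ or $t^{-1}$, and one arrives at $t\,E_\Omega(v_m;t)\le C\,t^{-2m-1}(\log t)^{-2}M^2$.

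The two estimates are combined via the inequality $\|u-V_m\|_{L^2}^2+tE_\Omega(u-V_m;t)\le 2\bigl(\|u-V_{m+1}\|_{L^2}^2+tE_\Omega(u-V_{m+1};t)\bigr)+2\bigl(\|v_m\|_{L^2}^2+tE_\Omega(v_m;t)\bigr)$, which produces the stated bound. The main technical point I anticipate is the bookkeeping when one differentiates $v_m$ in $t$: the product rule produces $jt^{j-1}A^{m+j}e^{-tA}f$ and $-t^jA^{m+j+1}e^{-tA}f$, and one has to check that both fit the smoothing-plus-\eqref{intro:GS2002-ineq} framework (the $j=0$ case kills the first piece, and the second merely costs one extra power of $A$ absorbed by analyticity). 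Once that bookkeeping is settled, the rest is a routine combination of the two-dimensional logarithmic heat kernel bound with the abstract expansion of Theorem \ref{intro:thm:abst-expansion}.
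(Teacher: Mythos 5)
Your proposal is correct and follows essentially the same route as the paper: the paper also writes $u-V_m=(u-V_{m+1})+v_m$, invokes Theorem \ref{intro:thm:abst-expansion} with $n=m+1$ for the first piece, and bounds $v_m$ by factoring $e^{t\Delta_\Omega}=e^{\frac{t}{2}\Delta_\Omega}e^{\frac{t}{2}\Delta_\Omega}$, absorbing the powers of $-\Delta_\Omega$ and $t$ by analytic smoothing and then applying the Grigor'yan--Saloff-Coste estimate \eqref{intro:GS2002-ineq} to $e^{\frac{t}{2}\Delta_\Omega}v_{00}$ and $e^{\frac{t}{2}\Delta_\Omega}u_0$. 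Your bookkeeping for $\partial_t v_m$ and $A^{1/2}v_m$ is consistent with what the paper leaves implicit.
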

\begin{remark}
The case $m=0$ can be found in Ikeda--Sobajima--Taniguchi--Wakasugi \cite{ISTWpre}.
Corollary \ref{intro:cor:decayest-2d} also provides 
a slight improvement in the estimates for $m\geq 1$.   
\end{remark}

We also provides the optimality of these estimates in the following sense. 
The essential tool for the proof 
is the test function method with positive harmonic functions satisfying 
Dirichlet boundary condition 
(similar treatment can be found in Sobajima \cite{Sobajima_Nash} for 
one-dimensional Schr\"odinger semigroups). 

\begin{proposition}\label{intro:prop:sharpness1}
Let $u$ be the weak solution of \eqref{intro:ext-dw} 
with $(u_0,u_1)\in C_0^\infty(\Omega)\times C_0^\infty(\Omega)$
satisfying $u_0\geq 0, u_1\geq 0$ and $u_0+u_1\not\equiv 0$.
Then the following assertions hold: 
\begin{itemize}
\item[\bf (i)]
If $\Omega=\R^N$ $(N\in\N)$ or 
$\Omega$ is an exterior domain in $\R^N$ $(N\geq 3)$, 
then 
the following quantities are both finite and 
\rev{%
strictly
}
positive:
\[
\limsup_{t\to\infty} \Big(t^{\frac{N}{2}}\|u(t)\|_{L^2(\Omega)}^2\Big), 
\quad 
\limsup_{t\to\infty} \Big(t^{\frac{N}{2}+1}E_{\Omega}(u;t)\Big).
\]
\item[\bf (ii)]If 
$\Omega$ is a two-dimensional exterior domain, 
then the following quantities are both finite and 
\rev{%
strictly
}
positive:
\[
\limsup_{t\to\infty} \Big(t(\log t)^2\|u(t)\|_{L^2(\Omega)}^2\Big), 
\quad 
\limsup_{t\to\infty} \Big(t^2(\log t)^2E_{\Omega}(u;t)\Big).
\]
\end{itemize}
\end{proposition}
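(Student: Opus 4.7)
The plan is to split each limsup statement into its two halves, finiteness (upper bound) and strict positivity (lower bound), and treat them separately. The finiteness is a direct consequence of the corollaries above: in case (i) I apply Corollary \ref{intro:cor:decayest} with $m=0$ and $q=1$, and in case (ii) Corollary \ref{intro:cor:decayest-2d} with $m=0$. The smooth compactly supported data $(u_0,u_1)$ automatically lie in $\mathbf{L}^1(\Omega)$ and satisfy the logarithmic weighted $L^1$ bound, so
\[
\|u(t)\|_{L^2(\Omega)}^2+tE_\Omega(u;t)\leq
\begin{cases} Ct^{-N/2} & \text{in case (i)}, \\ Ct^{-1}(\log t)^{-2} & \text{in case (ii)}, \end{cases}
\]
for $t$ large, bounding both limsups from above.

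For the strict positivity, the core idea is the test function method with a positive $\Delta_\Omega$-harmonic function $\phi$ satisfying the Dirichlet condition on $\partial\Omega$: $\phi\equiv 1$ if $\Omega=\R^N$; $\phi$ the bounded capacitary potential with $\phi\to 1$ at infinity for an exterior domain with $N\geq 3$; and $\phi$ the unbounded harmonic function with $\phi(x)\sim\log(|x|/r_\Omega)$ at infinity in the two-dimensional exterior case. Finite propagation speed of \eqref{intro:ext-dw} together with the compact support of $(u_0,u_1)$ keeps $\supp u(\cdot,t)\subset \Omega\cap B(0,R_0+t)$, legitimizing every integration by parts below. Setting $M(t)=\int_\Omega u(t,x)\phi(x)\,dx$ and exploiting $u|_{\partial\Omega}=\phi|_{\partial\Omega}=0$ and $\Delta\phi=0$, a short computation gives the ODE $M''(t)+M'(t)=0$, hence
\[
M(t)=M(0)+M'(0)(1-e^{-t}) \longrightarrow \int_\Omega (u_0+u_1)\phi\,dx =: M_\infty, \qquad t\to\infty,
\]
and the sign/nontriviality of $(u_0,u_1)$ together with $\phi>0$ in $\Omega$ forces $M_\infty>0$.

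The next step is to upgrade $M_\infty>0$ to a matching lower bound on the leading profile $v_0(t)=e^{t\Delta_\Omega}(u_0+u_1)$, because Corollary \ref{intro:cor:decayest} (resp.\ \ref{intro:cor:decayest-2d}) with $m=1$ controls $\|u(t)-v_0(t)\|_{L^2}^2+tE_\Omega(u-v_0;t)$ by $Ct^{-N/2-2}$ (resp.\ $Ct^{-3}(\log t)^{-2}$), which is two powers of $t$ smaller than the targeted rate. Writing $\|v_0(t)\|_{L^2}^2=\lr{e^{2t\Delta_\Omega}(u_0+u_1),u_0+u_1}$ and inserting the large-time two-sided heat-kernel asymptotic $p_\Omega(t,x,y)\sim c\phi(x)\phi(y)t^{-N/2}$ in case (i), resp.\ $p_\Omega(t,x,y)\sim c\phi(x)\phi(y)t^{-1}(\log t)^{-2}$ in case (ii), uniformly on the fixed compact set $\supp(u_0+u_1)\times\supp(u_0+u_1)$, yields the sought lower bound on $\|v_0(t)\|_{L^2}^2$, and a triangle inequality transfers it to $\|u(t)\|_{L^2}^2$. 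For the energy, I would combine the same remainder estimate with the dissipation identity $\frac{d}{dt}\|v_0(t)\|_{L^2}^2=-2\|\nabla v_0(t)\|_{L^2}^2$: integrating over $[T,2T]$ and using the sharp upper and lower bounds on $\|v_0\|_{L^2}^2$ just obtained produces a sequence $s_k\to\infty$ with $\|\nabla v_0(s_k)\|_{L^2}^2$ of the correct order, from which a final triangle inequality delivers $\limsup_t t^{N/2+1} E_\Omega(u;t)>0$ (respectively the $\log$-corrected analogue in case (ii)).

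The main obstacle is the two-sided heat-kernel asymptotic, especially in case (ii), where the $(\log t)^{-2}$ correction originates from the recurrence of planar Brownian motion and cannot be read off the standard Gaussian bounds; here I would invoke the sharp estimates of Grigor'yan--Saloff-Coste \cite{GS2002} (and the more quantitative variant developed in \cite{ISTWpre}). Everything else — the conserved-quantity ODE, the triangle inequalities, and the integral form of the dissipation identity — is routine given the Corollaries.
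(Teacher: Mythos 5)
Your upper bounds are exactly the paper's (Corollaries \ref{intro:cor:decayest} and \ref{intro:cor:decayest-2d} with $m=0$), and your reduction of the lower bound to the leading profile $v_0(t)=e^{t\Delta_\Omega}(u_0+u_1)$ via the $m=1$ estimates is also how the paper proceeds. Where you genuinely diverge is in how the lower bound for $v_0$ is produced. You invoke two-sided large-time heat-kernel asymptotics $p_\Omega(t,x,y)\sim c\,\phi(x)\phi(y)t^{-N/2}$ (resp.\ $c\,\phi(x)\phi(y)t^{-1}(\log t)^{-2}$) on compact sets; the paper deliberately avoids any lower heat-kernel bound. Instead it multiplies the heat equation for $v_0$ by a space-time cutoff $\zeta_\rho$ (times the harmonic function $h_\Omega$ in the exterior case), integrates in $x$ and $t$, averages in $\rho$ with the measure $d\rho/\rho$ to produce a functional $Y(T)$ that must grow like $\log T$ (like $\log\log T$ in 2D), and then converts that divergence into $\limsup_t t^{\frac{N}{2}+1}\|\nabla v_0(t)\|^2>0$ through the Nash/Gagliardo--Nirenberg inequality of Lemma \ref{lem:Nash-GN} (the weighted Nash inequality of Lemma \ref{lem:logNash} in 2D). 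The paper's route needs only the upper bounds already quoted plus elementary inequalities, which is why the introduction advertises it as ``the test function method with positive harmonic functions''; your route is shorter once the kernel asymptotics are granted, but it imports a substantially deeper input. Two further remarks on your write-up: the ODE $M''+M'=0$ for $M(t)=\int_\Omega u\,\phi\,dx$ is correct (and is a nice observation) but redundant here, since $\int_\Omega(u_0+u_1)\phi\,dx>0$ already follows from $u_0,u_1\ge0$, $u_0+u_1\not\equiv0$ and $\phi>0$; and your dissipation step, integrating $\frac{d}{dt}\|v_0\|_{L^2}^2=-2\|\nabla v_0\|_{L^2}^2$ over $[T,2T]$, genuinely requires an exact asymptotic $\|v_0(t)\|_{L^2}^2\sim c\,t^{-N/2}$ --- if \cite{GS2002} only supplies two-sided bounds with non-matching constants $c_1\le c_2$, the difference $\|v_0(T)\|^2-\|v_0(2T)\|^2$ could be negative, and you must integrate over $[T,KT]$ with $K$ large enough that $c_1>c_2K^{-N/2}$. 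With that adjustment (or with the genuine asymptotics, which are available for exterior domains) your argument closes.
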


\begin{remark}
We will prove the same conclusion with slightly generalized initial data in Section \ref{sec:damped}.
\end{remark}
Another application of Theorem \ref{intro:thm:abst-expansion} 
is the estimation for 
a localized version of 
the energy functional 
\[
E_{\Omega,R}(u;t)=\int_{\Omega\cap B(0,R)}\Big(|\pa_tu(x,t)|^2+|\nabla u(x,t)|^2\Big)\,dx, \quad t\geq 0
\]
with $R>R_{\Omega}=\sup\{|x|\;;\;x\in \pa \Omega\}$ (or $R_{\R^N}=0$), 
where $B(y,r)$ denotes the $N$-dimensional ball 
centred at $y\in \R^N$ with radius $r$. This is so-called ``local energy''. 
The consequence is as follows:

\begin{proposition}\label{intro:prop;local}
Let $\Omega$ be an exterior domain $(N\geq 2)$. 
Let $u$ be the weak solution of \eqref{intro:ext-dw}
with $(u_0,u_1)\in C_0^\infty(\Omega)\times C_0^\infty(\Omega)$.
Then there exists a positive constant $C_{4}$
such that 
$u$ satisfies for every $R>R_\Omega$, 
\[
E_{\Omega,R}(u;t)\leq 
\begin{cases}
C_{4} R^{N-2}
t^{-N}
& \text{if}\ N\geq 3, 
\\
C_4 \left(1+\log \frac{R}{r_\Omega}\right)^2
t^{-2}(\log t)^{-4}
& \text{if}\ N=2
\end{cases}
\]
for every $t\geq 2$, where $R_\Omega=\sup\{|x|\;;\;x\in \pa \Omega\}$
and $r_{\Omega}=\inf\{|x|\;;\;x\in \pa \Omega\}$.
\end{proposition}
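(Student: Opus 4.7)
My plan is to decompose $u = V_m + (u-V_m)$ for an integer $m = m(N)$ to be chosen sufficiently large depending on the dimension, and estimate the two pieces separately on the ball $B(0,R)$. The remainder will be controlled globally via Corollary~\ref{intro:cor:decayest} (resp.~Corollary~\ref{intro:cor:decayest-2d}), while the profile $V_m$, being an explicit functional of the Dirichlet heat semigroup applied to compactly supported data, will be controlled pointwise on $B(0,R)$ by heat kernel estimates.

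First, by the triangle inequality I would write
\[
E_{\Omega,R}(u;t)^{1/2}\leq E_{\Omega,R}(V_m;t)^{1/2}+E_{\Omega,R}(u-V_m;t)^{1/2}\leq E_{\Omega,R}(V_m;t)^{1/2}+E_{\Omega}(u-V_m;t)^{1/2}.
\]
Since $(u_0,u_1)\in C_0^\infty(\Omega)\times C_0^\infty(\Omega)\subset \mathbf{L}^1(\Omega)$, Corollary~\ref{intro:cor:decayest} gives $E_\Omega(u-V_m;t)\leq C\,t^{-N/2-1-2m}$ for $N\geq 3$; for $N=2$, the weighted $L^1$ norm $M$ is finite so Corollary~\ref{intro:cor:decayest-2d} yields $E_\Omega(u-V_m;t)\leq C\,t^{-1-2m}(\log t)^{-2}$. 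By taking $m$ large enough (e.g., $m\geq (N-2)/4$ for $N\geq 3$, and $m\geq 1$ for $N=2$), this global remainder becomes smaller than the target local rate.

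The heart of the argument is thus the estimate for $E_{\Omega,R}(V_m;t)$. Recall from Definition~\ref{def:asymptotics} that $V_m(t)$ is a finite linear combination of terms of the form $A^{\ell+j}(-t)^j e^{-tA}g/j!$ with $g\in\{u_0,u_0+u_1\}$. For compactly supported $g$, one has large-time pointwise heat-kernel estimates for $\{e^{-tA}\}_{t\ge0}=\{e^{t\Delta_\Omega}\}_{t\ge0}$ on bounded subsets of $\Omega$: for $N\ge 3$, Gaussian-type bounds involving the positive harmonic function $h_\Omega$ (vanishing on $\pa\Omega$ and tending to $1$ at infinity) give $|\nabla^\alpha A^\ell e^{-tA}g(x)|\lesssim t^{-N/2-\ell-|\alpha|/2}$ for $x\in B(0,R)\cap\Omega$ and $t\ge 2$; for $N=2$, the Grigor'yan--Saloff-Coste estimate \eqref{intro:GS2002-ineq} and its consequences for derivatives produce the additional factor $(\log t)^{-2}$. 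Applying $\pa_t$ where needed and integrating over $B(0,R)\cap\Omega$, one sums the contributions over $\ell\le m-1$ to obtain the desired local energy estimate.

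The main obstacle will be to extract the sharp $R$-dependence: the factor $R^{N-2}$ for $N\ge 3$ and $(1+\log(R/r_\Omega))^2$ for $N=2$ (rather than a naive volume factor $R^N$ or $R^2$) reflects the Dirichlet boundary behavior of the profile, since $V_m(t)$ is asymptotically dictated by harmonic functions vanishing on $\pa\Omega$. Concretely, the $R$-dependence is to be recovered by incorporating the positive harmonic function $h_\Omega$ (with $h_\Omega(x)\sim 1$ and $|\nabla h_\Omega(x)|\lesssim |x|^{1-N}$ near infinity for $N\ge 3$, and its logarithmic analog in $N=2$) into the heat kernel bounds before integrating over $B(0,R)\cap\Omega$, and this is the step that requires the most care. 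Once this is in place, combining with the remainder estimate from the previous step completes the proof.
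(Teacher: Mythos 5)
Your top-level decomposition ($u=V_m+(u-V_m)$, remainder via the global expansion theorem, profiles estimated locally) is exactly the paper's strategy, but the step you yourself flag as ``requiring the most care'' is where the proof actually lives, and the route you sketch for it does not work. You propose to bound $E_{\Omega,R}(v_\ell;t)$ by pointwise Gaussian-type estimates $|\nabla^\alpha A^\ell e^{-tA}g(x)|\lesssim t^{-N/2-\ell-|\alpha|/2}$ for the Dirichlet heat semigroup. The paper explicitly points out (citing Jimbo--Sakaguchi and Georgiev--Taniguchi) that the \emph{gradient} of the Dirichlet heat kernel in an exterior domain does not enjoy good decay in general, so the $|\alpha|=1$ bound you need is not available. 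Moreover, even granting such pointwise bounds, integrating over $B(0,R)\cap\Omega$ yields a volume factor $R^{N}$, and weighting by $h_\Omega$ cannot repair this for $N\geq 3$: there $h_\Omega\to 1$ at infinity and is bounded above and below by positive constants away from $\pa\Omega$, so it cannot convert $R^N$ into $R^{N-2}$. In the paper, $h_\Omega$ enters only the \emph{lower}-bound (optimality) arguments; it plays no role in the upper bound for $N\geq3$.

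The device you are missing is an integration by parts against a cutoff $\zeta_R$ with $\mathbbm{1}_{B(0,R)}\leq\zeta_R\leq\mathbbm{1}_{B(0,2R)}$, which eliminates first-order derivatives of the profile entirely:
\[
\int_{\Omega}|\nabla v_\ell|^2\zeta_R\,dx
=\frac{1}{2}\int_{\Omega}v_\ell^2\,\Delta\zeta_R\,dx-\int_{\Omega}v_\ell\,\Delta v_\ell\,\zeta_R\,dx .
\]
The sharp $R$-dependence then comes from $\|\Delta\zeta_R\|_{L^1}\sim R^{N-2}$ and $\|\zeta_R\|_{L^{N/(N-2)}}\sim R^{N-2}$, while the time decay comes from $L^q$--$L^\infty$ and $L^q$--$L^N$ smoothing of $e^{t\Delta_\Omega}$ together with analyticity (to absorb the powers of $\Delta$), none of which requires kernel gradient bounds; the $\pa_t v_\ell$ term is handled by H\"older with $\|\mathbbm{1}_{B(0,R)}\|_{L^{N/(N-2)}}$. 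For $N=2$ the same identity is used with the weights $H_\Omega^{\pm 1}$, $H_\Omega=1+h_\Omega\sim\log|x|$, and the Grigor'yan--Saloff-Coste estimates of Lemma \ref{lem:2dexteriorheat}; this is what produces both the $(1+\log\frac{R}{r_\Omega})^2$ factor (from $\|H_\Omega^2\zeta_R\|_{L^\infty}$) and the extra $(\log t)^{-4}$. Your remainder estimate and the choice of $m$ are fine, but without this integration-by-parts mechanism the claimed bounds, in particular the $R^{N-2}$ factor and the rate $t^{-N}$ rather than $R^N t^{-N-1}$, are not reached.
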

\begin{remark}
In Shibata \cite{Shibata1983} (for $N\geq 3$) 
and Dan--Shibata \cite{DaSh1995} (for $N\geq 2$),  
the same estimate has been derived without logarithmic 
\rev{%
factor.
}
More precisely, the following estimates were shown:
If $(u_0,u_1)\in \mathbf{E}(\Omega)$ satisfies $\supp u_0 \cup \supp u_1\subset B(0,R)$, then 
for every $t\geq 1$, 
\[
E_{\Omega,R}(u;t)\leq C_Rt^{-N}\|(u_0,u_1)\|_{\mathbf{E}(\Omega)}^2.
\]
The method in \cite{DaSh1995} (and also \cite{Shibata1983}) 
is the asymptotic analysis of resolvent operators 
related to the damped wave equation. 
Proposition \ref{intro:prop;local} 
for the case $N\geq 3$ 
may be regarded as 
an alternative proof of their result. 
A novelty here is the appearance of the logarithmic factor $(\log t)^{-4}$ in the estimate for the two-dimensional case, 
which is slightly 
faster than the decay rate of the energy functional $E_{\Omega}(u;\cdot)$ in  Corollary \ref{intro:cor:decayest-2d}. 
But we do not know whether its decay rate is optimal or not. 
\end{remark}

Here we briefly explain the crucial idea in the present paper. 
The derivation of asymptotic profiles has been provided in \cite{Sobajima_MathAnn}.
However, if we directly apply the decomposition in \cite{Sobajima_MathAnn}, 
then the initial data are required to be regular enough.
Therefore we further introduce a procedure for the regularization. 
Here we use a pair of reasonable operators $I_n,J_n:H\to D(A^n)$ satisfying $1_H=I_n+A^{n}J_n$, 
where $1_H$ denotes the identity map from $H$ to itself (see Lemma \ref{lem:unit-decomposition}). 	 
This gives the decomposition $u=u_*+A^n u_{**}$ with the respective initial data 
\[
(u_*,u_*')(0)=(I_nu_0,I_nu_1), 
\quad 
(u_{**},u_{**}')(0)=(J_nu_0,J_nu_1).
\]
This expression is reasonable because $u_*$ and $u_{**}$ are regular enough ($u_*, u_{**}\in D(A^{n+\frac{1}{2}})$)
and $A^{n}u_{**}$ can be expected to have an extra decay property comes from the factor $A^n$.
At this moment, we can apply the procedure in \cite{Sobajima_MathAnn} to 
$u_*$. 
The rest is a straightforward computation based on the energy method.

For the optimality of decay rates for energy functionals, 
we use the test function method 
with 
positive harmonic functions 
satisfying the Dirichlet boundary condition
for {\it linear} equations 
together with the Gagliardo--Nirenberg and Nash inequalities. 
In the case of two-dimensional exterior domain, 
we further need a Nash-type inequality involving a logarithmic weight function 
found in \cite{ISTWpre}.

The present paper is organized as follows. 
At the beginning of Section \ref{sec:abst}, 
we start with the basic facts 
in the analysis of the second-order evolution equation \eqref{intro:abst-dw} 
from the view point of the auxiliary quantity
\begin{equation}\label{intro:aux}
\|u(t)\|_{\mathcal{E}^\sharp}^2
=
\|A^{1/2}u(t)\|^2+\left\|u'(t)+\frac{1}{2}u(t)\right\|^2+\frac{1}{4}\|u(t)\|^2
\end{equation}
(a similar explanation can be found in Sobajima \cite{Sobajima_Effect}). 
In the middle of Section \ref{sec:abst}, 
we would give an alternative proof of 
asymptotic expansion for regular initial data stated in \cite{Sobajima_MathAnn}
via the use of the auxiliary quantity $\|u(t)\|_{\mathcal{E}^\sharp}^2$. 
Then at the end of Section \ref{sec:abst}, 
we shall give a proof of Theorem \ref{intro:thm:abst-expansion} 
via the regularization with $I_n$ and $J_n$ explained above.

As applications of Theorem \ref{intro:thm:abst-expansion}, 
we discuss the profile of weak solutions to the exterior problem of 
the damped wave equation \eqref{intro:ext-dw} in Section \ref{sec:damped}. 
First we prove the decay estimates for $L^2$-norm  and the energy $E_{\Omega}(u;\cdot)$. 
We also show the optimality of their decay rates 
via the test function method with positive harmonic functions satisfying the Dirichlet boundary condition. 
Finally, we discuss decay estimates of the local energy 
$E_{\Omega,R}(u;t)$ for weak solutions to \eqref{intro:ext-dw}.

\section{Analysis for the abstract evolution equation}\label{sec:abst}

Here we consider the initial-value problem of the second-order 
evolution equation 
in a real (or even complex) Hilbert space $H$ 
(with the inner product $\lr{\cdot,\cdot}$
and the norm $\|\cdot\|$), 
which is of the form
\begin{align}\label{eq:abst-dw}
\begin{cases}
u''(t)+Au(t)+u'(t)=0, \quad t\in\R_+, 
\\
(u,u')(0)=(u_0,u_1)\in D(A^{1/2})\times H.
\end{cases}
\end{align}
Here $A$ is a nonnegative selfadjoint operator in $H$ 
endowed with domain $D(A)$. 
We discuss the asymptotic expansion of weak solutions to \eqref{eq:abst-dw}. 
\subsection{Basic facts with a derivation}
As usual, existence and uniqueness of weak solutions to \eqref{eq:abst-dw}
is obtained by the Hille--Yosida theorem. 
Here we use the following space and operator; 
note that the choice of the norm for the product space is 
slightly different from the usual one for but equivalent to it.

\begin{definition}
\begin{itemize}
\item[\bf (i)] 
Define $\mathcal{H}=D(A^{1/2})\times H$ as a Hilbert space with the inner product 
\[
\bigLR{(f_1,g_1),(f_2,g_2)}_\mathcal{H}=
\lr{A^{1/2}f_1,A^{1/2}f_2}+
\frac{1}{4}\lr{f_1,f_2}+
\BigLR{g_1+\frac{1}{2}f_1,g_2+\frac{1}{2}f_2}
\]
for $(f_1,g_1),(f_2,g_2)\in\mathcal{H}$; note that the norm $\|\cdot\|_{\mathcal{H}}$ 
is equivalent to the usual norm:
\[
\|A^{1/2}f\|^2+\frac{1}{6}\|f\|^2+\frac{1}{4}\|g\|^2
\leq \|(f,g)\|_{\mathcal{H}}^2\leq 
\|A^{1/2}f\|^2+\|f\|^2+\frac{3}{2}\|g\|^2. 
\]
\item[\bf (ii)] 
Define the densely defined operator $\mathcal{L}:D(\mathcal{L})\,(\subset\mathcal{H})\to \mathcal{H}$ as
\begin{align*}
\begin{cases}
D(\mathcal{L})=D(A)\times D(A^{1/2})
\\
\mathcal{L}(f,g)=(g,-Af-g);
\end{cases}
\end{align*}
note that by using $\mathcal{L}$ we can reduce the second-order 
evolution equation \eqref{eq:abst-dw} 
to the first-order equation $\frac{d}{dt}(u,\widetilde{u})=\mathcal{L}(u,\widetilde{u})$ equipped with the initial condition $(u,\widetilde{u})|_{t=0}=(u_0,u_1)$. 
\item[\bf (iii)] 
We also define for $\xi\in C^1([0,\infty);H)\cap C([0,\infty);D(A^{1/2}))$, 
\begin{align*}
\|\xi(t)\|_{\mathcal{E}^\sharp}
&=
\|(\xi(t),\xi'(t))\|_{\mathcal{H}}.
\end{align*}
\end{itemize}
\end{definition}
\begin{remark}
The following identity suggests that 
the inner product $\lr{\cdot,\cdot}_{\mathcal{H}}$ 
might represents a reasonable dissipative structure for the problem \eqref{eq:abst-dw}:
for every $(f,g)\in D(\mathcal{L})$, 
\begin{align*}
\bigLR{\mathcal{L}(f,g),(f,g)}_\mathcal{H}
&=
\bigLR{(g,-Af-g),(f,g)}_\mathcal{H}
\\
&=
\lr{A^{1/2}g,A^{1/2}f}
+\BigLR{-Af-\frac{1}{2}g,g+\frac{1}{2}f}+\frac{1}{4}+\lr{g,f}
\\
&=-\frac{1}{2}\Big(\|A^{1/2}f\|^2+\|g\|^2\Big). 
\end{align*}
\end{remark}\label{rem:accretive}
The solvability of the abstract problem \eqref{eq:abst-dw} 
is verified by the Hille--Yosida theorem.
\begin{proposition}\label{prop:abst:solve}
The operator $-\mathcal{L}$ is $m$-accretive in $\mathcal{H}$,
and therefore,
$\mathcal{L}$ generates a $C_0$-semigroup $\{e^{-t\mathcal{A}}\}_{t\geq0}$
of contractions on $\mathcal{H}$. In particular,  for every $n\in\Z_{\geq 0}$, one has
\[
e^{t\mathcal{L}}(u_0,u_1)
\in 
\bigcap_{k=0}^n C^{n-k}([0,\infty);D(\mathcal{L}^k))
\]
provided if $(u_0,u_1)\in D(\mathcal{L}^n)$.
\end{proposition}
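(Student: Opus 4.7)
My plan is to verify the Lumer--Phillips hypotheses for $-\mathcal{L}$ on the Hilbert space $(\mathcal{H},\lr{\cdot,\cdot}_{\mathcal{H}})$: namely, accretivity of $-\mathcal{L}$ and the range condition $\mathrm{Range}(\lambda I - \mathcal{L}) = \mathcal{H}$ for some (hence all) $\lambda > 0$. The regularity assertion will then follow from general $C_0$-semigroup theory.

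\textbf{Step 1 (accretivity).} The identity displayed in Remark \ref{rem:accretive} gives, for every $(f,g) \in D(\mathcal{L})$,
\[
\bigLR{-\mathcal{L}(f,g),(f,g)}_{\mathcal{H}} = \tfrac{1}{2}\bigl(\|A^{1/2}f\|^2 + \|g\|^2\bigr) \geq 0,
\]
so $-\mathcal{L}$ is accretive with respect to the chosen inner product on $\mathcal{H}$. (This is precisely the reason for adjusting the inner product: the cross terms from $\lr{-Af,g}$ and $\lr{A^{1/2}g,A^{1/2}f}$ cancel, and the extra $\tfrac{1}{2}f$ in the second slot kills the remaining non-dissipative contributions.)

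\textbf{Step 2 (range condition).} Given $\lambda>0$ and $(F,G)\in\mathcal{H}$, I look for $(f,g)\in D(\mathcal{L})=D(A)\times D(A^{1/2})$ solving $\lambda(f,g) - \mathcal{L}(f,g) = (F,G)$, i.e.\
\[
\lambda f - g = F, \qquad \lambda g + Af + g = G.
\]
The first equation gives $g = \lambda f - F$; substituting into the second reduces the system to
\[
\bigl(A + (\lambda^2+\lambda)\bigr)f = G + (\lambda+1)F.
\]
Since $A$ is nonnegative and selfadjoint, $A+(\lambda^2+\lambda)$ is boundedly invertible on $H$, so $f := (A+\lambda^2+\lambda)^{-1}\bigl(G+(\lambda+1)F\bigr) \in D(A)$ exists. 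Then $g=\lambda f - F \in D(A^{1/2})$ because $f\in D(A)\subset D(A^{1/2})$ and $F\in D(A^{1/2})$. Thus $(f,g)\in D(\mathcal{L})$ and the range condition holds.

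\textbf{Step 3 (conclusion and regularity).} By the Lumer--Phillips theorem, $\mathcal{L}$ generates a $C_0$-semigroup $\{e^{t\mathcal{L}}\}_{t\geq 0}$ of contractions on $\mathcal{H}$. For the regularity statement, I invoke the classical fact that $D(\mathcal{L}^n)$ is invariant under the semigroup and that, for $U_0\in D(\mathcal{L}^n)$, the map $t\mapsto e^{t\mathcal{L}}U_0$ belongs to $C^{n-k}([0,\infty);D(\mathcal{L}^k))$ for $0\leq k\leq n$, with $\frac{d^{\,j}}{dt^j}e^{t\mathcal{L}}U_0 = e^{t\mathcal{L}}\mathcal{L}^{\,j}U_0$; applied to $U_0=(u_0,u_1)$ this yields the stated inclusion.

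The only non-routine point is Step 2, where one must choose the right substitution so that the algebraic reduction lands on the operator $A+(\lambda^2+\lambda)$; once this is observed, invertibility is immediate from nonnegative selfadjointness of $A$. Everything else is a direct application of the calculation already recorded in Remark \ref{rem:accretive} and standard semigroup theory (see, e.g., Pazy \cite{Pazybook}).
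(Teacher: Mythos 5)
Your proposal is correct and follows essentially the same route the paper takes: the paper reduces the Proposition to the Hille--Yosida/Lumer--Phillips theorem using exactly the dissipativity identity of Remark 2.1, and you simply make explicit the standard range-condition computation (reducing the resolvent equation to the boundedly invertible operator $A+\lambda^2+\lambda$) together with the routine semigroup regularity for data in $D(\mathcal{L}^n)$. No gaps.
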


The following lemma is 
the merit of the use of 
the quantity $\|u(t)\|_{\mathcal{E}^\sharp}^2$.
\begin{lemma}\label{lem:identities}
Let $u$ be the weak solution of \eqref{eq:abst-dw} 
with $(u_0,u_1)\in D(\mathcal{L})$. 
Then one has 
\begin{gather}
\label{eq:en}
\frac{d}{dt}E(u;t)
+
2\|u'(t)\|^2=0, \quad t\geq 0,
\\
\label{eq:en2}
\frac{d}{dt}
\|u(t)\|_{\mathcal{E}^\sharp}^2
+
E(u;t)
=0, \quad t\geq0.
\end{gather}
\end{lemma}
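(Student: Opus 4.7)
The plan is to verify both identities by direct differentiation, which is legitimate under the assumption $(u_0,u_1)\in D(\mathcal{L})$: Proposition \ref{prop:abst:solve} then gives $u\in C^2([0,\infty);H)\cap C^1([0,\infty);D(A^{1/2}))\cap C([0,\infty);D(A))$, so every inner product appearing below is continuously differentiable in $t$ and the pointwise substitution of the equation $u''+Au+u'=0$ is justified.

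For the first identity, I would differentiate $E(u;t)=\|u'(t)\|^2+\|A^{1/2}u(t)\|^2$ termwise. The first term gives $2\lr{u'',u'}$; for the second, since $u(t)\in D(A)$ and $A^{1/2}$ is selfadjoint I rewrite $\|A^{1/2}u\|^2=\lr{Au,u}$ and obtain $2\lr{Au,u'}$. Summing,
\[
\frac{d}{dt}E(u;t)=2\lr{u''+Au,u'}=-2\|u'(t)\|^2
\]
by the equation, which is exactly \eqref{eq:en}.

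For the second identity, I would first expand $\|u'+\tfrac12 u\|^2=\|u'\|^2+\lr{u',u}+\tfrac14\|u\|^2$ and combine with the remaining $\tfrac14\|u\|^2$ term to rewrite
\[
\|u(t)\|_{\mathcal{E}^\sharp}^2=\|A^{1/2}u\|^2+\|u'\|^2+\lr{u',u}+\tfrac12\|u\|^2.
\]
Differentiating and inserting $u''=-Au-u'$ at each occurrence: the first two terms contribute $2\lr{Au,u'}+2\lr{u'',u'}=-2\|u'\|^2$ exactly as above; the cross term gives $\lr{u'',u}+\|u'\|^2=-\lr{Au,u}-\lr{u',u}+\|u'\|^2$; and the last gives $\lr{u',u}$. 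The two $\lr{u',u}$ terms cancel, and what remains is $-\|u'\|^2-\|A^{1/2}u\|^2=-E(u;t)$, proving \eqref{eq:en2}. There is no genuine obstacle here — the lemma is a bookkeeping computation whose only subtlety is making sure each inner product is well defined, which is ensured by the $D(\mathcal{L})$-regularity. (For later use with general energy-space data, one would pass to the limit along a sequence $(u_0^{(k)},u_1^{(k)})\in D(\mathcal{L})$ converging in $\mathcal{H}$, but this is not needed for the statement as given.)
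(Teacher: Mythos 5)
Your proof is correct. The argument for \eqref{eq:en} coincides with the paper's; for \eqref{eq:en2} the paper instead differentiates $\|e^{t\mathcal{L}}(u_0,u_1)\|_{\mathcal{H}}^2$ along the semigroup and invokes the accretivity identity $\bigLR{\mathcal{L}(f,g),(f,g)}_{\mathcal{H}}=-\tfrac12\big(\|A^{1/2}f\|^2+\|g\|^2\big)$ from Remark \ref{rem:accretive}, which is exactly the same algebra as your termwise expansion of $\|u'+\tfrac12 u\|^2$, so the two proofs are essentially identical in substance.
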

\begin{proof}
For the convenience, we also give a proof of the well-known identity \eqref{eq:en}.
By the standard computation with the equation in \eqref{eq:abst-dw}, we have
\begin{align*}
\frac{d}{dt}E(u;t)
&=2\lr{u'(t),u''(t)}+2\lr{A^{1/2}u'(t),A^{1/2}u(t)}
\\
&=2\lr{u'(t),u''(t)+Au(t)}
\\
&=-2\|u'(t)\|^2.
\end{align*}
For the second identity \eqref{eq:en2}, we argue in the following way:
\begin{align*}
\frac{d}{dt}\|u(t)\|_{\mathcal{E}^{\sharp}}^2
=\frac{d}{dt}\|e^{t\mathcal{L}}(u_0,u_1)\|_{\mathcal{H}}^2
=
2\LR{\mathcal{L}e^{t\mathcal{L}}(u_0,u_1),e^{t\mathcal{L}}(u_0,u_1)}_{\mathcal{H}}
=
-E(u;t),
\end{align*}
where we have used the computation in Remark \ref{rem:accretive}.
\end{proof}

Therefore we can observe 
the well-known estimate of $\|u(t)\|^2+(1+t)E(u;t)$ 
for the solution $u$ to \eqref{eq:abst-dw} 
via the identities in Lemma \ref{lem:identities}. 
\begin{lemma}\label{lem:basic}
Let $u$ be the weak solution of \eqref{eq:abst-dw} with $(u_0,u_1)\in\mathcal{H}$. 
Then 
\[
2\|u(t)\|_{\mathcal{E}^\sharp}^2
+
tE(u;t)
+2	\int_0^t\Big(\|A^{1/2}u(s)\|^2+(1+s)\|u'(s)\|^2\Big)\,ds
\leq 
2\|(u_0,u_1)\|_\mathcal{H}^2, \quad t\geq 0.
\]
\end{lemma}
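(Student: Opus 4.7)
The plan is to combine the two identities from Lemma~\ref{lem:identities} and then pass from smooth data to arbitrary data in $\mathcal{H}$ by a density argument.

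First, I would restrict to $(u_0,u_1)\in D(\mathcal{L})$, so that Proposition~\ref{prop:abst:solve} supplies enough regularity to justify all pointwise time-derivative manipulations. Integrating \eqref{eq:en2} on $[0,t]$ produces the base identity
\[
\|u(t)\|_{\mathcal{E}^\sharp}^2+\int_0^t E(u;s)\,ds=\|(u_0,u_1)\|_\mathcal{H}^2.
\]
To generate the weighted term $tE(u;t)$, I would apply \eqref{eq:en} in the differentiated form $\frac{d}{ds}(sE(u;s))=E(u;s)-2s\|u'(s)\|^2$ and integrate on $[0,t]$ to obtain
\[
tE(u;t)+2\int_0^t s\|u'(s)\|^2\,ds=\int_0^t E(u;s)\,ds.
\]

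Adding these two identities and decomposing $E(u;s)=\|A^{1/2}u(s)\|^2+\|u'(s)\|^2$ gives
\[
2\|u(t)\|_{\mathcal{E}^\sharp}^2+tE(u;t)+\int_0^t\bigl(\|A^{1/2}u(s)\|^2+(1+2s)\|u'(s)\|^2\bigr)ds=2\|(u_0,u_1)\|_\mathcal{H}^2,
\]
from which the desired estimate follows by using $1+2s\geq 1+s$ on $[0,t]$ and discarding nonnegative quantities.

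Finally, the extension to arbitrary $(u_0,u_1)\in\mathcal{H}$ is routine. Since $D(\mathcal{L})$ is dense in $\mathcal{H}$ and $\{e^{t\mathcal{L}}\}_{t\geq 0}$ is a contraction semigroup by Proposition~\ref{prop:abst:solve}, both $\|u(\cdot)\|_{\mathcal{E}^\sharp}^2$ and $E(u;\cdot)$ depend continuously on $(u_0,u_1)\in\mathcal{H}$ uniformly on compact time intervals. A Fatou-type argument then controls the time integrals in the limit, and the inequality survives the passage.

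I expect no serious conceptual obstacle here. The only delicate point is the algebraic bookkeeping: combining the two identities with the correct coefficients and verifying that $(1+2s)\geq(1+s)$ on the relevant interval, so that dropping the excess yields an inequality consistent with the claimed direction.
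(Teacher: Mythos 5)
Your strategy --- integrate \eqref{eq:en2}, rewrite \eqref{eq:en} as $\frac{d}{ds}\big(sE(u;s)\big)=E(u;s)-2s\|u'(s)\|^2$ and integrate, add, then pass to general data in $\mathcal{H}$ by density and contractivity of $e^{t\mathcal{L}}$ --- is exactly what the paper intends (it offers no written proof beyond ``via the identities in Lemma \ref{lem:identities}''), and your combined identity
\[
2\|u(t)\|_{\mathcal{E}^\sharp}^2+tE(u;t)+\int_0^t\Big(\|A^{1/2}u(s)\|^2+(1+2s)\|u'(s)\|^2\Big)\,ds=2\|(u_0,u_1)\|_\mathcal{H}^2
\]
is correct, as is the approximation step.

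The gap is in your last sentence. The lemma as printed puts the factor $2$ in front of the \emph{whole} integral, i.e.\ it demands $2\|A^{1/2}u(s)\|^2+(2+2s)\|u'(s)\|^2$ under the integral sign, while your identity supplies only $\|A^{1/2}u(s)\|^2+(1+2s)\|u'(s)\|^2$. The observation $1+2s\geq 1+s$ does not bridge this: you would need to \emph{increase} the coefficient of $\|A^{1/2}u\|^2$ from $1$ to $2$ and of $\|u'\|^2$ from $1+2s$ to $2+2s$, and discarding nonnegative terms only goes the other way. In fact the printed inequality cannot be rescued: differentiating its left-hand side using \eqref{eq:en} and \eqref{eq:en2} gives $-2E(u;t)+E(u;t)-2t\|u'(t)\|^2+2\|A^{1/2}u(t)\|^2+2(1+t)\|u'(t)\|^2=E(u;t)\geq 0$, so that left-hand side is nondecreasing and already equals the right-hand side at $t=0$; the stated ``$\leq$'' fails for any data with $E\not\equiv 0$. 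The overall factor $2$ on the integral is evidently a misprint --- only the coefficient-one version is used later (it is quoted as the base case \eqref{eq:itelation-deri} with $n=0$ in the proof of Lemma \ref{lem:regularity1}) --- and your identity is precisely the correct sharp form. You should state and prove that corrected version (or flag the discrepancy) rather than asserting that the printed inequality ``follows.''
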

Decay estimates for higher derivatives of the weak solution $u$
to \eqref{eq:abst-dw} is proved via an iterative procedure. 
The second lemma is helpful 
to derive a desired estimate for each step in such an iteration.

\begin{lemma}\label{lem:for-itelation}
Let $w$ be the weak solution of $w''+Aw+w'=0$ 
with $(w,w')(0)=(w_0,w_1)\in \mathcal{H}$.
Then for every $k>0$, there exists 
a positive constant $C_{5,k}$ 
depending only on $k$ 
such that 
\begin{align*}
&(1+t)^{k}\Big(\|w(t)\|_{\mathcal{E}^\sharp}^2
+tE(w;t)
\Big)
+\int_0^t (1+s)^{k}\Big(\|A^{1/2}w(s)\|^2+(1+s)\|w'(s)\|^2\Big)\,ds
\\
&\leq
C_{5,k}\left(
\|(w_0,w_1)\|_{\mathcal{H}}^2+\int_0^t (1+s)^{k-1}\|w(s)\|^2\,ds
\right), \quad t\geq 0.
\end{align*}
\end{lemma}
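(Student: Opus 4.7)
The plan is to derive weighted versions of the two identities in Lemma \ref{lem:identities} and close the argument by an induction on $k$. Multiplying the identity $\tfrac{d}{dt}\|w(t)\|_{\mathcal{E}^\sharp}^2 = -E(w;t)$ by $(1+t)^k$ and integrating by parts gives
\[
(1+t)^k \|w(t)\|_{\mathcal{E}^\sharp}^2 + \int_0^t (1+s)^k E(w;s)\,ds = \|(w_0,w_1)\|_{\mathcal{H}}^2 + k\int_0^t (1+s)^{k-1}\|w(s)\|_{\mathcal{E}^\sharp}^2\,ds.
\]
The elementary pointwise bound $\|w\|_{\mathcal{E}^\sharp}^2 \leq \tfrac{3}{2}E(w;\cdot) + \|w\|^2$, which comes from expanding the cross term in $\|w' + \tfrac{1}{2}w\|^2$, reduces the problem to controlling $\int_0^t (1+s)^{k-1}E(w;s)\,ds$ by the right-hand side of the target inequality.

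To resolve this recursion, I will prove by induction on $\lceil k \rceil$ the intermediate estimate
\[
(1+t)^k \|w(t)\|_{\mathcal{E}^\sharp}^2 + \int_0^t (1+s)^k E(w;s)\,ds \leq C_k \Big( \|(w_0,w_1)\|_{\mathcal{H}}^2 + \int_0^t (1+s)^{k-1}\|w(s)\|^2\,ds \Big).
\]
For $0 < k \leq 1$ one has $(1+s)^{k-1} \leq 1$, and the bound $\int_0^t E(w;s)\,ds \leq \|(w_0,w_1)\|_{\mathcal{H}}^2$ (a direct consequence of Lemma \ref{lem:basic}) yields the base case. For $k > 1$, the induction hypothesis at exponent $k-1$ controls $\int_0^t (1+s)^{k-1}E(w;s)\,ds$ in terms of $\|(w_0,w_1)\|_{\mathcal{H}}^2$ and $\int_0^t (1+s)^{k-2}\|w\|^2\,ds$, and the monotonicity $(1+s)^{k-2}\leq(1+s)^{k-1}$ absorbs the latter into the target right-hand side.

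For the additional contributions $(1+t)^k\, tE(w;t)$ and $\int_0^t (1+s)^{k+1}\|w'\|^2\,ds$, I introduce $G(t) := \|w(t)\|_{\mathcal{E}^\sharp}^2 + tE(w;t)$, which, by combining the two identities in Lemma \ref{lem:identities}, satisfies $G'(t) = -2t\|w'(t)\|^2$. Multiplying by $(1+t)^k$ and integrating produces
\[
(1+t)^k G(t) + 2\int_0^t s(1+s)^k \|w'(s)\|^2\,ds = G(0) + k\int_0^t (1+s)^{k-1} G(s)\,ds.
\]
Since $G(s) \leq \tfrac{3}{2}(1+s)E(w;s) + \|w(s)\|^2$, the intermediate estimate obtained above bounds the right-hand side. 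The desired bound on $\int_0^t (1+s)^{k+1}\|w'\|^2\,ds$ then follows from the split $(1+s)^{k+1} = (1+s)^k + s(1+s)^k$ together with $\int_0^t (1+s)^k \|w'\|^2\,ds \leq \int_0^t (1+s)^k E(w;s)\,ds$, and $\int_0^t (1+s)^k \|A^{1/2}w\|^2\,ds \leq \int_0^t (1+s)^k E(w;s)\,ds$ completes the estimate. As usual, the computations are carried out first for $(w_0,w_1) \in D(\mathcal{L})$ via Proposition \ref{prop:abst:solve} and then extended to all of $\mathcal{H}$ by a density argument.

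The main obstacle is the recursive appearance of the lower-weight integral $\int_0^t (1+s)^{k-1}\|w\|_{\mathcal{E}^\sharp}^2\,ds$ on the right-hand side of the weighted identity: one is forced to cascade through successive exponents $k, k-1, \dots$ down to the unweighted base estimate. The induction on $\lceil k \rceil$, anchored at Lemma \ref{lem:basic}, is precisely what dissolves this circularity and yields constants $C_{5,k}$ depending only on $k$.
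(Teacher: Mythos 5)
Your argument is correct, but it reaches the absorption of the lower-order term by a genuinely different mechanism than the paper. You keep the natural weight $(1+t)^{k}$, which forces the term $k\int_0^t(1+s)^{k-1}\|w\|_{\mathcal{E}^\sharp}^2\,ds$ onto the right-hand side, and you then dissolve the resulting recursion by an induction on $\lceil k\rceil$ anchored at the unweighted bound of Lemma \ref{lem:basic}; the $tE(w;t)$ piece is handled through the separate functional $G(t)=\|w(t)\|_{\mathcal{E}^\sharp}^2+tE(w;t)$ with $G'=-2t\|w'\|^2$. The paper instead shifts the weight to $(4k+t)^{k}$ and bundles everything into the single Lyapunov functional $(4k+t)^{k}\|w\|_{\mathcal{E}^\sharp}^2+\frac{(4k+t)^{k+1}}{2(k+1)}E(w;t)$: since $\frac{k}{4k+t}\leq\frac14$, the term produced by differentiating the weight is absorbed pointwise into $-E(w;t)$ (up to the residual $\frac{k}{4k+t}\|w\|^2$, which becomes the $\int(1+s)^{k-1}\|w\|^2$ term), so a single differentiation-and-integration yields the lemma with no induction at all. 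Your route is longer and the constants compound through $\lceil k\rceil$ inductive steps, but it is more transparent about where the basic energy identity enters; the paper's route is shorter and gives the constant in one pass, at the cost of the slightly opaque choice of weight. Both proofs require, as you note, first working with $(w_0,w_1)\in D(\mathcal{L})$ and concluding by density.
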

\begin{proof}
This is a direct consequence of 
the following computation:
\begin{align*}
&\frac{d}{dt}\left[(4k+t)^{k}\|w(t)\|_{\mathcal{E}^\sharp}^2
+\frac{(4k+t)^{k+1}}{2(k+1)}E(w;t)
\right]
\\
&=
(4k+t)^{k}\left[
\frac{k}{4k+t}\|w(t)\|_{\mathcal{E}^\sharp}^2
-E(w;t)
+\frac{1}{2}E(w;t)
-\frac{4k+t}{k+1}\|w'(t)\|^2
\right]
\\
&\leq 
(4k+t)^{k}\left[
\frac{k}{4k+t}\|w\|^2
-\frac{1}{8}E(w;t)
-\frac{4k+t}{k+1}\|w'(t)\|^2
\right].
\end{align*}
Integrating it over $[0,t]$, we deduce the desired estimate.
\end{proof}
Decay estimates for $u^{(n)}=\frac{d^n}{dt^n}u$ and $A^{n/2}u$ 
of the weak solution $u$ to \eqref{eq:abst-dw} are stated as follows. 

\begin{lemma}\label{lem:regularity1}
For every $n\in\N$, there exists a positive constant 
$C_{6,n}$ such that the following holds:
for every $(u_0,u_1)\in D(\mathcal{L}^n)$, 
the corresponding 
\rev{%
(strong)
}
solution $u$ of \eqref{eq:abst-dw} 
satisfies
\begin{equation}\label{eq:lem:t-deri}
\Big(\|u^{(n)}(t)\|^2+(1+t)
E(u^{(n)};t)\Big)
\leq C_{6,n}(1+t)^{-2n}
\|(u_0,u_1)\|_{D(\mathcal{L}^n)}^2, \quad t\geq 0
\end{equation}
and 
\begin{equation}\label{eq:lem:A^{n/2}}
\Big(\|A^{n/2}u(t)\|^2+(1+t)
E(A^{n/2}u;t)\Big)
\leq C_{6,n}
(1+t)^{-n}
\|(u_0,u_1)\|_{D(\mathcal{L}^n)}^2, \quad t\geq 0.
\end{equation}
\end{lemma}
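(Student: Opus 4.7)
The plan is a two-stage argument: I first prove the time-derivative bound \eqref{eq:lem:t-deri} by a bootstrap induction on $n$, and then reduce the estimate \eqref{eq:lem:A^{n/2}} for $A^{n/2}u$ to it by trading powers of $A$ for time derivatives via the equation. The induction hypothesis $P(n)$ I carry is the stronger integrated statement
\begin{gather*}
(1+t)^{2n}\Big[\|u^{(n)}(t)\|_{\mathcal{E}^\sharp}^2 + tE(u^{(n)};t)\Big]
+ \int_0^t (1+s)^{2n}\Big[\|A^{1/2}u^{(n)}(s)\|^2 + (1+s)\|u^{(n+1)}(s)\|^2\Big]ds \\
\leq C_n\|(u_0,u_1)\|_{D(\mathcal{L}^n)}^2,
\end{gather*}
because its integrated part is exactly the quantity Lemma~\ref{lem:for-itelation} consumes at the next step. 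The base case $P(0)$ is Lemma~\ref{lem:basic}. For the inductive step, since $(u_0,u_1)\in D(\mathcal{L}^{n+1})$ Proposition~\ref{prop:abst:solve} gives enough regularity that $w:=u^{(n+1)}$ is a weak solution of the same second-order equation, with initial data $(u^{(n+1)}(0),u^{(n+2)}(0))=\mathcal{L}^{n+1}(u_0,u_1)\in\mathcal{H}$; applying Lemma~\ref{lem:for-itelation} to $w$ with $k=2(n+1)$ and using $P(n)$ to control the right-hand side integral $\int_0^t(1+s)^{2n+1}\|u^{(n+1)}\|^2\,ds$ yields $P(n+1)$. The pointwise estimate \eqref{eq:lem:t-deri} for $t\geq 1$ then follows from the equivalence of $\|\cdot\|_{\mathcal{E}^\sharp}$ with the product norm and from $t\geq\frac{1}{2}(1+t)$, while the regime $t\in[0,1]$ is handled by continuity and the initial bound $E(u^{(n)};0)\leq\|\mathcal{L}^n(u_0,u_1)\|_{\mathcal{H}}^2$.

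For \eqref{eq:lem:A^{n/2}}, I use the equation in the form $Au=-(\partial_t^2+\partial_t)u=-\partial_t(\partial_t+1)u$ on strong solutions, iteration of which gives the operator identity
\[
A^m u(t) = (-1)^m \sum_{k=0}^m \binom{m}{k} u^{(k+m)}(t), \qquad m\in\Z_{\geq 0}.
\]
When $n=2m$ is even, \eqref{eq:lem:A^{n/2}} is a direct consequence of this identity combined with \eqref{eq:lem:t-deri} applied to each $u^{(k+m)}$; the $k=0$ summand is the slowest-decaying and produces the target rate $(1+t)^{-2m}=(1+t)^{-n}$. When $n=2m+1$ is odd, I apply $A^{1/2}$ to the identity and bound $\|A^{1/2}u^{(k+m)}\|^2\leq E(u^{(k+m)};t)$ using \eqref{eq:lem:t-deri} again. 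The companion estimate for $(1+t)E(A^{n/2}u;t)$ is obtained analogously, using $(A^{n/2}u)'=A^{n/2}u'$ together with a representation of $A^{(n+1)/2}u$ of the same type (shifted by one derivative or by a half-power of $A$).

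The main obstacle will be regularity bookkeeping: the explicit formula for $A^{n/2}u$ and its derivative bring in $u^{(j)}$ for $j$ up to $2m+1$ (when $n=2m+1$) and up to $2m+2$ in $E(A^m u;t)$ (when $n=2m$), and I must verify that each such term is controlled using only $(u_0,u_1)\in D(\mathcal{L}^n)$. This works out because the highest-order derivatives enjoy strictly faster decay than the target rate---so their looser prefactors are absorbed---while the lowest-order derivative in each sum, $u^{(m)}$ or $A^{1/2}u^{(m)}$, sits exactly at the critical index and dictates the final power $(1+t)^{-n}$ that matches the claim.
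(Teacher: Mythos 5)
Your proof of \eqref{eq:lem:t-deri} is essentially the paper's proof: the same bootstrap on the integrated quantity \eqref{eq:itelation-deri}, with Lemma \ref{lem:basic} as the base case and Lemma \ref{lem:for-itelation} applied to $w=u^{(m)}$ (the paper's text says $(w,k)=(u^{(m+1)},2m+2)$ but its displayed inequality is the one you use, with the right-hand integral absorbed by the previous induction step).

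For \eqref{eq:lem:A^{n/2}} you genuinely diverge. The paper runs a second induction of exactly the same type, applying Lemma \ref{lem:for-itelation} to $w=A^{m/2}u$ with weight exponent $k=m$ (half-power steps, the integral term $\int_0^t(1+s)^{m-1}\|A^{m/2}u\|^2\,ds$ being controlled by the dissipation integral from the previous step), so the two halves of the lemma are proved by one and the same mechanism and an integrated dissipation estimate for $A^{m/2}u$ comes for free. You instead convert powers of $A$ into time derivatives through $A=-\partial_t(\partial_t+1)$ on strong solutions and reduce everything to \eqref{eq:lem:t-deri}. This works: the identity $A^mu=(-1)^m\sum_{k=0}^m\binom{m}{k}u^{(m+k)}$ is correct for $(u_0,u_1)\in D(\mathcal{L}^{2m})$, the $k=0$ term sets the rate $(1+t)^{-n}$, and the highest-order terms ($u^{(2m+1)}$, $A^{1/2}u^{(2m)}$ in the energy, $u^{(2m+2)}$ when $n=2m+1$) are reachable with only $D(\mathcal{L}^n)$ data precisely because they sit inside $E(u^{(j)};t)$ at level $j\le n$, exactly as you anticipate in your final paragraph. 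The trade-off is that your route needs the strong-solution regularity from Proposition \ref{prop:abst:solve} to legitimize the operator identity, plus the standard moment inequality $\|\mathcal{L}^j x\|\lesssim\|x\|+\|\mathcal{L}^n x\|$ ($j\le n$) to express all intermediate graph norms by $\|(u_0,u_1)\|_{D(\mathcal{L}^n)}$, whereas the paper's second induction recycles its machinery with no extra bookkeeping. Both arguments are sound.
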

\begin{proof}
We first show \eqref{eq:lem:t-deri} 
by proving 
\begin{align}
\nonumber 
&(1+t)^{2n}\Big(
\|u^{(n)}(t)\|_{\mathcal{E}^\sharp}^2+
tE(u^{(n)};t)
\Big)
\\
&
\label{eq:itelation-deri}
+\int_0^t(1+s)^{2n}\Big(\|A^{1/2}u^{(n)}(s)\|^2+(1+s)\|u^{(n+1)}(s)\|^2\Big)\,ds
\leq 
\widetilde{C}_{n}\|(u_0,u_1)\|_{D(\mathcal{L}^n)}^2
\end{align}
for some series of positive constants $\{\widetilde{C}_{n}\}_{n\in\Z_{\geq 0}}$.
We argue by induction as explained above. 
In view of Lemma \ref{lem:basic}, we already have \eqref{eq:itelation-deri} with $n=0$. 
For the estimate \eqref{eq:itelation-deri} with $n\in\N$, 
we assume the validity of \eqref{eq:itelation-deri} with $n=m-1$ holds with $m\in\N$.  
Then Lemma \ref{lem:for-itelation} with $(w,k)=(u^{(m+1)},2m+2)$
gives for every $t\geq 0$, 
\begin{align*}
&(1+t)^{2m}\Big(\|u^{(m)}(t)\|_{\mathcal{E}^\sharp}^2
+tE(u^{(m)};t)
\Big)
\\
&+\int_0^t (1+s)^{2m}\Big(\|A^{1/2}u^{(m)}(s)\|^2+(1+s)\|u^{(m+1)}(s)\|^2\Big)\,ds
\\
&\leq
C_{5,2m}\left(
\|(u^{(m)}(0),u^{(m+1)}(0))\|_{\mathcal{H}}^2
+\int_0^t (1+s)^{2m-1}\|u^{(m)}(s)\|^2\,ds
\right)
\\
&\leq
C_{5,2m}
\left(
\|\mathcal{L}^m(u_0,u_1)\|_{\mathcal{H}}^2
+\widetilde{C}_{m-1}\|(u_0,u_1)\|_{D(\mathcal{L}^{m-1})}^2
\right).
\end{align*}
For \eqref{eq:lem:A^{n/2}}, 
a similar induction argument via Lemma \ref{lem:for-itelation} with 
$(w,k)=(A^{m/2}u,m)$ provides the desired inequality. 
\end{proof}

\subsection{Asymptotic expansion}
Here we consider the asymptotic expansion for weak solutions to \eqref{eq:abst-dw} as $t\to \infty$. 
Namely, we would explain how to construct a family $\{v_\ell(t)\}_{\ell\in \Z_{\geq0}}$ 
satisfying 
for every $n\in \Z_{\geq0}$, 
\begin{gather*}
\|v_n(t)\|=O(t^{-n}),\quad \left\|u(t)-V_n(t)\right\|= O(t^{-n}) 
\quad \text{as}\ t\to \infty,
\end{gather*}
where $V_0(t)=0$ and $V_n(t)=\sum_{\ell=0}^{n-1}v_{\ell}(t)$ for $n\in\N$.
\subsubsection{Asymptotic expansion for regular initial data (revisited)}\label{subsec:pre}

If the initial data are regular enough, then 
the asymptotic profiles of 
(regular) solutions to \eqref{eq:abst-dw} 
has been found in \cite{Sobajima_MathAnn} as in Definition \ref{def:asymptotics}. 
The profiles $\{v_{\ell}(\cdot)\}_{\ell\in\Z_{\geq0}}$ are 
successively derived by the following procedure.
Put $v_{0,0}(t)=e^{-tA}(u_0+u_1)$ and 
define $\{v_{\ell,0}\}_{\ell\in\N}$ 
successively via the following relation
\[
\begin{cases}
v_{\ell,0}'(t)+Av_{\ell,0}(t)=-v_{\ell-1,0}'(t)
\quad\text{in}\ \R_+,
\\
v_{\ell,0}(0)=(-1)^{\ell}u_1,
\end{cases}
\]
or directly,
\[
v_{\ell,0}(t)
=
(-1)^{\ell}
\left[
\sum_{j=0}^\ell
\begin{pmatrix}\ell\\j\end{pmatrix}
\frac{(-tA)^j}{j!}e^{-tA}(u_0+u_1)
-
\sum_{j=0}^{\ell-1}
\begin{pmatrix}\ell-1\\j\end{pmatrix}
\frac{(-tA)^j}{j!}e^{-tA}u_0
\right]
\quad(\ell\in\N).
\]
This provides that
for every $n\in\N$, 
the solution $u$ of \eqref{eq:abst-dw} 
and the unique solution 
$U_{n}$
of 
\begin{align}\label{eq:err-equation}
\begin{cases}
U_{n}''(t)
+AU_{n}(t)
+U_{n}'(t)
=-v_{n-1,0}'(t)
\quad \text{in}\ \R_+, 
\\
(U_{n},U_{n}')(0)=(0,(-1)^n u_1)
\end{cases}
\end{align}
are connected with the relation
\begin{align}\label{decomposition}
u(t)
=\sum_{\ell=0}^{n-1} \frac{d^\ell}{dt^{\ell}}v_{\ell,0}(t)
+\frac{d^n}{dt^n}U_{n}(t). 
\end{align}
In this connection, we define the family $\{v_\ell(\cdot)\}_{\ell\in \Z_{\geq 0}}$
by $v_{\ell}=\frac{d^\ell}{dt^\ell}v_{\ell,0}$ $(\ell\in\Z_{\geq 0})$. 
In \cite{Sobajima_MathAnn}, 
it is proved that 
if  the initial data are regular ($(u_0,u_1)\in D(A^{n-\frac{1}{2}})\times D(A^{n-\frac{1}{2}})$),
then this family actually gives
the asymptotic profiles of the solution $u$ to \eqref{eq:abst-dw}. 
The precise assertion is the following. 

\begin{theorem}[{\cite[Theorem 1.1]{Sobajima_MathAnn}}]
\label{thm:regular-ini}
For every $n\in \N$ $(n\neq 0)$, 
there exists a positive constant $C_{7,n}$ such that 
the following assertion holds:
If $(u_0,u_1)\in D(A^{n-\frac{1}{2}})\times D(A^{n-\frac{1}{2}})$, 
then the corresponding solution $u$ of \eqref{eq:abst-dw} 
satisfies 
\begin{align*}
\|u(t)-V_n(t)\|
&\leq C_{7,n}(1+t)^{-(n-\frac{1}{2})}\|(u_0,u_1)\|_{D(A^{n-1/2})\times D(A^{n-1/2})},
\quad t\geq 0, 
\end{align*}
where the family $\{V_n(\cdot)\}_{n\in \N}$ is given in Definition \ref{def:asymptotics}. 
\end{theorem}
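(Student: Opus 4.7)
The plan is to build on the decomposition $u = V_n + U_n^{(n)}$ sketched just before the statement and combine it with the auxiliary-quantity energy scheme from Lemmas \ref{lem:identities}--\ref{lem:regularity1} to control the residual $R_n := u - V_n = U_n^{(n)}$.

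First I would verify that the $v_{\ell,0}$ defined by the explicit formula satisfy the recursion $v_{\ell,0}' + A v_{\ell,0} = -v_{\ell-1,0}'$ together with $v_{\ell,0}(0) = (-1)^\ell u_1$; this is a direct induction on $\ell$. Since $(u_0, u_1) \in D(A^{n-1/2}) \times D(A^{n-1/2})$, the spectral calculus together with the analyticity of $e^{-tA}$ ensures that all iterated time derivatives of $v_{\ell,0}$ appearing below make sense as strongly continuous $H$-valued functions on $(0,\infty)$. A telescoping computation using the recursion then gives the identity $L v_\ell = v_\ell'' - v_{\ell-1}''$, where $L w := w'' + A w + w'$, and hence $L V_n = v_{n-1}''$. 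Consequently, the residual $R_n$ satisfies the damped wave equation
\[
R_n''(t) + A R_n(t) + R_n'(t) = -v_{n-1,0}^{(n+1)}(t),
\]
which is exactly the equation one would derive by differentiating \eqref{eq:err-equation} $n$ times, consistent with \eqref{decomposition}.

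Next I would apply the energy method with the auxiliary quantity $\|\cdot\|_{\mathcal{E}^\sharp}$ to this inhomogeneous equation. The analogue of identity \eqref{eq:en2} in the inhomogeneous setting reads
\[
\frac{d}{dt}\|R_n(t)\|_{\mathcal{E}^\sharp}^2 + E(R_n;t) = -2\Big\langle v_{n-1,0}^{(n+1)}(t),\, R_n'(t) + \tfrac{1}{2}R_n(t)\Big\rangle,
\]
and the source is controlled by expanding $v_{n-1,0}^{(n+1)}$ as a finite sum of terms of the form $c\, t^p A^q e^{-tA} u_i$ and applying the spectral estimate $\|A^k e^{-tA} f\| \leq C_k t^{-(k-\beta)}\|A^\beta f\|$ with $\beta = n-1/2$. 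Running a time-weighted energy iteration in the spirit of Lemma \ref{lem:for-itelation}, with successive weights $(1+t)^{k}$ for $k = 1, 2, \ldots, 2n-1$, and using Cauchy--Schwarz to absorb the source into the dissipation term $E(R_n;s)$ while carrying over the lower-order $\|R_n\|^2$ contribution from one step to the next, yields the bound $\|R_n(t)\|^2 \leq C_{7,n}^2 (1+t)^{-(2n-1)}\|(u_0,u_1)\|_{D(A^{n-1/2})\times D(A^{n-1/2})}^2$, which is precisely the claim.

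The hard part will be the careful book-keeping of powers of $A$ in $v_{n-1,0}^{(n+1)}$ together with the matching of available regularity. Each time derivative of $v_{n-1,0}$ can either lower a polynomial factor $(tA)^j$ by one or introduce an extra $A$ through $e^{-tA}$, so $v_{n-1,0}^{(n+1)}$ contains terms with up to $A^{2n}$ acting on the initial data; the hypothesis $D(A^{n-1/2})$ is the smallest space for which the spectral estimate delivers a source whose decay in $t$ is integrable enough for the weighted iteration, and the half-integer loss in the final rate $n - \tfrac{1}{2}$ reflects this exact trade-off between regularity and time decay. A secondary technical point is the treatment of the initial values $R_n(0)$ and $R_n'(0)$, whose naive computation involves powers of $A$ beyond $D(A^{n-1/2})$; I would resolve this by a density argument, first proving the estimate for data in $D(\mathcal{L}^n)$ (where Lemma \ref{lem:regularity1} applies cleanly and all quantities are classical) and then extending by continuity to $D(A^{n-1/2}) \times D(A^{n-1/2})$.
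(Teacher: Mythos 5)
Your setup is the same as the paper's: the decomposition $u=V_n+\frac{d^n}{dt^n}U_n$, the correct inhomogeneous equation for the residual, and the auxiliary quantity $\|\cdot\|_{\mathcal{E}^\sharp}$. The gap is in the energy iteration. You propose to run the weighted scheme of Lemma \ref{lem:for-itelation} \emph{directly on} $R_n=U_n^{(n)}$ with weights $(1+t)^k$, $k=1,\dots,2n-1$, ``carrying over the lower-order $\|R_n\|^2$ contribution from one step to the next.'' This does not close. Each application of Lemma \ref{lem:for-itelation} to the \emph{same} function $R_n$ demands control of $\int_0^t(1+s)^{k-1}\|R_n(s)\|^2\,ds$, but the previous step only yields integrated dissipation of $\|A^{1/2}R_n\|^2$ and $\|R_n'\|^2$ — never of $\|R_n\|^2$ itself — and already at $k=1$ the required bound $\int_0^\infty\|R_n(s)\|^2\,ds<\infty$ is essentially the conclusion you are trying to prove. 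The paper's iteration closes precisely because it climbs through the \emph{distinct} functions $U_n, U_n',\dots,U_n^{(n-1)}$: the lower-order term $\|U_n^{(m)}\|^2=\|\tfrac{d}{dt}U_n^{(m-1)}\|^2$ at level $m$ is exactly the dissipation term produced at level $m-1$, and the $n$-th derivative is never estimated by its own energy identity but read off at the end via $\|U_n^{(n)}(t)\|^2\le E(U_n^{(n-1)};t)$.

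The second, related problem is your treatment of the initial values. You correctly note that $R_n'(0)=u_1-V_n'(0)$ involves $A^n u_0$ and $A^n u_1$, which lie outside $D(A^{n-1/2})$, but the proposed density argument cannot repair this: any energy inequality obtained at the level of $R_n$ carries $\|(R_n(0),R_n'(0))\|_{\mathcal{H}}$ on its right-hand side, and this quantity is infinite (not merely unknown) for generic data in $D(A^{n-1/2})\times D(A^{n-1/2})$, so the constant does not pass to the limit along an approximating sequence. Extension by density requires the smooth-data estimate to already be phrased in the $D(A^{n-1/2})$-norm, which forces the reorganization above: starting from $U_n$, whose data $(0,(-1)^n u_1)$ costs only $\|u_1\|$, level $m$ of the climb needs only $\|(U_n^{(m)}(0),U_n^{(m+1)}(0))\|_{\mathcal{H}}$, which stays within the hypothesis for all $m\le n-1$. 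Your observation that the source should be handled via the factorization into terms $(tA)^je^{-tA}=\big((tA)^je^{-tA/2}\big)e^{-tA/2}$ (uniformly bounded prefactor) and the integrability estimate $\int_0^\infty(1+t)^m\|A^{(m+1)/2}e^{-tA}f\|^2dt\le C_m\|f\|_{D(A^{m/2})}^2$ is the right one, but it too must be applied level by level with the weight $(1+t)^{2m+1}$ matched to the source $A^{m+1}v_{n-1,m+1}$, rather than once at the top with weight $(1+t)^{2n-1}$ against $v_{n-1,0}^{(n+1)}$, where the pairing with the slowly decaying term $\tfrac12 R_n$ is not integrable.
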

\begin{remark}
One can see that 
the asymptotic profiles $\{v_\ell(\cdot)\}_{\ell\in \Z_{\geq 0}}$ satisfy 
\[
\|v_\ell(t)\|\leq C_\ell t^{-\ell}\|(u_0,u_1)\|_{H\times H}, \quad t\geq 1
\]
with some constants $C_\ell$ depending only on $\ell$. 
Therefore Theorem \ref{thm:regular-ini} actually 
provides a justification of asymptotic expansion. 
\end{remark}

Here we would give an alternative proof via the use of the quantity $\|\cdot\|_{\mathcal{E}^\sharp}$. 
This is not a novelty, but the discussion is slightly shortened.

\begin{proof}[Sketch of the proof of Theorem \ref{thm:regular-ini}]
Let $n\in\N$.
To derive the estimate for $\frac{d^n}{dt^n}U_{n}(t)$, 
we start with the usual energy estimate for $U_{n}$
and then proceed a similar argument as 
the proof of Lemma \ref{lem:regularity1}. 

Put for $\ell,k\in \Z_{\geq 0}$, $v_{0,k}(t)=(-1)^{k}e^{-tA}(u_0+u_1)$ and 
\begin{align*}
v_{\ell,k}(t)
&=
(-1)^{\ell+k}
\left[
\sum_{j=0}^\ell
\begin{pmatrix}\ell+k\\j+k\end{pmatrix}
\frac{(-tA)^j}{j!}e^{-tA}(u_0+u_1)
-
\sum_{j=0}^{\ell-1}
\begin{pmatrix}\ell+k-1\\j+k\end{pmatrix}
\frac{(-tA)^j}{j!}e^{-tA}u_0
\right],
\end{align*}
which satisfies $\frac{d^k}{dt^k}v_{\ell,0}=A^kv_{\ell,k}$. 
Using \eqref{eq:err-equation}, we can find 
a couple of equalities
\begin{align*}
\frac{d}{dt}
E(U_{n};t)
+2\|U_{n}'(t)\|^2
&=
-2\LR{Av_{n-1,1}(t),U_{n}'(t)},
\\
\frac{d}{dt}\|U_{n}(t)\|_{\mathcal{E}^\sharp}^2
+
E(U_{n};t)
&=
-2\left\langle Av_{n-1,1}(t), U_{n}'(t)\right\rangle
-\left\langle A^{1/2}v_{n-1,1}(t), A^{1/2}U_{n}(t)\right\rangle.
\end{align*}
Therefore by a straightforward calculation, we have
\begin{align*}
&\frac{d}{dt}
\left[
\|U_{n}(t)\|_{\mathcal{E}^\sharp}^2
+
\frac{4+t}{2}E(U_{n};t)
\right]
+
E(U_{n};t)
+(4+t)\|U_{n}'(t)\|^2
\\
&\leq 2\|Av_{n-1,1}(t)\|\|U_{n}'(t)\|+\|A^{1/2}v_{n-1,1}(t)\|\|A^{1/2}U_{n}(t)\|
\\
&\quad+
\frac{1}{2}E(U_{n};t)
+(4+t)\|Av_{n-1,1}(t)\|\|U_{n}'(t)\|
\\
&\leq 
\|A^{1/2}v_{n-1,1}(t)\|^2+\frac{12+t}{2}\|Av_{n-1,1}(t)\|^2
+\frac{3}{4}E(U_{n};t)
+\frac{4+t}{2}\|U_{n}'(t)\|^2
\end{align*}
and therefore 
\begin{align*}
&\frac{d}{dt}
\left[
\|U_{n}(t)\|_{\mathcal{E}^\sharp}^2
+
\frac{4+t}{2}E(U_{n};t)
\right]
+
\frac{1}{4}
E(U_{n};t)
+\frac{4+t}{2}\|U_{n}'(t)\|^2
\\
&\leq \|A^{1/2}v_{n-1,1}(t)\|^2+\frac{12+t}{2}\|Av_{n-1,1}(t)\|^2.
\end{align*}
Since the right-hand side of the above inequality is integrable on $(0,\infty)$ 
via the estimate 
\[
\int_{0}^\infty(1+t)^{m}\|A^{\frac{m+1}{2}}e^{-tA}f\|^2\,dt\leq C_m\|f\|_{D(A^{m/2})}^2\quad (m\in\Z_{\geq0}, \ f\in D(A^{m/2}))
\]
for some positive constant $C_m$ (see e.g., \cite[Lemma 2.1]{Sobajima_MathAnn}), 
we arrive at 
\begin{align*}
&\|U_{n}(t)\|_{\mathcal{E}^\sharp}^2
+
(1+t)E(U_{n};t)
\\
&+\int_0^t\|A^{1/2}U_{n}(t)\|^2+
(1+s)\|U_{n}'(s)\|^2\,ds
\leq 
C(\|u_0+u_1\|_{D(A^{1/2})}^2+\|u_1\|_{D(A^{1/2})}^2).
\end{align*}
We can proceed a similar argument as Lemma \ref{lem:regularity1} 
(but with inhomogeneous terms)
for $w=\frac{d^m}{dt^m}U_{n}$ with $m=1,\ldots,n-1$ 
by using the respective equations 
\begin{align*}
w''(t)
+Aw(t)
+w'(t)
=-\frac{d^{m+1}}{dt^{m+1}}v_{n-1,0}(t) =-A^{m+1} v_{n-1,m+1}(t), 
\quad t\in \R_+.
\end{align*}
Then (after some computation) we finally reach 
\begin{align*}
(1+t)^{2n-1}\|u(t)-V_n(t)\|^2
&=
(1+t)^{2n-1}\left\|\tfrac{d^{n}}{dt^n}U_n(t)\right\|^2
\\	
&\leq 
(1+t)^{2n-2}
\Big(\|\tfrac{d^{n-1}}{dt^{n-1}}U_{n}(t)\|_{\mathcal{E}^\sharp}^2
+(1+t)E(\tfrac{d^{n-1}}{dt^{n-1}}U_{n};t)
\Big)
\\
&\leq C
\big(\|u_0\|_{D(A^{n-1/2})}^2+\|u_1\|_{D(A^{n-1/2})}^2\big)
\end{align*}
which is the desired inequality.
\end{proof}

\begin{remark}\label{rem:comment}
The above procedure also provides the following inequality
\[
(1+t)^{2n}\Big(\|u(t)-V_n(t)\|_{\mathcal{E}^\sharp}^2
+(1+t)E(u-V_n;t)
\Big)
\leq \widetilde{C}_{7,n}
\big(\|u_0\|_{D(A^{n})}^2+\|u_1\|_{D(A^{n})}^2\big), \quad t\geq 0.
\]
\end{remark}

\subsubsection{Asymptotic expansion for general initial data}

In contrast, 
\rev{%
Theorem \ref{intro:thm:abst-expansion}
}
asserts the same consequence 
as that of Theorem \ref{thm:regular-ini}
without any additional regularity for the initial data. 
To prove Theorem \ref{intro:thm:abst-expansion}, 
the following decomposition of identity $1_H$
via the resolvent operator $J=(1_H+A)^{-1}$ is crucial.

\begin{lemma}\label{lem:unit-decomposition}
For every $n\in\N$, there exist
a pair of linear bounded operators $(I_n, J_n)$ 
from $H$ to $D(A^n)$ such that $1_H=I_n+A^{n}J_n$.
\end{lemma}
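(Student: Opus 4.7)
The plan is to exploit the resolvent $J=(1_H+A)^{-1}$ together with the trivial identity $(1_H+A)^{2n}J^{2n}=1_H$, which upon binomial expansion---valid since $A$ and $J$ commute as functions of the same selfadjoint operator---yields
\[
1_H=\sum_{k=0}^{2n}\binom{2n}{k}A^k J^{2n}.
\]
Splitting the sum at $k=n$, I would set
\[
I_n=\sum_{k=0}^{n-1}\binom{2n}{k}A^k J^{2n},\qquad J_n=\sum_{j=0}^{n}\binom{2n}{n+j}A^j J^{2n},
\]
so that $1_H=I_n+A^n J_n$ is immediate.

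The boundedness of each summand on $H$ follows from the identity $AJ=1_H-J$ (a restatement of $(1_H+A)J=1_H$) combined with commutativity: one has $A^kJ^k=(1_H-J)^k$, hence $A^k J^{2n}=(1_H-J)^k J^{2n-k}$. Since $A$ is nonnegative selfadjoint, $\|J\|\leq 1$ and $\|1_H-J\|\leq 1$ by the spectral theorem, so each summand has operator norm at most $1$, and both $I_n$ and $J_n$ are bounded on $H$.

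The only genuine obstacle is the range condition $I_n,J_n:H\to D(A^n)$, which forces the otherwise peculiar use of $J^{2n}$ rather than $J^n$. A short induction from $(1_H+A)J=1_H$ gives $J^m:H\to D(A^m)$ for every $m\in\N$; applied with $m=2n$, it ensures that for each $k\in\{0,1,\ldots,n-1\}$ the summand $A^kJ^{2n}$ maps $H$ into $D(A^{2n-k})\subseteq D(A^n)$ since $2n-k>n$, and similarly for $J_n$ with $j\leq n$. A naive alternative such as $J_n=J^n$ would leave the complementary operator $I_n=1_H-(1_H-J)^n$ only in $D(A)$, so the extra $n$ units of smoothing built into $J^{2n}$ are exactly what is needed to land in the required domain; with this verification the lemma is complete.
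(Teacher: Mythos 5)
Your proof is correct and follows essentially the same route as the paper: expand a power of $(1_H+A)J$ binomially and split the sum at $k=n$, using $A^kJ^k=(1_H-J)^k$ for boundedness and the smoothing of $J^m$ for the range condition. The only (immaterial) difference is that you use the exponent $2n$ where the paper uses the slightly more economical $2n-1$.
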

\begin{proof}
Observing that 
\begin{align*}
I_H&=(1_H+A)^{2n-1}J^{2n-1}
\\
&=
\sum_{k=0}^{n-1}
\begin{pmatrix}2n-1\\k\end{pmatrix}A^{k}J^{2n-1}
+
\sum_{k=n}^{2n-1}
\begin{pmatrix}2n-1\\k\end{pmatrix}A^{k}J^{2n-1}
\\
&=
J^{n}
\left(
\sum_{k=0}^{n-1}
\begin{pmatrix}2n-1\\k\end{pmatrix}A^{k}J^{n-1}\right)
+
A^{n}J^n
\left(\sum_{k=0}^{n-1}
\begin{pmatrix}2n-1\\n+k\end{pmatrix}A^{k}J^{n-1}\right), 
\end{align*}
we can choose $I_n$ and $J_n$ respectively as
\[
I_n=J^{n}
\left(
\sum_{k=0}^{n-1}
\begin{pmatrix}2n-1\\k\end{pmatrix}A^{k}J^{n-1}\right), 
\quad 
J_n=
J^n
\left(\sum_{k=0}^{n-1}
\begin{pmatrix}2n-1\\n+k\end{pmatrix}A^{k}J^{n-1}\right).
\]
\end{proof}

By using $I_n$ and $J_n$, we decompose 
the solution $u$ of \eqref{eq:abst-dw} as 
\begin{equation}\label{eq:dec-m}
u(t)=I_nu(t)+A^{n}J_nu(t).
\end{equation}
Then we can see that 
the decomposition in \cite{Sobajima_MathAnn} 
(explained in the beginning of Section \ref{subsec:pre})
is applicable to the term $I_nu$. 
\begin{lemma}\label{lem:error-profile}
Let $u$ be the solution of \eqref{eq:abst-dw} with $(u_0,u_1)\in\mathcal{H}$ 
and let $n\in\N$.
Put $U_{*}$ as the solution of
the following problem:
\begin{align*}
\begin{cases}
U_{*}''(t)+AU_{*}(t)+U_{*}'(t)=-I_nv_{n-1,0}'(t)\quad \text{in}\ \R_+, 
\\
(U_{*},U_{*}')(0)=(0,(-1)^{n}I_nu_1),
\end{cases}
\end{align*}
where $v_{n-1,0}$ is given in the beginning of Section \ref{subsec:pre}.
Then one has 
\[
I_nu(t)=
\sum_{\ell=0}^{n-1}I_n v_{\ell}(t)+\frac{d^n}{dt^n}U_{*}(t)
=I_nV_n(t)+\frac{d^n}{dt^n}U_{*}(t).
\]
\end{lemma}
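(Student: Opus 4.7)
The plan is to reduce the claim to applying the decomposition \eqref{decomposition} of Section~\ref{subsec:pre} to the regularized solution $w(t) := I_n u(t)$. The essential observation is that $I_n$, being built from $A$ and $J = (1_H+A)^{-1}$ via the formula in Lemma~\ref{lem:unit-decomposition}, is a bounded operator on $H$ that commutes with $A$ (on $D(A)$) and with $\{e^{-tA}\}_{t\geq 0}$. Consequently, its componentwise lift to $\mathcal{H}$ commutes with the semigroup $\{e^{t\mathcal{L}}\}_{t\geq 0}$.

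First I would verify that $w := I_n u$ is the (strong) solution of \eqref{eq:abst-dw} with initial data $(I_n u_0, I_n u_1)$. Since $I_n$ maps $H$ into $D(A^n)$, this initial data lies in $D(A^n)\times D(A^n)\subset D(A^{n-\frac{1}{2}})\times D(A^{n-\frac{1}{2}})$, so $w$ has enough regularity for the derivation of Section~\ref{subsec:pre} to apply directly.

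Next, I would run that derivation with $u$ replaced by $w$. Denoting by $\tilde v_{\ell,0}$ and $\tilde U_n$ the profiles and remainder produced by the formulas in Section~\ref{subsec:pre} but with $(u_0,u_1)$ replaced by $(I_n u_0, I_n u_1)$, the identity \eqref{decomposition} yields
\[
w(t) = \sum_{\ell=0}^{n-1} \frac{d^\ell}{dt^\ell}\tilde v_{\ell,0}(t) + \frac{d^n}{dt^n}\tilde U_n(t).
\]
Because $I_n$ commutes with $A$ and with $e^{-tA}$, the explicit formula for $v_{\ell,0}$ in Section~\ref{subsec:pre} gives $\tilde v_{\ell,0}(t) = I_n v_{\ell,0}(t)$ for each $\ell$. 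In particular, the forcing term in the equation for $\tilde U_n$ equals $-I_n v_{n-1,0}'(t)$ and the initial velocity equals $(-1)^n I_n u_1$, so the problem defining $\tilde U_n$ coincides with the one defining $U_*$; by uniqueness, $\tilde U_n = U_*$. Combining these identifications with $v_\ell = \frac{d^\ell}{dt^\ell} v_{\ell,0}$ and the linearity of $I_n$ produces
\[
I_n u(t) = \sum_{\ell=0}^{n-1} I_n v_\ell(t) + \frac{d^n}{dt^n} U_*(t),
\]
and the second equality stated in the lemma is just the linearity of $I_n$ applied to $V_n = \sum_{\ell=0}^{n-1} v_\ell$.

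The only delicate point I anticipate is the commutativity bookkeeping: one has to justify that $I_n$ genuinely commutes with $e^{t\mathcal{L}}$ on $\mathcal{H}$ and with $t$-differentiation when acting componentwise on $\mathcal{U}(t) = (u(t),u'(t))$. This reduces to the boundedness of $I_n$ together with the spectral calculus for the selfadjoint operator $A$, so it is routine rather than a serious obstacle; once it is in place, the reduction to the already-established decomposition of Section~\ref{subsec:pre} finishes the argument.
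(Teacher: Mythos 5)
Your proposal is correct and follows essentially the same route as the paper: the paper's proof likewise observes that $u_*=I_nu$ solves \eqref{eq:abst-dw} with the regularized initial data $(I_nu_0,I_nu_1)\in D(A^{n+1/2})\times D(A^n)$ and then invokes the decomposition \eqref{decomposition} of Section \ref{subsec:pre}. Your additional bookkeeping (commutation of $I_n$ with $A$, $e^{-tA}$ and $e^{t\mathcal{L}}$, and the identification of the remainder with $U_*$ by uniqueness) is exactly what the paper leaves implicit.
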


\begin{proof}
Since $u_*=I_nu$ satisfies the following problem
(especially, the regularity of the initial data):
\begin{gather*}
\begin{cases}
u_{*}''(t)+Au_{*}(t)+u_{*}'(t)=0 \quad \text{in}\ \R_+, 
\\
(u_{*},u_{*}')(0)=(I_n u_0,I_nu_1)
\in D(A^{n+1/2})\times D(A^{n}),
\end{cases}
\end{gather*}
the decomposition in \cite{Sobajima_MathAnn}
is applicable, which is the desired assertion.
\end{proof}

\begin{proof}[Proof of Theorem \ref{intro:thm:abst-expansion}]
We employ Lemmas 
\ref{lem:unit-decomposition} 
and \ref{lem:error-profile} to 
the weak solution $u$ of \eqref{eq:abst-dw}. Then 
\begin{align*}
u(t)-V_n(t)
&=
I_nu(t)+A^{n}J_nu(t)
-I_nV_n(t)-A^{n}J_nV_n(t)
\\
&=
\frac{d^n}{dt^n}U_{*}(t)+A^{n}J_nu(t)+A^nJ_nV_n(t).
\end{align*}
Hereafter we always consider estimates for 
$t\geq 1$. 
Applying the procedure of Theorem \ref{thm:regular-ini} 
and also Remark \ref{rem:comment}, we can deduce 
\begin{align*}
\left\|\frac{d^n}{dt^n}U_{*}(t)\right\|^2
+
t
E\left(\frac{d^n}{dt^n}U_{*};t\right)
&\leq 
\widetilde{C}_{7,n}t^{-2n}
\|(I_nu_0,I_nu_1)\|_{D(A^{n})\times D(A^{n})}^2. 
\end{align*}
Moreover, since 
$J_nu$ satisfies $(J_nu,J_nu')(0)=(J_nu_0,J_nu_1)\in D(\mathcal{L}^{2n})$,
Lemma \ref{lem:regularity1} gives 
for $t\geq 1$, 
\begin{align*}
\left\|A^{n}J_nu(t)\right\|^2
+
t
E(A^{n}J_nu;t)
&\leq C_{6,2n}t^{-2n}
\|(J_nu_0,J_nu_1)\|_{D(\mathcal{L}^{2n})}^2.
\end{align*}
Furthermore, 
the boundedness of $(-tA)^ke^{-tA}$ in $t$ (for all $k$)
gives for $\ell=0,\ldots, n-1$, 
\begin{align*}
\|A^{n}J_n v_\ell(t)\|^2
+t
E(A^{n}J_nv_\ell;t)
\leq 
Ct^{-2n}\|(u_0,u_1)\|_{H\times H}^2
\end{align*}
for some positive constant $C$ (depending only on $m$ and $\ell$). 
These yield a reasonable estimate for $A^{n}J_n V_n(t)$. 
Combining these inequalities, we arrive at 
the desired inequality. 
The proof is complete.
\end{proof}

\section{Application to the damped wave equation}\label{sec:damped}

In this section, 
we consider the following initial-boundary value problem of 
the $N$-dimensional damped wave equation 
\begin{equation}\label{eq:Nd-ext-dw}
\begin{cases}
\pa_t^2u(x,t)-\Delta u(x,t)+\pa_t u(x,t) =0 
&\text{in}\ \Omega \times \R_+, 
\\
u(x,t) =0 
&\text{on}\ \pa\Omega \times \R_+, 
\\
(u,\pa_tu)(x,0)=(u_0(x),u_1(x))
&\text{in}\ \Omega, 
\end{cases}
\end{equation}
where $\Omega$ is either the whole space $\R^N$ or 
an exterior (connected) domain with a smooth boundary $\pa\Omega$.
We shall discuss several properties of 
weak solutions to \eqref{eq:Nd-ext-dw} 
as an application of the asymptotic expansion in Theorem \ref{intro:thm:abst-expansion}
under the assumption 
\begin{equation}\label{ass:ini}
(u_0,u_1)\in \mathbf{E}(\Omega)\cap \mathbf{L}^q(\Omega)
=(H_0^1(\Omega)\cap L^q(\Omega))\times (L^2(\Omega)\cap L^q(\Omega)) 
\end{equation}
with $q\in [1,2)$. Here we fix 
\[
v_{00}(x)=u_0(x)+u_1(x), \quad x\in\Omega.
\] 
For the Cauchy problem ($\Omega=\R^N$ and ignoring the boundary condition in \eqref{eq:Nd-ext-dw}), 
we use $\Delta=\Delta_{\R^N}$ (endowed with domain $D(\Delta)=H^2(\R^N)$) and the usual heat semigroup 
\[
e^{t\Delta}f(x)
=
e^{t\Delta_{\R^N}}f(x)
=
(4\pi t)^{-N/2}\int_{\R^N}e^{-\frac{|x-y|^2}{4t}}f(y)\,dy, \quad x\in \R^N.
\]
For the exterior problem, 
we use the Dirichlet heat semigroup $\{e^{t\Delta_{\Omega}}\}_{t\geq 0}$ 
which is generated by the Dirichlet Laplacian $\Delta_{\Omega}$
endowed with domain $D(\Delta_{\Omega})=H^2(\Omega)\cap H_0^1(\Omega)$, 
and we assume $0\notin \overline{\Omega}$ and 
set positive constants $r_\Omega$ and $R_\Omega$ as 
$r_\Omega=\min_{\pa\Omega}|x|$ and $R_\Omega=\max_{\pa\Omega}|x|$, that is, 
$B(0,r_\Omega)\subset \Omega^c\subset B(0,R_\Omega)$
(for the convenience put $r_{\R^N}=0$, $R_{\R^N}=\infty$). 
We discuss the large-time behavior of weak solution $u$ and also the one 
for the energy functional 
\[
E_{\Omega}(u;t)=\int_{\Omega}\Big(|\pa_tu(x,t)|^2+|\nabla u(x,t)|^2\Big)\,dx.
\]
Here we again use the same notations as in 
Definition \ref{def:asymptotics} with $A=-\Delta_{\Omega}$, that is, 
\begin{align*}
v_0(t)
&=e^{t\Delta_\Omega}v_{00}, 
\\
v_\ell(t)
&=
(-\Delta_\Omega)^\ell\left(
\sum_{j=0}^\ell
\begin{pmatrix}
2\ell\\\ell+j
\end{pmatrix}\frac{(t\Delta_\Omega)^j}{j!}e^{t\Delta_\Omega}v_{00}
-
\sum_{k=0}^{\ell-1}
\begin{pmatrix}
2\ell-1\\ \ell+k
\end{pmatrix}\frac{(t\Delta_\Omega)^k}{k!}e^{t\Delta_\Omega}u_0
\right), \quad \ell\in \N
\end{align*}
and also $V_0(t)=0$ and $V_m(t)=\sum_{\ell=0}^{m-1}v_{\ell}(t)$ for $\ell\in\N$.

\subsection{Decay estimates for $L^2$-norm and the energy functional}
The comparison principle shows that 
$|e^{t\Delta_\Omega}f|\leq e^{t\Delta_{\R^N}}|f_*|$ for any $f\in L^2(\Omega)$, 
where $f_*:\R^N\to \R$ is the zero-extension of $f$. 
This immediately gives 
the usual $L^2$-$L^q$ estimate 
\[
\|e^{t\Delta }f\|_{L^2(\Omega)}\leq Ct^{-\frac{N}{2}(\frac{1}{q}-\frac{1}{2})}
\|f\|_{L^q(\Omega)}, \quad t>0
\]
for $f\in L^q(\Omega)$ with $q\in [1,2]$. 
Therefore under the condition \eqref{ass:ini}, 
we additionally have the decay property
\[
\rev{%
\|e^{t\Delta_\Omega}v_{00}\|_{L^2(\Omega)}
}
+\|e^{t\Delta_\Omega}u_0\|_{L^2(\Omega)}
\leq 
Ct^{-\frac{N}{2}(\frac{1}{q}-\frac{1}{2})}\|(u_0,u_1)\|_{\mathbf{L}^q(\Omega)}.
\]
In this case, we can also find 
the respective decay properties for 
$u-V_m$ $(m\in \N)$. 
These can be formulated as the following. 

\begin{proposition}\label{prop:dw-whole-diffusionphenomena}
For every $q\in [1,2)$ and $m\in \Z_{\geq 0}$, 
there exists a positive constant $C_{8,q,m}$ such that 
the following assertion holds:
If $(u_0,u_1)\in \mathbf{E}(\Omega)\cap \mathbf{L}^q(\Omega)$, 
then the corresponding weak solution $u$ of \eqref{eq:Nd-ext-dw} 
satisfies  
\begin{gather}
\label{eq:lem:diffphen}
\|u(t)-V_m(t)\|_{L^2(\Omega)}^2+tE_{\Omega}(u-V_m;t)
\leq C_{8,q,m}
t^{-N(\frac{1}{q}-\frac{1}{2})-2m}
\|(u_0,u_1)\|_{\mathbf{E}(\Omega)\cap \mathbf{L}^q(\Omega)}^2, 
\quad t\geq 1, 
\end{gather}
where the family $\{V_m(\cdot)\}_{m\in\Z_{\geq 0}}$ is given 
in Definition \ref{def:asymptotics} with $A=-\Delta_{\Omega}$.
\end{proposition}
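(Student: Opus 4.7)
The strategy is to combine the abstract asymptotic expansion of Theorem \ref{intro:thm:abst-expansion} (which controls a high-order remainder in the $\mathcal{H}$-norm of the data) with the standard $L^q$-$L^2$ smoothing estimate for the Dirichlet heat semigroup (which extracts the extra $t^{-N(1/q-1/2)/2}$ decay from the $L^q$-integrability of the data). Fix an integer $n \geq m$ large enough so that $2n \geq 2m + N(\tfrac{1}{q}-\tfrac{1}{2})$; for instance $n = m + \lceil N/4 \rceil$ suffices. Applying Theorem \ref{intro:thm:abst-expansion} at this level $n$ with $A = -\Delta_\Omega$, and using the equivalence of norms $\|\cdot\|_{\mathcal{H}} \simeq \|\cdot\|_{\mathbf{E}(\Omega)}$, we obtain
\[
\|u(t)-V_n(t)\|_{L^2(\Omega)}^2 + t\,E_\Omega(u-V_n;t) \leq C\,t^{-2n}\|(u_0,u_1)\|_{\mathbf{E}(\Omega)}^2 \leq C\,t^{-2m-N(1/q-1/2)}\|(u_0,u_1)\|_{\mathbf{E}(\Omega)}^2
\]
for $t \geq 1$. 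This settles the high-order tail.

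It remains to estimate the intermediate profiles $V_n - V_m = \sum_{\ell=m}^{n-1} v_\ell$ along with their time and space gradients. Inspecting Definition \ref{def:asymptotics}, every summand is a finite linear combination of terms of the form $A^\ell (tA)^j e^{-tA} w$ with $w \in \{u_0+u_1,u_0\}$ and $0 \leq j \leq \ell$, which we rewrite as $t^{-\ell}\bigl[(tA)^{\ell+j}e^{-tA/2}\bigr]\, e^{-tA/2} w$. By the spectral theorem the bracketed operator has $L^2$-norm bounded by $\sup_{\lambda \geq 0}\lambda^{\ell+j}e^{-\lambda/2} < \infty$ uniformly in $t>0$, while the $L^q$-$L^2$ heat-semigroup estimate recalled before the statement gives $\|e^{-tA/2}w\|_{L^2} \leq C t^{-\frac{N}{2}(1/q-1/2)}\|w\|_{L^q}$. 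Consequently $\|v_\ell(t)\|_{L^2(\Omega)}^2 \leq C_\ell t^{-2\ell - N(1/q-1/2)}\|(u_0,u_1)\|_{\mathbf{L}^q(\Omega)}^2$. The same factorization with an extra $A^{1/2}$ absorbed into the bracket yields $\|A^{1/2}v_\ell(t)\|_{L^2}^2 \leq C t^{-2\ell-1-N(1/q-1/2)}\|(u_0,u_1)\|_{\mathbf{L}^q}^2$, and direct differentiation of the polynomial-exponential form (each $\partial_t$ either lowers a polynomial degree by one or produces an extra factor of $-A$, in either case contributing an additional $t^{-1}$ after re-absorption) gives $\|\partial_t v_\ell(t)\|_{L^2}^2 \leq C t^{-2\ell-2-N(1/q-1/2)}\|(u_0,u_1)\|_{\mathbf{L}^q}^2$. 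Hence $tE_\Omega(v_\ell;t) \leq C t^{-2\ell - N(1/q-1/2)}\|(u_0,u_1)\|_{\mathbf{L}^q}^2$, and summing $\ell = m,\ldots,n-1$ (the worst term being $\ell=m$) yields
\[
\|V_n(t)-V_m(t)\|_{L^2(\Omega)}^2 + t\,E_\Omega(V_n-V_m;t) \leq C\,t^{-2m - N(1/q-1/2)}\|(u_0,u_1)\|_{\mathbf{L}^q(\Omega)}^2.
\]

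Combining the two contributions by the triangle inequality (for both the $L^2$-norm and the energy functional) delivers \eqref{eq:lem:diffphen}. The most delicate point is the bookkeeping for the energy norm of $V_n - V_m$, since this difference is \emph{not} a solution of the damped wave equation and the identities of Lemma \ref{lem:identities} cannot be applied to it directly. However, because each $v_\ell$ is given by an \emph{explicit} polynomial-exponential expression in $-tA$, all estimates reduce cleanly to the elementary spectral-calculus bound that $\lambda^k e^{-\lambda/2}$ is bounded on $[0,\infty)$ together with the scalar $L^q$-$L^2$ smoothing of $\{e^{t\Delta_\Omega}\}_{t\geq 0}$. No additional difficulty arises in treating the whole-space case and the exterior case simultaneously, since both are subsumed by the abstract framework.
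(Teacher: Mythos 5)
Your proposal is correct and follows essentially the same route as the paper: apply Theorem \ref{intro:thm:abst-expansion} at a higher level $n>m$ chosen so that $2(n-m)\geq N(\tfrac1q-\tfrac12)$, and then bound the intermediate profiles $v_\ell$ ($m\leq \ell\leq n-1$) by factoring out a half-time heat semigroup and using the $L^q$-$L^2$ smoothing estimate together with the uniform boundedness of $(tA)^k e^{-tA/2}$. The paper's choice of the shift ($m_0$ with $\tfrac N2(\tfrac1q-\tfrac12)<m_0\leq\tfrac N2(\tfrac1q-\tfrac12)+1$ versus your $\lceil N/4\rceil$) and your more explicit spectral-calculus bookkeeping for $E_\Omega(v_\ell;t)$ are only cosmetic differences.
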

\begin{proof}
Fix $m_0\in\N$ satisfying 
\[
\frac{N}{2}\left(\frac{1}{q}-\frac{1}{2}\right)< m_0
\leq \frac{N}{2}\left(\frac{1}{q}-\frac{1}{2}\right) +1.
\]
Applying Theorem \ref{intro:thm:abst-expansion} with $n=m+m_0$, we have
\begin{align*}
\left\|
u(t)-V_{m+m_0}(t)
\right\|_{L^2(\Omega)}^2
+
tE_{\Omega}\left(u-V_{m+m_0};t\right)
&\leq C_{1,m+m_0}t^{-2m_0-2m}\|(u_0,u_1)\|_{\mathbf{E}(\Omega)}^2
\\
&\leq C_{1,m+m_0}t^{-N(\frac{1}{q}-\frac{1}{2})-2m}\|(u_0,u_1)\|_{\mathbf{E}(\Omega)}^2.
\end{align*}
On the one hand, note that  $V_{m+m_0}-V_{m}=\sum_{\ell=m}^{m+m_0-1}v_\ell$ 
and for any $\ell=m,m+1, \cdots,m+m_0-1$, we have
\begin{align*}
\left\|
v_\ell(t)
\right\|_{L^2(\Omega)}^2
+
tE_{\Omega}\left(v_\ell;t\right)
&\leq Ct^{-2\ell}
\Big(
\|e^{\frac{t}{2}\Delta_{\Omega}}v_{00}\|_{L^2(\Omega)}^2
+\|e^{\frac{t}{2}\Delta_{\Omega}}u_0\|_{L^2(\Omega)}^2
\Big)
\\
&\leq Ct^{-N(\frac{1}{q}-\frac{1}{2})-2\ell}
\Big(\|(u_0,u_1)\|_{\mathbf{L}^q(\Omega)}^2\Big).
\end{align*}
Combining these estimates, we can reach the desired inequality.
\end{proof}

\subsubsection{Optimality of decay rates for $\Omega=\R^N$ $(N\in\N)$}
Proposition \ref{prop:dw-whole-diffusionphenomena} with $m=0$ actually provides
the optimal decay rate for the $L^2$-norm and the energy
of weak solutions with general initial data 
except the case of two-dimensional exterior domain, 
which will be discussed later.

The following proposition gives the optimality of the decay rate 
for the case $\Omega=\R^N$ $(N\in \N)$.

\begin{proposition}\label{prop:optimality1}
Let $\Omega=\R^N (N\in\N)$ 
and let $u$ be the weak solution of \eqref{eq:Nd-ext-dw} 
with $(u_0,u_1)\in \mathbf{E}(\R^N)\cap \mathbf{L}^1(\R^N)$. 
Then the following assertions hold:
\begin{itemize}
\item[\bf (i)]If 
$\int_{\R^N}(u_0+u_1)\,dx\neq 0$, 
then 
the following quantities are both finite and strictly positive:
\[
\limsup_{t\to\infty} \Big(t^{\frac{N}{2}}\|u(t)\|_{L^2(\R^N)}^2\Big), 
\quad 
\limsup_{t\to\infty} \Big(t^{\frac{N}{2}+1}E_{\R^N}(u;t)\Big).
\]
\item[\bf (ii)]If $u_1=-u_0$ and 
$\int_{\R^N}u_0\,dx\neq 0,$
then 
the following quantities are both finite and strictly positive:
\[
\limsup_{t\to\infty} \Big(t^{\frac{N}{2}+2}\|u(t)\|_{L^2(\R^N)}^2\Big), 
\quad 
\limsup_{t\to\infty} \Big(t^{\frac{N}{2}+3}E_{\R^N}(u;t)\Big).
\]
\end{itemize}
\end{proposition}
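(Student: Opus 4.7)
The plan is to prove the upper and lower bounds separately; the upper bounds fall out of Proposition \ref{prop:dw-whole-diffusionphenomena} with $q=1$, while the lower bounds reduce, via reverse-triangle arguments, to the explicit Fourier/heat-kernel decay of the appropriate leading profile.

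\textbf{Upper bounds.} For part (i), I would apply Proposition \ref{prop:dw-whole-diffusionphenomena} with $m=0$, $q=1$; since $V_0 = 0$, this immediately gives $\|u(t)\|_{L^2(\R^N)}^2 + tE_{\R^N}(u;t) \leq C t^{-N/2}\|(u_0,u_1)\|_{\mathbf{E}(\R^N)\cap\mathbf{L}^1(\R^N)}^2$. For part (ii), the key observation is that $u_1 = -u_0$ forces $v_{00} = u_0+u_1 = 0$, hence $v_0 = e^{t\Delta}v_{00} = 0$ and $V_1 = 0$; applying the same proposition with $m=1$, $q=1$ then gives $\|u(t)\|_{L^2}^2 = \|u-V_1\|_{L^2}^2 \leq Ct^{-N/2-2}$ and $E_{\R^N}(u;t) = E_{\R^N}(u-V_1;t) \leq Ct^{-N/2-3}$.

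\textbf{Lower bound in case (i).} The leading profile is $v_0(t) = e^{t\Delta}(u_0+u_1)$. By Plancherel and the change of variables $\eta = \sqrt{2t}\,\xi$,
\begin{equation*}
t^{N/2}\|v_0(t)\|_{L^2}^2 \;=\; 2^{-N/2}\int_{\R^N} e^{-|\eta|^2}\bigl|\widehat{u_0+u_1}(\eta/\sqrt{2t})\bigr|^2\,d\eta .
\end{equation*}
Because $u_0+u_1 \in L^1(\R^N)$, the Fourier transform is continuous and uniformly bounded by $\|u_0+u_1\|_{L^1}$, so dominated convergence sends the right-hand side to $2^{-N/2}\pi^{N/2}\bigl|\int (u_0+u_1)\bigr|^2 > 0$. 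The identical computation with $|\xi|^2$, respectively $|\xi|^4$, inserted in the integrand (using $\pa_t v_0 = \Delta v_0$ so $\|\pa_t v_0\|_{L^2}^2 = \|\Delta v_0\|_{L^2}^2$) yields $\lim t^{N/2+1}E_{\R^N}(v_0;t) = c_N'|\int(u_0+u_1)|^2 > 0$, with the $\|\nabla v_0\|_{L^2}^2$ term dominating the $\|\Delta v_0\|_{L^2}^2$ term by a factor of $t$. On the other hand, Proposition \ref{prop:dw-whole-diffusionphenomena} with $m=1$, $q=1$ gives $\|u-v_0\|_{L^2}^2 + tE_{\R^N}(u-v_0;t) \leq Ct^{-N/2-2}$, which is strictly subdominant in both quantities. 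Since $\sqrt{E_{\R^N}(\cdot;t)}$ is a seminorm (it is the $L^2$-norm of the pair $(\pa_t\cdot,\nabla\cdot)$), the reverse triangle inequality applied to $v_0$ and $u-v_0$ transfers the positive lower bound to $u$.

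\textbf{Lower bound in case (ii) and main difficulty.} With $v_{00} = 0$, the formula in Definition \ref{def:asymptotics} collapses for $\ell=1$ to $v_1(t) = -Ae^{-tA}u_0 = \Delta e^{t\Delta}u_0$, and $V_2(t) = v_0(t)+v_1(t) = v_1(t)$. Since $\widehat{v_1}(\xi) = -|\xi|^2 e^{-t|\xi|^2}\widehat{u_0}(\xi)$ with $\widehat{u_0}(0) = \int u_0 \neq 0$, the same change-of-variables argument gives $\lim t^{N/2+2}\|v_1(t)\|_{L^2}^2 > 0$ and $\lim t^{N/2+3}E_{\R^N}(v_1;t) > 0$, while Proposition \ref{prop:dw-whole-diffusionphenomena} with $m=2$, $q=1$ yields $\|u-V_2\|_{L^2}^2 + tE_{\R^N}(u-V_2;t) \leq Ct^{-N/2-4}$, again of strictly smaller order; the same reverse-triangle argument concludes. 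The main technical point is verifying the exact leading constants in the Fourier integrals via dominated convergence, but this is routine once the $L^1$ hypothesis gives uniform boundedness of $\widehat{u_0+u_1}$ and $\widehat{u_0}$ and the Gaussian factors $|\eta|^{2k}e^{-|\eta|^2}$ provide integrable majorants; the structural input that does the real work is the gap between the decay rate of the leading profile and the improved decay of the remainder guaranteed by Proposition \ref{prop:dw-whole-diffusionphenomena}.
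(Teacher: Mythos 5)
Your proposal is correct, but the lower-bound argument takes a genuinely different route from the paper's. You compute the decay of the leading profile explicitly via Plancherel and dominated convergence, obtaining actual limits $\lim_{t\to\infty} t^{N/2}\|v_0(t)\|_{L^2}^2 = c\,|\int(u_0+u_1)\,dx|^2$ (and analogously for $\nabla v_0$, $v_1$, $\nabla v_1$), and then transfer positivity to $u$ by the reverse triangle inequality against the remainder bound from Proposition \ref{prop:dw-whole-diffusionphenomena} with $m=1$ (resp.\ $m=2$). The paper instead deliberately avoids Fourier analysis: it proves the lower bound for $\|\nabla v_0(t)\|_{L^2}$ by the test function method (multiplying the heat equation by a rescaled cutoff $\zeta_\rho$, extracting a logarithmic lower bound for an integral functional $Y$, and invoking the Nash--Gagliardo--Nirenberg inequality of Lemma \ref{lem:Nash-GN}), and in part (ii) it transfers the bound from $\nabla e^{t\Delta}u_0$ to $v_1=\Delta e^{t\Delta}u_0$ via the interpolation inequality $\|\nabla f\|_{L^2}^3\leq\|f\|_{L^2}^2\|\nabla\Delta f\|_{L^2}$ rather than recomputing on the Fourier side. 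Your method is more elementary and sharper on $\R^N$ (it yields genuine limits, not merely positive limsups), but it is tied to the flat Laplacian; the paper's machinery is chosen precisely so that the same scheme carries over verbatim to exterior domains in Propositions \ref{prop:optimality3} and \ref{prop:optimality5}, where one replaces the constant test function by the Dirichlet harmonic function $h_\Omega$ and no Fourier representation is available. The only blemish in your write-up is the normalization of the limiting constant ($\widehat{f}(0)$ versus $\int f$, which differ by a power of $2\pi$ depending on the Fourier convention), but this does not affect positivity and hence not the argument.
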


To prove this, we use the following Sobolev-type inequality. 

\begin{lemma}\label{lem:Nash-GN}
There exists a positive constant $C_9$ such that 
for every $f\in H_0^1(\Omega)\cap L^1(\Omega)$, 
\[
\|f\|_{L^{1+\frac{2}{N}}(\Omega)}^{1+\frac{2}{N}}
\leq 
C_9\|f\|_{L^{1}(\Omega)}^{\frac{N^2+4}{N(N+2)}}
\|\nabla f\|_{L^{2}(\Omega)}^{\frac{4}{N+2}}.
\]
\end{lemma}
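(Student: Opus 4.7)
The plan is to derive this inequality as a direct instance of the classical Gagliardo--Nirenberg interpolation theorem. Since $f\in H_0^1(\Omega)$, approximation by $C_0^\infty(\Omega)$ functions shows that the zero-extension $\tilde f:\R^N\to\R$ lies in $H^1(\R^N)\cap L^1(\R^N)$, with $\|\nabla\tilde f\|_{L^2(\R^N)}=\|\nabla f\|_{L^2(\Omega)}$ and $\|\tilde f\|_{L^p(\R^N)}=\|f\|_{L^p(\Omega)}$ for every $p\in[1,\infty]$. Hence it suffices to prove the estimate for functions defined on the whole space.

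On $\R^N$, I would apply the Gagliardo--Nirenberg inequality
\[
\|g\|_{L^{1+2/N}(\R^N)}\leq C\|\nabla g\|_{L^2(\R^N)}^{\theta}\|g\|_{L^1(\R^N)}^{1-\theta}
\]
with $\theta=\tfrac{4N}{(N+2)^2}$, the value dictated by the scaling constraint $\tfrac{1}{1+2/N}=\theta\bigl(\tfrac{1}{2}-\tfrac{1}{N}\bigr)+(1-\theta)$. Raising the inequality to the $(1+2/N)$-th power produces exponents $\theta(1+\tfrac{2}{N})=\tfrac{4}{N+2}$ on $\|\nabla g\|_{L^2}$ and $(1-\theta)(1+\tfrac{2}{N})=\tfrac{N^2+4}{N(N+2)}$ on $\|g\|_{L^1}$, which match the statement exactly. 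A quick check on the two-parameter scaling $g(x)\mapsto \lambda g(\mu x)$ confirms that both the amplitude and length homogeneities are satisfied, so these are the only admissible exponents.

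If one prefers to avoid invoking the full Gagliardo--Nirenberg theorem, an equivalent route is as follows. For $N\geq 2$, combine the Nash inequality $\|g\|_{L^2}^{2+4/N}\leq C\|g\|_{L^1}^{4/N}\|\nabla g\|_{L^2}^{2}$ (provable by a low/high-frequency splitting of $\widehat g$) with the H\"older interpolation $\|g\|_{L^{1+2/N}}\leq \|g\|_{L^1}^{(N-2)/(N+2)}\|g\|_{L^2}^{4/(N+2)}$; substituting the Nash bound into the $(1+2/N)$-th power of the H\"older estimate produces precisely the stated exponents. For $N=1$, one instead interpolates $L^3$ between $L^1$ and $L^\infty$ via $\|g\|_{L^{\infty}(\R)}\leq C\|g\|_{L^2(\R)}^{1/2}\|g'\|_{L^2(\R)}^{1/2}$ and closes with the one-dimensional Nash inequality. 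The only obstacle is bookkeeping of the exponents; there is no analytic difficulty beyond the well-established inequalities cited, which is why the proof in the paper will presumably reduce to a one-line citation together with the scaling check.
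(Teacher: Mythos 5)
Your proposal is correct, and your ``equivalent route'' (Nash's inequality combined with the H\"older interpolation $\|f\|_{L^{1+2/N}}\leq \|f\|_{L^1}^{(N-2)/(N+2)}\|f\|_{L^2}^{4/(N+2)}$ for $N\geq 2$, plus a one-dimensional Gagliardo--Nirenberg step for $N=1$) is precisely the paper's proof; the exponent bookkeeping you describe checks out. Your primary route via a single citation of the full Gagliardo--Nirenberg theorem with $\theta=\frac{4N}{(N+2)^2}$, after zero extension from $H_0^1(\Omega)$ to $H^1(\R^N)$, is an equally valid packaging of the same facts.
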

\begin{proof}
If $N=2$, then
this is nothing but the Nash inequality 
\begin{equation}
\label{eq:nash}
\|f\|_{L^2(\Omega)}^{2+\frac{4}{N}}\leq C_{\rm Nash}\|f\|_{L^1(\Omega)}^\frac{4}{N}\|\nabla f\|_{L^2(\Omega)}^2.
\end{equation}
If $N\geq 3$,
then we just adapt \eqref{eq:nash} with the H\"older inequality 
$
\|f\|_{L^{1+\frac{2}{N}}(\Omega)}\leq 
\|f\|_{L^1(\Omega)}^{\frac{N-2}{N+2}}\|f\|_{L^2(\Omega)}^{\frac{4}{N+2}}$.
If $N=1$, then we combine \eqref{eq:nash} with the Gagliardo--Nirenberg inequality of the form
$\|f\|_{L^{3}(\R)}
\leq C_{\rm GN}\|f\|_{L^2(\R)}^{\frac{5}{6}}
\|f'\|_{L^2(\R)}^{\frac{1}{6}}$.
The proof is complete.
\end{proof}

\begin{proof}[Proof of Proposition \ref{prop:optimality1}]
{\bf (i)}
First note that Proposition \ref{prop:dw-whole-diffusionphenomena} with 
$(q,m)=(1,0),(1,1)$ gives that 
\[
t^{\frac{N}{2}}\|u(t)\|_{L^2(\R^N)}^2
+t^{\frac{N}{2}+1}E_{\R^N}(u;t)
\leq C_{8,1,0}
\|(u_0,u_1)\|_{\mathbf{E}(\R^N)\cap \mathbf{L}^1(\R^N)}^2,
\quad t\geq 1
\]
and 
\[
t^{\frac{N}{2}+1}E_{\R^N}(u-v_0;t)
\leq C_{8,1,1}t^{-2}
\|(u_0,u_1)\|_{\mathbf{E}(\R^N)\cap \mathbf{L}^1(\R^N)}^2.
\quad t\geq 1,
\]
where $v_0(t)=e^{t\Delta_{\R^N}}(u_0+u_1)$. 
The first inequality assures that both quantities in the assertion is finite.
Moreover, we see that 
\[
t^{\frac{N}{2}+1}\|\nabla v_0(t)\|_{L^2(\R^N)}^2
\leq 
2t^{\frac{N}{2}}\|v_0(t/2)\|_{L^2(\R^N)}^2.
\]
Therefore it suffices to show only the lower bound for $\|\nabla v_0(t)\|_{L^2(\R^N)}$. 
Here we employ the strategy of 
the test function method in Ikeda--Sobajima \cite{IkSo2019NA}. 
Fix a non-increasing function $\eta\in C^\infty(\R)$ satisfying 
$\mathbbm{1}_{(-\infty,\frac{1}{2}]}\leq \eta\leq \mathbbm{1}_{(-\infty,1]}$ 
and set 
for the parameter $\rho>0$, 
\[
\zeta_\rho(x,t)=\eta\big(\xi_\rho(x,t)\big), \quad \xi_\rho(x,t)=\frac{|x|^2+t}{\rho},
\]
where $\mathbbm{1}_D$ denotes the indicator function on $D$.
Since the dominated convergence theorem gives 
\[
\lim_{\rho\to \infty}\int_{\R^N}(u_0+u_1)\zeta_\rho\,dx
=\int_{\R^N}(u_0+u_1)\,dx\neq 0, 
	\]
we can choose a positive constant $\rho_*$ such that for every $\rho\geq \rho_*$, 
\[
\left|\int_{\R^N}(u_0+u_1)\zeta_\rho\,dx\right|\geq c_0=\frac{1}{2}\left|\int_{\R^N}(u_0+u_1)\,dx\right|>0.
\]
Multiplying the equation $\pa_tv_0-\Delta v_0=0$ to $\zeta_{\rho}$ 
and integrating it over $\R^N$, we see 
from integration by parts that 
\begin{align*}
0=
\int_{\R^N}(\pa_tv_0-\Delta v_0)\zeta_\rho\,dx
=\frac{d}{dt}\int_{\R^N}v_0\zeta_\rho\,dx
-\int_{\R^N}v_0(\pa_t \zeta_\rho+\Delta \zeta_\rho)\,dx.
\end{align*}
Integrating it again over $\R_+$ 
and using the H\"older inequality 
(and putting $\mathcal{Q}=\R^N\times \R_+$), we 
have
\begin{align*}
\left|\int_{\R^N}(u_0+u_1)\zeta_\rho\,dx\right|^{1+\frac{2}{N}}
&=\left|\iint_{\mathcal{Q}}v_0(\pa_t \zeta_\rho+\Delta \zeta_\rho)\,dxdt\right|^{1+\frac{2}{N}}
\\
&\leq 
\left(\iint_{\mathcal{Q}}|\pa_t \zeta_\rho+\Delta \zeta_\rho|^{\frac{N}{2}+1}\,dxdt\right)^{\frac{2}{N}} \rho Y'(\rho), 
\end{align*}
where we have set
\[
Y(T)=\int_{0}^T\left(\iint_{\mathcal{Q}}|v_0|^{1+\frac{2}{N}}\mathbbm{1}_{[\frac{1}{2},1]}(\xi_\rho)\,dxdt\right)\,\frac{d\rho}{\rho}
\quad
\left(\leq \log 2\int_0^T\|v_0(t)\|_{L^{1+\frac{2}{N}}(\R^N)}^{1+\frac{2}{N}}\,dt\right).
\]
Noting that 
\[
\iint_{\mathcal{Q}}|\pa_t \zeta_\rho+\Delta \zeta_\rho|^{\frac{N}{2}+1}\,dxdt\leq C
\]
for some positive constant $C$ independent of $\rho$, 
we arrive at the following lower bound 
for $Y$ with the logarithmic function:
\[
Y(\rho_*)
+\frac{c_0^{1+\frac{2}{N}}}{C^{\frac{2}{N}}}\log \frac{T}{\rho_*}
\leq 
Y(T)
\leq 
\log 2\int_0^T\|v_0(t)\|_{L^{1+\frac{2}{N}}(\R^N)}^{1+\frac{2}{N}}\,dt, \quad T>\rho_*.
\]
Applying Lemma \ref{lem:Nash-GN}, we deduce
\[
\int_0^T\|\nabla v_0(t)\|_{L^2(\R^N)}^{\frac{4}{N+2}}\,dt
\geq \delta\log T,
\quad T>\rho_*
\]
for some positive constant $\delta>0$.
This gives
$\limsup\limits_{t\to \infty}\big(t^{\frac{N}{2}+1}\|\nabla v_0(t)\|_{L^2(\R^N)}^2\big)>0$. 

{\bf (ii)} 
In this case, the first asymptotic profile vanishes, that is, $v_0(t)=e^{t\Delta}(u_0+u_1)=0$. 
Proposition \ref{prop:dw-whole-diffusionphenomena} 
with $(q,m)=(1,1),(1,2)$ gives that 
\[
t^{\frac{N}{2}+2}\|u(t)\|_{L^2(\R^N)}^2
+t^{\frac{N}{2}+3}E_{\R^N}(u;t)
\leq C_{8,1,1}
\|u_0\|_{H_0^1(\Omega)\cap L^1(\Omega)}^2, 
\quad t\geq 1
\]
and 
\[
t^{\frac{N}{2}+3}
E_{\R^N}(u-v_1;t)
\leq C_{8,1,2}t^{-2}\|u_0\|_{H_0^1(\Omega)\cap L^1(\Omega)}^2, 
\quad t\geq 1
\]
and therefore the quantities in the assertion are both finite.
Observe that the second asymptotic profile of $u$ is given by $v_1(t)=\Delta e^{t\Delta }u_0$. 
As in the proof of {\bf (i)}, we can find
\[
\limsup_{t\to\infty} \Big(t^{\frac{N}{2}+1}\|\nabla e^{t\Delta}u_0\|_{L^2(\R^N)}^2\Big)>0.
\]
Using the interpolation estimate 
$\|\nabla f\|_{L^2(\R^N)}^3\leq \|f\|_{L^2(\R^N)}^2
\|\nabla \Delta f\|_{L^2(\R^N)}$ and the $L^2$-$L^1$ estimate, we also have 
\begin{align*}
\Big(t^{\frac{N}{2}+1}\|\nabla e^{t\Delta}u_0\|_{L^2(\R^N)}^2\Big)^{3}
&\leq 
\Big(t^{\frac{N}{4}}\|e^{t\Delta}u_0\|_{L^2(\R^N)}\Big)^{4}
\Big(t^{\frac{N}{2}+3}\|\nabla \Delta e^{t\Delta}u_0\|_{L^2(\R^N)}^{2}\Big)
\\
&\leq 
C\|u_0\|_{L^1(\R^N)}^{4}
\Big(t^{\frac{N}{2}+3}\|\nabla v_1(t)\|_{L^2(\R^N)}^{2}\Big).
\end{align*}
Therefore we find that 
$t^{\frac{N}{2}+3}\|\nabla v_1(t)\|_{L^2(\R^N)}^2$ 
and also 
$t^{\frac{N}{2}+2}\|v_1(t)\|_{L^2(\R^N)}^2$ are bounded below by positive constants. 
The proof is complete.
\end{proof}

\begin{remark}
The assumption $u_1=-u_0$ in Proposition \ref{prop:optimality1} {\bf (ii)} 
can be weakened in the following sense.
Roughly speaking, a similar phenomenon occurs 
when $e^{t\Delta}(u_0+u_1)$ decays faster than $\Delta e^{t\Delta}u_0$.
For example, if 
$u_0\in H^1(\R^N)\cap L^1(\R^N)$ with $\int_{\R^N}u_0\,dx\neq 0$
and 
$u_1=-u_0+\Delta^2 w_0$ for some $w_0\in H^4(\R^N)\cap L^1(\R^N)$, 
then the profiles $v_0(t)=e^{t\Delta}(u_0+u_1)$ and 
$v_1(t)=\Delta e^{t\Delta}u_0-(2+t\Delta) \Delta e^{t\Delta}(u_0+u_1)$ 
satisfy
\begin{gather*}
\|v_0(t)\|_{L^2(\R^N)}^2=O(t^{-\frac{N}{2}-4}), 
\quad 
E_{\R^N}(v_0;t)=O(t^{-\frac{N}{2}-5}),
\\
E_{\R^N}(v_1;t)=\|\nabla \Delta e^{t\Delta}u_0\|_{L^2(\R^N)}^2+O(t^{-\frac{N}{2}-5}).
\end{gather*}
However, the same argument as in the proof of Proposition \ref{prop:optimality1} 
shows that the following quantities for $V_1$ are both finite and strictly positive:
\begin{align*}
\limsup_{t\to\infty}
\Big(t^{\frac{N}{2}+2}
\|\Delta e^{t\Delta}u_0\|_{L^2(\R^N)}^2\Big), 
\quad
\limsup_{t\to\infty}
\Big(t^{\frac{N}{2}+3}\|\nabla \Delta e^{t\Delta}u_0\|_{L^2(\R^N)}^2\Big).
\end{align*}
In this case, actually we could check that 
the asymptotic behavior of $u$ is clarified as $\Delta e^{t\Delta}u_0$ which differs from the usual one $v_0(t)=e^{t\Delta}(u_0+u_1)$.
This phenomenon occurs also for the exterior problems.
\end{remark}

\subsubsection{Optimality of decay rates for the exterior problem ($N\geq 3$)}

In this case we employ the test function method 
with positive harmonic functions satisfying the Dirichlet boundary condition. 
The following lemma describes
the asymptotic behavior of 
the positive harmonic function at spatial infinity similar to 
$1-|x|^{2-N}$. 
The construction is well-known (see e.g., Jost \cite[Section 22]{JostBook}), but for the reader's convenience, 
we give a sketch of the proof.

\begin{lemma}\label{lem:harmonicsNd}
Let $\Omega$ be an exterior domain in $\R^N$ $(N\geq 3)$.
Then there exists a unique solution $h_\Omega\in C(\overline{\Omega})\cap C^\infty(\Omega)$ of the problem 
\begin{equation}\label{harmonic}
\begin{cases}
\Delta h_\Omega(x)=0, 
& x\in \Omega,
\\
h_\Omega (x)=0, 
& x\in \pa\Omega,
\\
h_\Omega(x)>0, 
& x\in \Omega
\end{cases}
\end{equation}
satisfying $\lim\limits_{|x|\to\infty} h_\Omega(x)=1$. Moreover, $h_\Omega$
satisfies 
\begin{gather*}
|\nabla h_\Omega(x)|
\leq \frac{2^{N-1}NR_{\Omega}^{N-2}}{|x|^{N-1}}, \quad x\in B(0,2R_\Omega)^c.
\end{gather*}
\end{lemma}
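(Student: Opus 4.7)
The plan is to reformulate the problem in terms of $g = 1 - h_\Omega$, which should satisfy $\Delta g = 0$ in $\Omega$, $g=1$ on $\pa\Omega$, and $g(x)\to 0$ as $|x|\to\infty$. The advantage is that $g$ is a bounded solution whose decay at infinity controls everything, and it can be obtained as a monotone limit of solutions on bounded smooth domains. Specifically, for each $R>R_\Omega$ I would let $g_R$ be the unique classical solution (standard Dirichlet theory on $\Omega_R:=\Omega\cap B(0,R)$) of
\[
\Delta g_R=0\text{ in }\Omega_R,\qquad g_R=1\text{ on }\pa\Omega,\qquad g_R=0\text{ on }\pa B(0,R).
\]
By the maximum principle $0\le g_R\le 1$, and comparing $g_R$ with $g_{R'}$ (for $R'>R$) on $\pa\Omega_R$ shows the family $\{g_R\}$ is monotone non-decreasing in $R$. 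Hence $g(x):=\lim_{R\to\infty}g_R(x)$ exists, is harmonic in $\Omega$ by standard convergence of harmonic functions, satisfies $0\le g\le 1$, and equals $1$ on $\pa\Omega$ by smoothness of the boundary (boundary regularity of the Dirichlet problem).

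The decisive step is showing $g(x)\to 0$ at infinity. For this I would compare $g_R$ with the explicit radial barrier
\[
\phi_R(x)=\frac{|x|^{2-N}-R^{2-N}}{R_\Omega^{2-N}-R^{2-N}},
\]
which is harmonic on the annular region $\mathcal{A}_R=\{R_\Omega\le |x|\le R\}\subset \Omega_R$, equals $1$ at $|x|=R_\Omega$ and $0$ at $|x|=R$. Since $g_R\le 1=\phi_R$ on $\{|x|=R_\Omega\}$ and $g_R=0=\phi_R$ on $\{|x|=R\}$, the maximum principle gives $g_R\le \phi_R$ on $\mathcal{A}_R$. Sending $R\to\infty$ yields the crucial decay estimate
\[
g(x)\le \left(\frac{R_\Omega}{|x|}\right)^{N-2},\qquad |x|\ge R_\Omega,
\]
which is where the hypothesis $N\ge 3$ enters. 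Then $h_\Omega:=1-g$ lies in $C(\overline{\Omega})\cap C^\infty(\Omega)$, is harmonic, vanishes on $\pa\Omega$, tends to $1$ at infinity, and is strictly positive in $\Omega$ by the strong maximum principle (if $g(x_0)=1$ anywhere then $g\equiv 1$, contradicting the decay at infinity). Uniqueness follows from applying the maximum principle on $\Omega_R$ to the difference of two solutions and letting $R\to\infty$.

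For the gradient bound, I would use the standard interior derivative estimate for harmonic functions: if $w$ is harmonic on $B(x,r)$, then $|\nabla w(x)|\le \frac{N}{r}\sup_{B(x,r)}|w|$. For $|x|\ge 2R_\Omega$, take $r=|x|/2$, so that $B(x,|x|/2)\subset\{|y|\ge |x|/2\ge R_\Omega\}\subset\Omega$. Combining the derivative estimate applied to $g$ with the supremum bound $\sup_{B(x,|x|/2)}g\le (2R_\Omega/|x|)^{N-2}$ obtained from the decay inequality above yields
\[
|\nabla h_\Omega(x)|=|\nabla g(x)|\le \frac{2N}{|x|}\cdot\left(\frac{2R_\Omega}{|x|}\right)^{N-2}=\frac{2^{N-1}NR_\Omega^{N-2}}{|x|^{N-1}},
\]
exactly the claimed bound.

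The principal technical obstacle is the passage to the limit $R\to\infty$: one must confirm that the monotone limit of harmonic functions with fixed boundary values on the inner boundary really inherits the continuous boundary trace $g=1$ on $\pa\Omega$ (here smoothness of $\pa\Omega$ and a local barrier argument at each boundary point is the standard remedy) and that the comparison with $\phi_R$ survives in the limit, forcing genuine decay. Everything else is routine: existence on bounded domains is classical Dirichlet theory, smoothness in $\Omega$ is elliptic regularity, and the gradient bound is a one-line application of the mean-value gradient estimate. No additional technicalities appear once the two-sided control $0\le g\le (R_\Omega/|x|)^{N-2}$ is established.
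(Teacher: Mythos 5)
Your argument is correct and follows essentially the same route as the paper: exhaust $\Omega$ by bounded domains, pass to a monotone limit, obtain the decay $0\le 1-h_\Omega\le (R_\Omega/|x|)^{N-2}$ by comparison with the radial harmonic function $|x|^{2-N}$, and deduce the gradient bound from the interior derivative estimate on $B(x,|x|/2)$. The only cosmetic difference is that the paper constructs the truncated approximants variationally, as minimizers of the Dirichlet energy over the convex set $\{h\in H_0^1(B(0,n)):h\ge \mathbbm{1}_{\Omega^c}\}$ (via Stampacchia's theorem), rather than by solving the classical Dirichlet problem on $\Omega\cap B(0,R)$; both constructions produce the same functions.
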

\begin{proof}
For $n\in\N$ satisfying $n>R_\Omega$,
take the unique minimizer $\widetilde{h}_n$ for 
the functional 
\[
\int_{B(0,n)}|\nabla h|^2\,dx
\]
restricted to 
the closed convex set $\left\{h\in H_0^1(B(0,n))\;;\;h\geq \mathbbm{1}_{\Omega^c}\right\}$.
Letting $n\to \infty$, 
we have the increasing limit $\widetilde{h}(x)=\lim_{n\to \infty}h_n(x)$
which satisfies
\[
\frac{r_\Omega^{N-2}}{|x|^{N-2}}
\leq \widetilde{h}(x) \leq
\frac{R_\Omega^{N-2}}{|x|^{N-2}}, \quad x\in \Omega.
\] This is verified 
via the Stampacchia's theorem and 
the maximum principle. 
Then we can see that 
the function $1-\widetilde{h}(x)$ satisfies \eqref{harmonic}.  
If $|x_0|\geq 2R_\Omega$, then 
the mean-value theorem for harmonic functions (see e.g., Evans \cite[Section 2.2]{EvansBook}) gives 
\[
|\nabla h_\Omega(x_0)|
\leq \frac{2N}{|x_0|}\|h_\Omega\|_{L^\infty(\pa B(x_0,\frac{|x_0|}{2}))}
\leq \frac{2^{N-1}NR_{\Omega}^{N-2}}{|x_0|^{N-1}}.
\]
The proof is complete.
\end{proof}
\begin{remark}
If $N=2$, then the above procedure gives $\widetilde{h}\equiv 1$ ($h_{\Omega}\equiv 0$) which is not meaningful. We will discuss the case of two-dimensional exterior domain later.
\end{remark}

The optimality of decay estimates in Proposition \ref{prop:dw-whole-diffusionphenomena} with $m=0,1$  
can be found via the use of $h_\Omega$. 
\begin{proposition}\label{prop:optimality3}
Let $\Omega$ be an exterior domain in $\R^N$ $(N\geq 3)$
and let $u$ be the weak solution of \eqref{eq:Nd-ext-dw} with 
$(u_0,u_1)\in \mathbf{E}(\Omega)\cap \mathbf{L}^1(\Omega)$. 
Let $h_{\Omega}$ be given in Lemma \ref{lem:harmonicsNd}. 
Then the following assertions hold: 
\begin{itemize}
\item[\bf (i)]
If 
$\int_{\Omega}(u_0+u_1)h_\Omega\,dx\neq 0$, 
then 
the following quantities are both finite and strictly positive:
\[
\limsup_{t\to\infty} \Big(t^{\frac{N}{2}}\|u(t)\|_{L^2(\Omega)}^2\Big), 
\quad 
\limsup_{t\to\infty} \Big(t^{\frac{N}{2}+1}E_{\Omega}(u;t)\Big).
\]
\item[\bf (ii)]
If $u_1=-u_0$
and 
$\int_{\Omega}u_0 h_{\Omega}\,dx\neq 0$, 
then 
the following quantities are both finite and strictly positive:
\[
\limsup_{t\to \infty} \Big(t^{\frac{N}{2}+2}\|u(t)\|_{L^2(\Omega)}^2\Big), 
\quad 
\limsup_{t\to \infty} \Big(t^{\frac{N}{2}+3}E_\Omega(u;t)\Big).
\]
\end{itemize}
\end{proposition}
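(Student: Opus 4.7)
The plan is to mirror the strategy of Proposition \ref{prop:optimality1}, the only new ingredient being the weighted test function $h_\Omega(x)\zeta_\rho(x,t)$ which, thanks to Lemma \ref{lem:harmonicsNd}, vanishes on $\pa\Omega$ and thereby encodes the Dirichlet condition.

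Finiteness of both $\limsup$'s in \textbf{(i)} and \textbf{(ii)} is immediate from Proposition \ref{prop:dw-whole-diffusionphenomena} applied with $(q,m)=(1,0)$ and $(1,1)$ for case \textbf{(i)}, and with $(1,1)$ and $(1,2)$ for case \textbf{(ii)} (noting $v_0\equiv 0$ when $u_1=-u_0$). For strict positivity, the $m=1$ (resp.\ $m=2$) case of the same proposition yields $\|u-v_0\|^2_{L^2(\Omega)}+tE_\Omega(u-v_0;t)=O(t^{-N/2-2})$ (resp.\ the $V_2$ analogue). The elementary inequality $\|a\|^2\geq\tfrac12\|b\|^2-2\|a-b\|^2$, together with the $L^2$-$L^1$ bound $\|\pa_t v_0\|_{L^2}=\|\Delta_\Omega v_0\|_{L^2}=O(t^{-N/4-1})$ and the smoothing estimate $\|\nabla v_0(t)\|^2_{L^2}\leq Ct^{-1}\|v_0(t/2)\|^2_{L^2}$, reduces case \textbf{(i)} to the sharp lower bound
\[
\limsup_{t\to\infty} t^{N/2+1}\|\nabla v_0(t)\|^2_{L^2(\Omega)}>0,\qquad v_0(t)=e^{t\Delta_\Omega}(u_0+u_1).
\]
Case \textbf{(ii)} reduces analogously, first to the same bound with $e^{t\Delta_\Omega}u_0$ in place of $v_0$, and then is transferred to $v_1=\Delta_\Omega e^{t\Delta_\Omega}u_0$ via the interpolation $\|\nabla f\|^3\leq\|f\|^2\|\nabla\Delta f\|$ exactly as in Proposition \ref{prop:optimality1}\textbf{(ii)}.

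For the lower bound, I would multiply the heat equation $\pa_t v_0-\Delta v_0=0$ by $h_\Omega\zeta_\rho$, integrate over $\Omega\times\R_+$, and integrate by parts twice in $x$; since $\Delta h_\Omega=0$ and both $v_0$ and $h_\Omega\zeta_\rho$ vanish on $\pa\Omega$, no boundary integrals survive and the identity reads
\[
\int_\Omega (u_0+u_1)h_\Omega\zeta_\rho(x,0)\,dx
=-\iint_{\Omega\times\R_+} v_0\Big(h_\Omega(\pa_t\zeta_\rho+\Delta\zeta_\rho)+2\nabla h_\Omega\cdot\nabla\zeta_\rho\Big)\,dxdt.
\]
Because $0\leq h_\Omega\leq 1$, dominated convergence furnishes $\rho_*>0$ and $c_0>0$ with the left-hand side bounded below by $c_0$ for $\rho\geq\rho_*$. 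Hölder with exponents $1+2/N$ and $(N+2)/2$, the $d\rho/\rho$-averaging trick of Proposition \ref{prop:optimality1}, and the uniform bound
\[
\iint_{\Omega\times\R_+}\Big|h_\Omega(\pa_t\zeta_\rho+\Delta\zeta_\rho)+2\nabla h_\Omega\cdot\nabla\zeta_\rho\Big|^{(N+2)/2}\,dxdt\leq C
\]
then produce $c_0^{1+2/N}\log(T/\rho_*)\leq C\int_0^T\|v_0(t)\|^{1+2/N}_{L^{1+2/N}(\Omega)}\,dt$. Lemma \ref{lem:Nash-GN} together with the $L^1$-contraction $\|v_0(t)\|_{L^1(\Omega)}\leq\|u_0+u_1\|_{L^1(\Omega)}$ of $\{e^{t\Delta_\Omega}\}_{t\geq 0}$ (which follows from the comparison principle with the Gaussian kernel) upgrade this to $\int_0^T\|\nabla v_0(t)\|^{4/(N+2)}_{L^2(\Omega)}\,dt\geq\delta\log T$, forcing the desired $\limsup$. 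For case \textbf{(ii)}, the same argument applied to $e^{t\Delta_\Omega}u_0$ works verbatim because the assumption $u_1=-u_0$ only kills the first profile, leaving the harmonic moment $\int_\Omega u_0 h_\Omega\,dx\neq 0$ as the hypothesis that drives the analogue of the step above.

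The main obstacle is the new cross term $2\nabla h_\Omega\cdot\nabla\zeta_\rho$, which has no counterpart in Proposition \ref{prop:optimality1}. On the shell $\{\rho/2\leq|x|^2+t\leq\rho\}$ it is bounded pointwise by $C|x|^{-(N-2)}/\rho$ through Lemma \ref{lem:harmonicsNd}, and the task is to verify that the corresponding integral $\rho^{-(N+2)/2}\iint_{\text{shell}}|x|^{-(N-2)(N+2)/2}\,dxdt$ stays bounded uniformly in $\rho$. A direct calculation using $|x|\geq r_\Omega>0$ and the shell volume $\lesssim\rho^{N/2+1}$ confirms this for every $N\geq 3$ (the relevant radial exponent $N-1-(N-2)(N+2)/2$ is $\leq -1/2$), but precisely the same computation would fail for $N=2$, reflecting the recurrent behaviour of two-dimensional Brownian motion and explaining why that case must be treated separately.
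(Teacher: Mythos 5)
Your proposal follows essentially the same route as the paper: finiteness from Proposition \ref{prop:dw-whole-diffusionphenomena}, reduction to a lower bound for $t^{N/2+1}\|\nabla v_0(t)\|_{L^2(\Omega)}^2$, and then the test function method with $h_\Omega$ times a parabolic cutoff, the $d\rho/\rho$-averaging, and Lemma \ref{lem:Nash-GN} with $h_\Omega\le 1$; part \textbf{(ii)} is transferred to $v_1$ by the same interpolation as in Proposition \ref{prop:optimality1}. The one substantive difference is the cutoff. You keep $\xi_\rho=(|x|^2+t)/\rho$, so $\nabla\zeta_\rho$ is supported on a shell that, for $t$ close to $\rho$, reaches all the way down to $\pa\Omega$; your pointwise bound $|2\nabla h_\Omega\cdot\nabla\zeta_\rho|\lesssim |x|^{-(N-2)}/\rho$ there invokes $|\nabla h_\Omega(x)|\lesssim|x|^{-(N-1)}$, which Lemma \ref{lem:harmonicsNd} only supplies for $|x|\ge 2R_\Omega$. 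Near the boundary $\nabla h_\Omega$ is indeed bounded for smooth $\pa\Omega$ (Schauder up to the boundary), but the lemma as stated gives only $h_\Omega\in C(\overline\Omega)\cap C^\infty(\Omega)$, so your computation needs an extra regularity remark to close. The paper avoids this entirely by replacing $|x|^2$ with $[(|x|-2R_\Omega)_+]^2$ in the cutoff, which forces $\nabla\widetilde\zeta_\rho$ to vanish on $\{|x|\le 2R_\Omega\}$, exactly where the gradient bound is unavailable; it also uses that $h_\Omega$ is bounded below on $\{|x|\ge 2R_\Omega\}$ to absorb the cross term into $\frac{K}{\rho}\chi(\widetilde\xi_\rho)h_\Omega$, keeping the weight $h_\Omega$ attached to the $|v_0|^{1+2/N}$ factor throughout. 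Your explicit verification that the cross-term integral is uniformly bounded for $N\ge3$ (and fails for $N=2$) is correct and is a nice addition the paper leaves implicit, but you should either adopt the paper's modified cutoff or justify the boundedness of $\nabla h_\Omega$ up to $\pa\Omega$.
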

\begin{proof}
We first note that by Proposition \ref{prop:dw-whole-diffusionphenomena},
all quantities are verified to be finite.
We would only explain the difference compared to the proof of Proposition \ref{prop:optimality1}.
For {\bf (i)}, fix $\eta$ as in the proof of Proposition \ref{prop:optimality1} and 
\[
\widetilde{\zeta}_\rho(x,t)=\eta\big(\widetilde{\xi}_\rho(x,t)\big), \quad \widetilde{\xi}_\rho(x,t)=\frac{[(|x|-2R_\Omega)_+]^2+t}{\rho}.
\]
Then we can check that 
\begin{align*}
|\pa_t(\widetilde{\zeta}_\rho h_\Omega)+\Delta (\widetilde{\zeta}_\rho h_\Omega)|
&\leq 
\big(|\pa_t\widetilde{\zeta}_\rho|+|\Delta \widetilde{\zeta}_\rho|\big)h_\Omega+2|\nabla \widetilde{\zeta}_\rho\cdot\nabla h_\Omega|
\leq
\frac{K}{\rho}\chi(\widetilde{\xi}_\rho)h_{\Omega}
\end{align*}
for some positive constant $K$. 
Using the above estimate, we can proceed the test function method 
with the functional
\[
Y_{\Omega}(T)=\int_{0}^T\left(\iint_{\Omega\times(0,\infty)}|v_0|^{1+\frac{2}{N}}\mathbbm{1}_{[\frac{1}{2},1]}(\widetilde{\xi}_\rho)h_{\Omega}\,dxdt\right)\,\frac{d\rho}{\rho}.
\]
Using Lemma \ref{lem:Nash-GN} with $h_\Omega\leq 1$, we could find the lower bound of 
$t^{\frac{N}{2}+1}\|\nabla v_0(t)\|_{L^2(\Omega)}^2$
and also 
$t^{\frac{N}{2}}\|v_0(t)\|_{L^2(\Omega)}^2$.
The proof of {\bf (ii)} is 
completely the same as the one of Proposition \eqref{prop:optimality1}. 
\end{proof}

\subsubsection{Decay estimates for the two-dimensional exterior problem}
In the two-dimensional case, 
the series of estimates in Proposition \ref{prop:dw-whole-diffusionphenomena} 
can be improved by restricting 
the choice of initial data.
To describe this, we use {\it unbounded}  
positive harmonic functions satisfying the Dirichlet boundary condition, which is the reason why the two-dimensional case 
is exceptional.

\begin{lemma}\label{lem:harmonics2d}
If $Omega$ is a two-dimensional exterior domain, 
then there exists a unique solution $h_\Omega\in C^\infty(\Omega)$ of the problem 
\begin{equation}\label{harmonic2d}
\begin{cases}
\Delta h_\Omega(x)=0, 
& x\in \Omega,
\\
h_\Omega (x)=0, 
& x\in \pa\Omega,
\\
h_\Omega(x)>0, 
& x\in \Omega
\end{cases}
\end{equation}
satisfying $\log h_B\left(\frac{|x|}{R_\Omega}\right)
\leq 
h_\Omega(x)
\leq 
\log \left(\frac{|x|}{r_\Omega}\right)$ and 
$|x|^2\nabla h_\Omega(x)-x\in L^\infty(\Omega;\R^2)$.  
In particular, 
there exists a positive constant $R_\Omega^\sharp (>R_\Omega)$ 
such that 
for every $x\in B(0,R_\Omega^\sharp)^c$, 
\[
\left|\frac{h_\Omega(x)}{\log |x|}-1\right|
+
\left|x\cdot \nabla h_\Omega(x)-1\right|\leq \frac{1}{2}.
\]
\end{lemma}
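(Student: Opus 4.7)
The plan is to construct $h_\Omega$ through the ansatz $h_\Omega(x) = \log|x| - g(x)$, thereby reducing the problem to finding a bounded harmonic function $g$ on $\Omega$ with Dirichlet data $g(x) = \log|x|$ on $\partial\Omega$; note that $\log|\cdot|$ is itself harmonic on $\Omega$ since $0\notin\overline{\Omega}$.

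First I would construct $g$ via the two-dimensional Kelvin transform $y=x/|x|^2$, which is special to the plane in that it preserves harmonicity without any weight factor. This inversion maps $\Omega$ onto a set $\Omega^{\ast}$ whose closure $\Omega^{\ast}\cup\{0\}$ is a bounded domain with smooth boundary lying inside $B(0,1/r_\Omega)\setminus B(0,1/R_\Omega)$. Via $\tilde g(y)=g(y/|y|^2)$, finding $g$ is equivalent to solving the classical interior Dirichlet problem on this bounded smooth domain with the smooth bounded data $-\log|y|$, which admits a unique smooth solution $\tilde g$. Riemann's removable singularity theorem then ensures $\tilde g$ is smooth in a full neighborhood of $y=0$. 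Uniqueness of $g$ on the unbounded $\Omega$ follows by squeezing any bounded harmonic function $w$ vanishing on $\partial\Omega$ between $\pm\varepsilon\log(|x|/r_\Omega)$ for every $\varepsilon>0$ (via the maximum principle on $\Omega\cap B(0,R)$, letting $R\to\infty$) and then sending $\varepsilon\to 0$.

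Next, using the elementary bounds $\log r_\Omega\le \log|x|\le \log R_\Omega$ on $\partial\Omega$ together with the same comparison principle, I would conclude $\log r_\Omega\le g(x)\le \log R_\Omega$ on all of $\Omega$, which rearranges into the asserted two-sided bound $\log(|x|/R_\Omega)\le h_\Omega(x)\le \log(|x|/r_\Omega)$. Strict positivity $h_\Omega>0$ then follows from the strong maximum principle, since $h_\Omega$ is a nonconstant harmonic function on the connected domain $\Omega$ that vanishes on $\partial\Omega$ and tends to $+\infty$ as $|x|\to\infty$.

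Finally, for the quantitative bounds at infinity I would differentiate $g(x)=\tilde g(x/|x|^2)$: since $\tilde g$ is smooth at $0$, a direct chain-rule computation yields $|\nabla g(x)|\le C|x|^{-2}$ for large $|x|$, while elliptic regularity handles the bounded region. Writing $\nabla h_\Omega(x)=x/|x|^2-\nabla g(x)$ therefore gives $|x|^2\nabla h_\Omega(x)-x=-|x|^2\nabla g(x)\in L^\infty(\Omega;\R^2)$, and the pointwise estimates $|h_\Omega(x)/\log|x|-1|=|g(x)|/\log|x|\le C/\log|x|$ and $|x\cdot\nabla h_\Omega(x)-1|=|x\cdot\nabla g(x)|\le C/|x|$ are both $\le 1/4$ once $|x|$ exceeds some $R_\Omega^\sharp$. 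The main obstacle is the sharp decay $|\nabla g(x)|=O(|x|^{-2})$: a naive Harnack or mean-value estimate only yields $O(|x|^{-1})$, so exploiting the two-dimensional removable singularity via the Kelvin transform is genuinely essential; a secondary subtlety is the non-uniqueness of unbounded solutions to the exterior Dirichlet problem in two dimensions, which forces us to work in the class of bounded harmonic functions throughout.
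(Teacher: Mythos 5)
Your proposal is correct and follows essentially the same route as the paper, whose proof is only a one-line sketch invoking the Kelvin transform $K:y\mapsto y/|y|^2$ and the Green function of $K(\Omega)\cup\{0\}$: your decomposition $h_\Omega(x)=\log|x|-g(x)$ with $\tilde g=g\circ K^{-1}$ solving the interior Dirichlet problem with data $-\log|y|$ is exactly that Green function with pole at the origin, written out explicitly. The additional details you supply (the squeezing argument for uniqueness among bounded correctors, the strong maximum principle for positivity, and the chain-rule computation giving $|\nabla g|=O(|x|^{-2})$, which is indeed the step a naive gradient estimate would miss) correctly fill in what the paper leaves implicit.
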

\begin{proof}
The function $h_\Omega$ can be constructed 
via the Kelvin transform 
$K:y\mapsto \frac{y}{|y|^2}$
and the Green function on the set $\{K(x)\;;\;x\in \Omega\}\cup \{0\}$. 
\end{proof}
Hereafter we shall use the non-degenerate positive harmonic function
\[
H_\Omega(x)=1+h_\Omega(x),\quad x\in \Omega,
\] 
where $h_\Omega$ is given in Lemma \ref{lem:harmonics2d}. 
Then we introduce the weighted $L^1$-space
\[
L_{d\mu}^1
=\{f\in L^1(\Omega)\;;\;H_\Omega f\in L^1(\Omega)\}, 
\quad 
\|f\|_{L_{d\mu}^1}=\|H_\Omega f\|_{L^1(\Omega)};
\]
note that $f\in L_{d\mu}^1$ is equivalent to $(1+\log \frac{|x|}{r_{\Omega}})f\in L^{1}(\Omega)$ 
which condition appears in Introduction.
We list several properties of 
the Dirichlet heat semigroup $\{e^{t\Delta_{\Omega}}\}_{t\geq 0}$
on $L^2(\Omega)$ 
(see e.g., \cite[Appendix]{ISTWpre}).
\begin{lemma}\label{lem:2dexteriorheat}
The following inequalities hold:
\begin{itemize}
\item[\bf (i)]For every $f\in L^2(\Omega)\cap L_{d\mu}^1$, 
\[
\|e^{t\Delta_{\Omega}}f\|_{L_{d\mu}^1}
\leq 
\|f\|_{L_{d\mu}^1},\quad t\geq 0.
\]
\item[\bf (ii)]There exists a positive constant $C_{10}$ such that 
for every $f\in L^2(\Omega)$,  
\begin{align*}
\left\|H_\Omega^{-1}e^{t\Delta_{\Omega}}\right\|_{L^\infty(\Omega)}
\leq 
C_{10}
t^{-\frac{1}{2}}
\big(1+\log (1+t)\big)^{-1}
\|f\|_{L^2(\Omega)}, \quad t>0.
\end{align*}
\item[\bf (iii)] for every $f\in L^2(\Omega)\cap L_{d\mu}^1$,  
\begin{align*}\label{eq:basic-1}
\|e^{t\Delta_{\Omega}}f\|_{L^2(\Omega)}
\leq 
C_{10}'
t^{-\frac{1}{2}}
\big(1+\log (1+t)\big)^{-1}
\|f\|_{L_{d\mu}^1},\quad t>0, 
\end{align*}
where the constant $C_{10}'$ can be chosen as the same constant $C_{10}$ in {\bf (ii)}. 
\end{itemize}
\end{lemma}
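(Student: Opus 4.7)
The plan is to prove the three parts in the order \textbf{(i)}, \textbf{(iii)}, \textbf{(ii)}, deducing the last one from the third by duality.

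For \textbf{(i)}, I would first use the Kato-type inequality $|e^{t\Delta_\Omega}f|\le e^{t\Delta_\Omega}|f|$ to reduce to the case $f\ge 0$. Writing $u(t)=e^{t\Delta_\Omega}f\ge 0$ with $u(t)|_{\pa\Omega}=0$ and differentiating $\int_\Omega H_\Omega\,u(t)\,dx$, Green's identity together with $\Delta H_\Omega=0$ (Lemma \ref{lem:harmonics2d}) and $H_\Omega|_{\pa\Omega}=1$ collapses the right-hand side to $\int_{\pa\Omega}\pa_\nu u(t)\,dS$, which is non-positive by Hopf's lemma with the outward-normal convention on $\pa\Omega$. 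The terms at infinity are handled via the growth $h_\Omega\sim\log|x|$ from Lemma \ref{lem:harmonics2d} combined with the Gaussian decay of $u(t)$ for $f\in L^2(\Omega)\cap L_{d\mu}^1$, yielding the desired monotonicity of $\int_\Omega H_\Omega\,u(t)\,dx$.

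For \textbf{(iii)}, the key ingredient is the weighted heat-kernel estimate developed in \cite{ISTWpre}, which enhances the classical 2D Nash inequality by a logarithmic factor encoding the recurrence of planar Brownian motion relative to the compact hole $\Omega^c$. Concretely, setting $\psi(t)=\|e^{t\Delta_\Omega}f\|_{L^2(\Omega)}^2$, one combines the energy identity $\psi'(t)=-2\|\nabla e^{t\Delta_\Omega}f\|_{L^2(\Omega)}^2$ with the $L_{d\mu}^1$-contraction from \textbf{(i)} to produce a differential inequality of the form
\begin{equation*}
\psi'(t)\le -\frac{c\,\psi(t)}{\|f\|_{L_{d\mu}^1}^2}\Bigl(\log\Bigl(e+\frac{\|f\|_{L_{d\mu}^1}^2}{\psi(t)}\Bigr)\Bigr)^2,
\end{equation*}
whose integration gives $\psi(t)\le C\,t^{-1}(1+\log(1+t))^{-2}\|f\|_{L_{d\mu}^1}^2$. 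Equivalently, one can establish a diagonal estimate $p_\Omega(2t,x,x)\le C\,H_\Omega(x)^2\,t^{-1}(1+\log(1+t))^{-2}$ for the Dirichlet heat kernel and apply Cauchy--Schwarz to $e^{t\Delta_\Omega}f(x)=\int_\Omega p_\Omega(t,x,y)f(y)\,dy$; both routes lead to the stated constant $C_{10}'$.

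For \textbf{(ii)}, I apply duality: since $\Delta_\Omega$ is selfadjoint on $L^2(\Omega)$, so is $e^{t\Delta_\Omega}$, and for any $g\in L^1(\Omega)$ the function $H_\Omega^{-1}g$ belongs to $L_{d\mu}^1$ with $\|H_\Omega^{-1}g\|_{L_{d\mu}^1}=\|g\|_{L^1(\Omega)}$. Hence
\begin{equation*}
\bigl|\lr{H_\Omega^{-1}e^{t\Delta_\Omega}f,\,g}\bigr|
=\bigl|\lr{f,\,e^{t\Delta_\Omega}(H_\Omega^{-1}g)}\bigr|
\le \|f\|_{L^2(\Omega)}\,\|e^{t\Delta_\Omega}(H_\Omega^{-1}g)\|_{L^2(\Omega)},
\end{equation*}
and applying \textbf{(iii)} to $H_\Omega^{-1}g$ then taking the supremum over $g$ with $\|g\|_{L^1(\Omega)}=1$ yields \textbf{(ii)} with the same constant, which is exactly the equality $C_{10}=C_{10}'$ asserted in the statement. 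The main obstacle is the logarithmic refinement at the heart of \textbf{(iii)}: the factor $(1+\log(1+t))^{-1}$ is not a consequence of the classical Nash inequality and genuinely requires the unbounded positive harmonic function $h_\Omega$ from Lemma \ref{lem:harmonics2d}; once this enhanced estimate is in hand, parts \textbf{(i)} and \textbf{(ii)} are relatively routine boundary-term and duality computations.
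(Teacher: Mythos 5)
The paper does not actually prove this lemma: it is quoted from \cite[Appendix]{ISTWpre}, and the remark immediately after attributes {\bf (ii)} and {\bf (iii)} to Grigor'yan--Saloff-Coste \cite{GS2002}. So your proposal has to be judged on its own. Your overall architecture --- {\bf (i)} from harmonicity of $H_\Omega$ and positivity of the kernel, {\bf (iii)} as the analytic core via a log-enhanced Nash iteration or an on-diagonal kernel bound, and {\bf (ii)} from {\bf (iii)} by self-adjointness of $e^{t\Delta_\Omega}$ together with $\|H_\Omega^{-1}g\|_{L^1_{d\mu}}=\|g\|_{L^1}$ --- is the standard and correct route, and the duality step does give the two estimates with the same constant, as asserted. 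The kernel route you mention for {\bf (iii)}, namely $p_\Omega(2t,x,x)\le C H_\Omega(x)^2 t^{-1}(1+\log(1+t))^{-2}$ combined with $p_\Omega(2t,x,y)\le p_\Omega(2t,x,x)^{1/2}p_\Omega(2t,y,y)^{1/2}$, is sound, but it simply restates the main theorem of \cite{GS2002}, so at that point you are citing the hard part rather than proving it --- which is what the paper does too.

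The one concrete error is the differential inequality at the centre of {\bf (iii)}. You wrote
\[
\psi'(t)\le -\frac{c\,\psi(t)}{\|f\|_{L^1_{d\mu}}^2}\Bigl(\log\Bigl(e+\tfrac{\|f\|_{L^1_{d\mu}}^2}{\psi(t)}\Bigr)\Bigr)^2 ,
\]
but the Nash mechanism in dimension $N=2$ produces the exponent $1+\frac{2}{N}=2$ on $\psi$, i.e.\ the inequality must read $\psi'\le -c\,\psi^2\,\|f\|_{L^1_{d\mu}}^{-2}\bigl(\log(e+\|f\|_{L^1_{d\mu}}^2/\psi)\bigr)^2$. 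The version with $\psi$ to the first power is not scale invariant in $f$ (replace $f$ by $\lambda f$: the left side scales like $\lambda^2$, the right side does not), so it cannot follow from any Nash-type inequality combined with {\bf (i)}; worse, integrating it forces $1/\log(e+\|f\|^2/\psi)$ to decrease at a uniform rate and hence $\psi$ to vanish in finite time, rather than yielding $\psi(t)\le Ct^{-1}(1+\log(1+t))^{-2}\|f\|_{L^1_{d\mu}}^2$. With the corrected quadratic form the integration does produce the stated decay, but you then still owe the reader the corresponding logarithmically improved Nash inequality
\[
\|g\|_{L^2}^{4}\Bigl(\log\Bigl(e+\tfrac{\|g\|_{L^1_{d\mu}}^2}{\|g\|_{L^2}^2}\Bigr)\Bigr)^{2}\le C\,\|g\|_{L^1_{d\mu}}^{2}\,\|\nabla g\|_{L^2}^{2},
\]
which is strictly stronger than Lemma \ref{lem:logNash} (the latter carries the weight $H_\Omega^{1/2}$ on the left but no logarithm of the ratio of norms) and is exactly the nontrivial input from \cite{GS2002}/\cite{ISTWpre}. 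Two smaller points: in {\bf (i)} the solution $e^{t\Delta_\Omega}|f|$ has no Gaussian decay for general $f\in L^2\cap L^1_{d\mu}$, so the boundary term at infinity against the growing weight $H_\Omega\sim\log|x|$ should be handled by a cutoff plus density argument (or by the supermartingale property of $H_\Omega$ for the killed Brownian motion) rather than by appealing to Gaussian decay; and Hopf's lemma is not needed there, the sign $\pa_\nu u\le 0$ already follows from $u\ge 0$ in $\Omega$ and $u=0$ on $\pa\Omega$.
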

	
\begin{remark}
The estimates {\bf (ii)} and {\bf (iii)}
are due to Grigor'yan and Saloff-Coste \cite{GS2002}. 
\end{remark}

We also need a modification of the Nash inequality 
with the measure $d\mu=H_\Omega\,dx$.
\begin{lemma}[{\cite{ISTWpre}}]
\label{lem:logNash}
There exists 
a positive constant $C_{11}$ such that 
for every $f\in H^1_0(\Omega)\cap L_{d\mu}^1$, 
\[
\|H_\Omega^{\frac{1}{2}}f\|_{L^2(\Omega)}^2
\leq 
C_{11}
\|f\|_{L_{d\mu}^1}
\|\nabla f\|_{L^2(\Omega)}.
\]
\end{lemma}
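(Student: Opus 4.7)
The inequality is a weighted two-dimensional Nash-type estimate whose proof, carried out in \cite{ISTWpre}, adapts Nash's classical semigroup argument to the weighted measure $d\mu=H_\Omega\,dx$. The key structural ingredient is that $H_\Omega=1+h_\Omega$ is positive and harmonic on $\Omega$ with $h_\Omega$ vanishing on $\pa\Omega$, so that for every $u\in H_0^1(\Omega)$ a double integration by parts yields
\[
\int_\Omega H_\Omega\, u\,(-\Delta u)\,dx = \int_\Omega H_\Omega|\nabla u|^2\,dx,
\]
the cross term involving $\nabla H_\Omega\cdot\nabla u$ being killed by one further integration by parts using $\Delta H_\Omega=0$ and $u|_{\pa\Omega}=0$. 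Applied to $u(t)=e^{t\Delta_\Omega}f$ this yields the dissipative identity $\tfrac{d}{dt}\|H_\Omega^{1/2}u(t)\|_{L^2(\Omega)}^2 = -2\|H_\Omega^{1/2}\nabla u(t)\|_{L^2(\Omega)}^2$, so that $t\mapsto\|H_\Omega^{1/2}u(t)\|_{L^2(\Omega)}^2$ is nonincreasing along the Dirichlet heat flow.

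For a free parameter $T>0$ to be optimized later, I would then use the Nash splitting
\[
\|H_\Omega^{1/2}f\|_{L^2(\Omega)}^2 = \|H_\Omega^{1/2}u(T)\|_{L^2(\Omega)}^2 + 2\int_0^T\|H_\Omega^{1/2}\nabla u(s)\|_{L^2(\Omega)}^2\,ds.
\]
The first summand is controlled via the ultracontractive smoothing provided by Lemma \ref{lem:2dexteriorheat}: composing (ii) and (iii) at time $T/2$ yields the pointwise bound $|u(T)(x)|\lesssim H_\Omega(x)\cdot T^{-1}(1+\log(1+T))^{-2}\|f\|_{L^1_{d\mu}}$, which, paired carefully against $H_\Omega f$ by exploiting the self-adjointness of $e^{(T/2)\Delta_\Omega}$ on $L^2(dx)$ (so as to transfer one factor of $H_\Omega$ off $u(T)$ and onto $f$, where it is absorbed by the $L^1_{d\mu}$-contraction (i)), delivers
\[
\|H_\Omega^{1/2}u(T)\|_{L^2(\Omega)}^2 \lesssim \frac{\|f\|_{L^1_{d\mu}}^2}{T(1+\log(1+T))^{2}}.
\]
The second summand is controlled by an energy argument on $s\mapsto\|H_\Omega^{1/2}\nabla u(s)\|_{L^2}^2$, again exploiting $\Delta H_\Omega=0$ so that the $\nabla H_\Omega$ cross terms cancel; this yields a bound of order $T\|\nabla f\|_{L^2(\Omega)}^2$ modulo a logarithmic Hardy correction absorbed by the harmonic structure of $H_\Omega$. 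Optimizing $T\sim\|f\|_{L^1_{d\mu}}/\|\nabla f\|_{L^2(\Omega)}$ then balances the two pieces and produces the announced inequality.

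The main obstacle is the first-piece estimate: a crude insertion of the pointwise bound would generate the divergent integral $\int_\Omega H_\Omega^3\,dx$, so one cannot simply multiply out the factors of $H_\Omega$. Avoiding this requires exploiting the semigroup factorization $e^{T\Delta_\Omega}=e^{(T/2)\Delta_\Omega}\circ e^{(T/2)\Delta_\Omega}$ together with the duality built into (ii)-(iii), and it is here that the two-dimensional exterior-domain geometry — and the Grigor'yan-Saloff-Coste recurrence mechanism responsible for the logarithmic decay in Lemma \ref{lem:2dexteriorheat} — enters in an essential way.
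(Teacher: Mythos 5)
The paper itself offers no proof of this lemma; it is imported verbatim from \cite{ISTWpre}, so there is no in-paper argument to compare yours against, and I can only judge the sketch on its own terms. Its preparatory part is sound: since $\Delta H_\Omega=0$ and $f$ vanishes on $\pa\Omega$, the cross term $\int_\Omega u\,\nabla H_\Omega\cdot\nabla u\,dx=\tfrac12\int_\Omega\nabla H_\Omega\cdot\nabla(u^2)\,dx$ indeed disappears, the identity $\tfrac{d}{dt}\|H_\Omega^{1/2}u(t)\|_{L^2}^2=-2\|H_\Omega^{1/2}\nabla u(t)\|_{L^2}^2$ holds (modulo justifying the integration by parts at infinity), and the Nash splitting is correct.

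The two bounds on which the argument actually rests are, however, only asserted, and neither survives scrutiny. For the dissipation term you need $\int_0^T\|H_\Omega^{1/2}\nabla u(s)\|_{L^2}^2\,ds\lesssim T\|\nabla f\|_{L^2}^2$ with the \emph{unweighted} gradient on the right, since that is what the lemma contains. But $H_\Omega\sim\log|x|$ is unbounded, so $\int_\Omega H_\Omega|\nabla f|^2\,dx$ can be infinite for $f\in H_0^1(\Omega)\cap L_{d\mu}^1$ (concentrate oscillations of $\nabla f$ in far dyadic annuli), so no monotonicity statement can produce an unweighted bound; moreover the weighted Dirichlet energy is not monotone along the flow, since $\tfrac{d}{ds}\int_\Omega H_\Omega|\nabla u|^2\,dx=-2\int_\Omega H_\Omega(\Delta u)^2\,dx-2\int_\Omega(\nabla H_\Omega\cdot\nabla u)\Delta u\,dx$ and the last term has no sign and is not removed by harmonicity. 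The phrase ``logarithmic Hardy correction absorbed by the harmonic structure'' is exactly where the whole difficulty of the lemma sits, and it is not addressed. For the remainder term, the claimed decay $\|H_\Omega^{1/2}u(T)\|_{L^2}^2\lesssim T^{-1}(\log T)^{-2}\|f\|_{L^1_{d\mu}}^2$ is quantitatively too strong: testing on a bump of amplitude $1$ supported in a ball of radius $R$ centred at distance $R$ from the obstacle gives $\|f\|_{L^1_{d\mu}}\sim R^2\log R$, $\|\nabla f\|_{L^2}\sim1$, $\|H_\Omega^{1/2}f\|_{L^2}^2\sim R^2\log R$, and if both of your bounds held, optimizing $T$ would yield $\|H_\Omega^{1/2}f\|_{L^2}^2\lesssim\|f\|_{L^1_{d\mu}}\|\nabla f\|_{L^2}/\log(\|f\|_{L^1_{d\mu}}/\|\nabla f\|_{L^2})\sim R^2$, contradicting $R^2\log R$. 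Finally, the proposed ``duality'' repair of the divergent $\int H_\Omega^3$ only relocates the problem: writing $\int_\Omega H_\Omega u(T)^2\,dx=\int_\Omega u(T/2)\,e^{(T/2)\Delta_\Omega}\bigl(H_\Omega u(T)\bigr)\,dx$ and applying Lemma \ref{lem:2dexteriorheat}(iii) requires $\|H_\Omega u(T)\|_{L^1_{d\mu}}=\|H_\Omega^2u(T)\|_{L^1}$, which is not controlled by the contraction property (i) (that only controls $\|H_\Omega u(T)\|_{L^1}$). So the sketch, as it stands, does not prove the lemma; the placement of the weight on the left-hand side only, against an unweighted gradient on the right, is precisely the nontrivial content and requires an argument your outline does not supply.
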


Decay estimates 
of the $L^2$-norm and the energy for 
this case and their optimality 
are the following. 
Here we use the notation 
\[
\mathbf{L}_{d\mu}^1=L_{d\mu}^1\times L_{d\mu}^1.
\] 
\begin{proposition}\label{prop:optimality5}
Let $\Omega$ be a two-dimensional exterior domain 
and let $u$ be the weak solution of \eqref{eq:Nd-ext-dw} with $(u_0,u_1)\in \mathbf{E}(\Omega)\cap \mathbf{L}_{d\mu}^1$. 
Then the following assertions hold: 

\begin{itemize}
\item[\bf (i)]
For every $m\in \Z_{\geq 0}$, 
there exists a positive constant $C_{12,m}$ depending only on 
$\Omega$ and $m$ such that 
\begin{align*}
\|u(t)-V_m(t)\|_{L^2(\Omega)}^2+tE_{\Omega}(u-V_m;t)
&\leq C_{12,m}t^{-1-2m}(\log t)^{-2}
\|(u_0,u_1)\|_{\mathbf{E}(\Omega)\cap \mathbf{L}_{d\mu}^1}^2,
\quad t\geq 2.
\end{align*}
where the family $\{V_m(\cdot)\}_{m\in\Z_{\geq 0}}$ is given 
in Definition \ref{def:asymptotics} with $A=-\Delta_{\Omega}$.
\item[\bf (ii)] If 
$\int_{\Omega}(u_0+u_1)h_\Omega\,dx\neq 0$,
then 
the following quantities are both finite and strictly positive:
\[
\limsup_{t\to\infty} \Big(t(\log t)^2\|u(t)\|_{L^2(\Omega)}^2\Big), 
\quad 
\limsup_{t\to\infty} \Big(t^{2}(\log t)^2E_{\Omega}(u;t)\Big).
\]
\item[\bf (iii)]If $u_1=-u_0$ and $\int_{\Omega}u_0h_\Omega\,dx\neq 0$, 
then 
the following quantities are both finite and strictly positive:
\[
\limsup_{t\to\infty} \Big(t^{3}(\log t)^2\|u(t)\|_{L^2(\Omega)}^2\Big), 
\quad 
\limsup_{t\to\infty} \Big(t^{4}(\log t)^2E_{\Omega}(u;t)\Big).
\]
\end{itemize}
\end{proposition}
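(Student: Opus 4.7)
I would first treat the decay estimate (i) by combining Theorem \ref{intro:thm:abst-expansion} with the sharp two-dimensional heat kernel bound of Lemma \ref{lem:2dexteriorheat}(iii), and then the sharpness assertions (ii) and (iii) via the test function method with the unbounded positive harmonic function $h_\Omega$ from Lemma \ref{lem:harmonics2d}.

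For (i), I apply Theorem \ref{intro:thm:abst-expansion} with $n=m+1$ to obtain
\[
\|u(t)-V_{m+1}(t)\|_{L^2(\Omega)}^2 + tE_\Omega(u-V_{m+1};t)\leq C t^{-2m-2}\|(u_0,u_1)\|_{\mathbf{E}(\Omega)}^2,
\]
and control $v_m = V_{m+1}-V_m$ separately. By Definition \ref{def:asymptotics}, each summand in $v_m$ has the form $(-\Delta_\Omega)^{m+j}t^j e^{t\Delta_\Omega}f$ with $f\in\{u_0,u_0+u_1\}$ and $0\leq j\leq m$; splitting $e^{t\Delta_\Omega}=e^{(t/2)\Delta_\Omega}e^{(t/2)\Delta_\Omega}$, the functional calculus bound $\sup_{\lambda\geq 0}\lambda^{m+j}t^j e^{-\lambda t/2}\leq C t^{-m}$ absorbs the first factor, while Lemma \ref{lem:2dexteriorheat}(iii) applied to the second contributes $t^{-1/2}(\log t)^{-1}\|f\|_{L_{d\mu}^1}$. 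Analogous estimates for $\pa_t v_m$ and $\nabla v_m$ then yield
\[
\|v_m(t)\|_{L^2(\Omega)}^2 + tE_\Omega(v_m;t)\leq C t^{-2m-1}(\log t)^{-2}\|(u_0,u_1)\|_{\mathbf{L}_{d\mu}^1}^2.
\]
Since $(\log t)^2\leq t$ for $t\geq 1$, the bound from Theorem \ref{intro:thm:abst-expansion} is also dominated by $t^{-2m-1}(\log t)^{-2}$, and the triangle inequality gives (i).

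The upper bounds in (ii) and (iii) follow from (i) at $m=0$ and $m=1$ respectively (in (iii), $u_1=-u_0$ forces $v_0\equiv 0$ and hence $V_1\equiv 0$, so the $m=1$ estimate controls $u$ directly). For the lower bound in (ii), I adapt the test function method of Propositions \ref{prop:optimality1} and \ref{prop:optimality3}, taking $\varphi_\rho(x,t)=\widetilde\zeta_\rho(x,t)h_\Omega(x)$ with $\widetilde\zeta_\rho(x,t)=\eta(([(|x|-2R_\Omega)_+]^2+t)/\rho)$. Because $h_\Omega$ is harmonic on $\Omega$ and vanishes on $\pa\Omega$, testing $\pa_t v_0-\Delta v_0=0$ against $\varphi_\rho$ and integrating by parts in both variables yields
\[
\int_\Omega (u_0+u_1)(x)\,h_\Omega(x)\widetilde\zeta_\rho(x,0)\,dx=-\int_0^\infty\int_\Omega v_0\bigl[(\pa_t+\Delta)\varphi_\rho\bigr]\,dx\,dt,
\]
with $|(\pa_t+\Delta)\varphi_\rho|\leq (C/\rho) H_\Omega \mathbbm{1}_{\widetilde\xi_\rho\in[1/2,1]}$ via the asymptotic $|\nabla h_\Omega|\leq C/|x|$ of Lemma \ref{lem:harmonics2d}. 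Under the hypothesis $\int(u_0+u_1)h_\Omega\,dx\neq 0$, the left-hand side stays bounded below by a positive constant for all $\rho\geq\rho_*$, and a weighted Cauchy--Schwarz splitting $(v_0 H_\Omega^{1/2})\cdot(H_\Omega^{-1/2}(\pa_t+\Delta)\varphi_\rho)$, followed by integration in $\rho$ against $d\rho/\rho$ together with the $L_{d\mu}^1$-contractivity of $\{e^{t\Delta_\Omega}\}$ from Lemma \ref{lem:2dexteriorheat}(i), produces a logarithmic lower bound on a weighted $L^2$ integral of $v_0$. Feeding this through Lemma \ref{lem:logNash} converts it into a lower bound on $\int_0^T\|\nabla v_0\|_{L^2}\,dt$, which combined with the matching upper bound from (i) yields the two $\limsup$ statements. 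Part (iii) is handled in the same way, now applied to $v_1(t)=\Delta_\Omega e^{t\Delta_\Omega}u_0$, with the interpolation $\|\nabla f\|_{L^2}^3\leq\|f\|_{L^2}^2\|\nabla\Delta_\Omega f\|_{L^2}$ lifting the $L^2$ lower bound to the energy level exactly as in Proposition \ref{prop:optimality1}(ii).

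The main obstacle is matching the sharp $(\log t)^{-2}$ factor on the lower-bound side: an unweighted test function with plain Cauchy--Schwarz would produce only a single logarithmic factor. Extracting the extra logarithm forces one to use $h_\Omega$ (which, unlike the bounded harmonic function of Lemma \ref{lem:harmonicsNd} in higher dimensions, is \emph{unbounded} in two dimensions) as the test function weight, and to combine the resulting identity with Lemma \ref{lem:logNash} and the $L_{d\mu}^1$-invariance of the Dirichlet heat flow. This is the two-dimensional mechanism originally exploited in \cite{ISTWpre} for the case $m=0$, and the present plan extends the same mechanism to all $m\in\Z_{\geq 0}$ by means of the expansion in (i).
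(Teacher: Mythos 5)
Your proposal is correct and follows essentially the same route as the paper: part (i) by applying Theorem \ref{intro:thm:abst-expansion} with $n=m+1$ and estimating the discarded profile $v_m$ through the Grigor'yan--Saloff-Coste bound of Lemma \ref{lem:2dexteriorheat}, and parts (ii)--(iii) by the test function method with the weight $\widetilde{\zeta}_\rho h_\Omega$, the weighted Nash inequality of Lemma \ref{lem:logNash}, and the $L_{d\mu}^1$-contractivity, yielding the $\log\log T$ lower bound on $\int_0^T\|\nabla v_0\|_{L^2}\,dt$ and the interpolation step for $v_1$. No substantive differences from the paper's argument.
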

\begin{proof}
{\bf (i)}
Applying Theorem \ref{intro:thm:abst-expansion}
with $n=m+1$, we find the following inequality:
\[
\|u(t)-V_{m+1}(t)\|_{L^2(\Omega)}^2+tE_{\Omega}(u-V_{m+1};t)
\leq C_{1,m+1}t^{-2m-2}
\|(u_0,u_1)\|_{\mathbf{E}(\Omega)}^2,
\quad t\geq 1.
\]
Noting that $u(t)-V_{m+1}(t)=u(t)-V_{m}(t)-v_{m}(t)$, 
we check the estimate for $v_{m}(t)$.
By Lemma \ref{lem:2dexteriorheat}, we have for $t\geq 2$, 
\begin{align*}
\|v_{m}(t)\|_{L^2(\Omega)}^2+tE_{\Omega}(v_{m};t)
&\leq Ct^{-2m}\Big(
\|e^{\frac{t}{2}\Delta_{\Omega}}v_{00}\|_{L^2(\Omega)}^2+\|e^{\frac{t}{2}\Delta_{\Omega}}u_{0}\|_{L^2(\Omega)}^2\Big)
\\
&\leq C_mt^{-1-2m}(\log t)^{-2}\|(u_0,u_1)\|_{\mathbf{L}_{d\mu}^1}^2
\end{align*}
for some constant $C_m$. 

{\bf (ii)} In this case, we adopt 
the strategy of the test function method 
in Ikeda--Sobajima \cite{IkSo-exterior-blowup}
to find a lower bound for $v_0(t)$. 
Let $\eta$, $\widetilde{\zeta}_\rho$ and $\widetilde{\xi}_\rho$ are as in the proof of Proposition \ref{prop:optimality3}
and let $h_\Omega$ be given in Lemma \ref{lem:harmonics2d}. 
Then we see that 
\begin{align*}
|\pa_t(\widetilde{\zeta}_\rho h_\Omega)+\Delta (\widetilde{\zeta}_\rho h_\Omega)|
&\leq 
\big(|\pa_t\widetilde{\zeta}_ \rho|+|\Delta \widetilde{\zeta}_\rho|\big)h_\Omega+2|\nabla \widetilde{\zeta}_\rho\cdot\nabla h_\Omega|
\leq
\frac{K}{\rho}\chi(\widetilde{\xi}_\rho)(1+h_\Omega)
\end{align*}
for some positive constant $K$; note that the difference compared to the case of Proposition \ref{prop:optimality1} $(N\geq 3)$ is just $h_\Omega\notin L^\infty(\Omega)$. 
Choose $\rho_*\geq 2$ such that 
for every $\rho>\rho_*$, 
\[
\left|
\int_{\Omega}v_0\widetilde{\zeta}_\rho h_\Omega\,dx
\right|
\geq c_0
=
\frac{1}{2}\left|
\int_{\Omega}v_0h_\Omega\,dx
\right|. 
\]
Then a computation similar to the proof of Proposition \ref{prop:optimality1} gives
for every $\rho>\rho_*$
\begin{align*}
c_0^2
	&\leq 
\left(\iint_{\Omega\times \R_+}|\pa_t \widetilde{\zeta}_\rho+\Delta \widetilde{\zeta}_\rho|^{2}\,dxdt\right) \rho Y'(\rho)\leq  
K\rho\log \rho \widetilde{Y}'(\rho), 
\end{align*}
where we have put 
\[
\widetilde{Y}(T)=\int_0^T\left(\iint_{\Omega\times \R_+}|V_0|^2\mathbbm{1}_{[1/2,1]}(\widetilde{\xi}_\rho)(1+h_\Omega)\,dxdt\right)\frac{d\rho}{\rho}.
\]
This implies that 
\[
\widetilde{Y}(\rho_*)+\frac{c_0^2}{K}\log\left(\frac{\log T}{\log \rho_*}\right)\leq \widetilde{Y}(T)
\leq 
\log 2 \int_0^T\|H_{\Omega}^{\frac{1}{2}}V_0(t)\|_{L^2(\Omega)}^2\,dt.
\]
Therefore Lemmas \ref{lem:logNash} and  \ref{lem:2dexteriorheat} give 
\[
\frac{c_0^2}{KC_{8} \log 2 \|v_0\|_{L_{d\mu}^1}}\log\left(\frac{\log T}{\log \rho_*}\right)
\leq 
\int_0^T\|\nabla V_0(t)\|_{L^2(\Omega)}\,dt.
\]
This implies $\limsup_{t\to\infty}\big(t^2(\log t)^2E_{\Omega}(v;t)\big)>0$. 
The estimate for $L^2$-norm can be found via 
$\|\nabla v_0(t)\|_{L^2(\Omega)}^2\leq 
2t^{-1}\|v_0(t/2)\|_{L^2(\Omega)}^2$.

{\bf (iii)} 
In this case we note again that $v_0(t)\equiv 0$ and therefore $V_1(t)=v_1(t)=\Delta e^{t\Delta_{\Omega}}u_0$. 
As in {\bf (ii)}, 
in this case we have
\[
\limsup_{t\to\infty}\Big(t^2(\log t)^2\|\nabla e^{t\Delta_\Omega}u_0\|_{L^2(\Omega)}^2\Big)>0.
\]
Then it is enough to observe that 
\begin{align*}
t^2(\log t)^2\|\nabla e^{t\Delta_\Omega}u_0\|_{L^2(\Omega)}^2
&\leq 
\big(t(\log t)^2\|e^{t\Delta_\Omega}u_0\|_{L^2(\Omega)}^2\big)^{\frac{1}{2}}
\big(t^{3}(\log t)^2\|\Delta e^{t\Delta_\Omega}u_0\|_{L^2(\Omega)}^2\big)^{\frac{1}{2}}
\\
&\leq 
C_7\|u_0\|_{L_{d\mu}^1}\big(t^{3}(\log t)^2\|v_1(t)\|_{L^2(\Omega)}^2\big)^{\frac{1}{2}}
\end{align*}
and 
\begin{align*}
t^{3}(\log t)^2\|v_1(t)\|_{L^2(\Omega)}^2
&\leq 
\big(t^{\frac{1}{2}}(\log t)\|\nabla e^{t\Delta_\Omega}u_0\|_{L^2(\Omega)}\big)^{\frac{2}{3}}
\big(t^{4}(\log t)^2\|\nabla \Delta e^{t\Delta_\Omega}u_0\|_{L^2(\Omega)}^2\big)^{\frac{2}{3}}
\\
&\leq 
(C_7\|u_0\|_{L_{d\mu}^1})^{\frac{2}{3}}
\big(t^{4}(\log t)^2E_{\Omega}(v_1;t)\big)^{\frac{2}{3}}.
\end{align*}
The proof is complete.
\end{proof}

\subsection{Estimates for the local energy}

For $R>R_\Omega$, we consider the large-time behavior 
of a localized version of the energy functional given by 
\[
E_{\Omega,R}(u;t)=\int_{\Omega\cap B(0,R)}\Big(|\pa_tu(x,t)|^2+|\nabla u(x,t)|^2\Big)\,dx
\]
for $R>R_\Omega$ as an application of asymptotic expansion (Theorem \ref{intro:thm:abst-expansion}). 
Here we again use the space $\mathbf{L}^q(\Omega)=L^q(\Omega)\times L^q(\Omega)$ 
and do not restrict ourselves to the case of compactly supported functions.
The first assertion is for the large-time behavior of 
the local energy in $\Omega=\R^N$.

\begin{proposition}
Let $\Omega=\R^N$ $(N\in \N)$.
For every $q\in [1,2)$ and $m\in \Z_{\geq 0}$, 
there exists a positive constant $C_{10,q,m}$ such that 
the following assertion holds:
If $(u_0,u_1)\in \mathbf{E}(\R^N)\cap \mathbf{L}^q(\R^N)$,  
then the corresponding weak solution of \eqref{eq:Nd-ext-dw} 
satisfies for  every $R>0$, 
\begin{align*}
E_{\R^N,R}(u-V_m;t)\leq C_{10,q,m}R^N
t^{-\frac{N}{q}-2m-1}
\|(u_0,u_1)\|_{\mathbf{E}(\R^N)\cap \mathbf{L}^q(\R^N)}^2, \quad t\geq 1,
\end{align*}
where the family $\{V_m(\cdot)\}_{m\in\Z_{\geq 0}}$ 
in Definition \ref{def:asymptotics} with $A=-\Delta=-\Delta_{\R^N}$.
\end{proposition}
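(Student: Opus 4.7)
The plan is to split $u-V_m$ into an explicit sum of profiles (for which pointwise $L^{\infty}$ bounds come cheaply from the heat semigroup) and a fast-decaying remainder whose global energy is already controlled by Proposition \ref{prop:dw-whole-diffusionphenomena}. Fix an integer $M_{0}>N/4$ and decompose
\[
u(t)-V_{m}(t)=\sum_{\ell=m}^{m+M_{0}-1}v_{\ell}(t)+\bigl(u(t)-V_{m+M_{0}}(t)\bigr).
\]

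For each $v_{\ell}(t)$, Definition \ref{def:asymptotics} exhibits it as a finite linear combination of terms of the form $t^{j}(-\Delta)^{\ell+j}e^{t\Delta}f$ with $f\in\{u_{0},v_{00}\}$. The standard $L^{q}$--$L^{\infty}$ smoothing estimate for the heat semigroup,
\[
\bigl\|\partial_{x}^{\alpha}(-\Delta)^{k}e^{t\Delta}f\bigr\|_{L^{\infty}(\R^{N})}\leq C\,t^{-\frac{N}{2q}-k-\frac{|\alpha|}{2}}\|f\|_{L^{q}(\R^{N})}\quad(t>0),
\]
combined with the identity $\partial_{t}[(-\Delta)^{k}e^{t\Delta}f]=(-\Delta)^{k+1}e^{t\Delta}f$, yields a uniform pointwise bound
\[
|\partial_{t}v_{\ell}(x,t)|^{2}+|\nabla v_{\ell}(x,t)|^{2}\leq C_{\ell}\,t^{-\frac{N}{q}-2\ell-1}\|(u_{0},u_{1})\|_{\mathbf{L}^{q}(\R^{N})}^{2},\qquad x\in\R^{N},\ t\geq 1.
\]
Integration over $B(0,R)$ automatically produces the factor $R^{N}$, and since $\ell\geq m$ the dominant decay rate across the finite sum is exactly $t^{-N/q-2m-1}$, as required.

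For the remainder $u-V_{m+M_{0}}$, Proposition \ref{prop:dw-whole-diffusionphenomena} provides the global energy estimate
\[
E_{\R^{N}}(u-V_{m+M_{0}};t)\leq C\,t^{-N(1/q-1/2)-2(m+M_{0})-1}\|(u_{0},u_{1})\|_{\mathbf{E}\cap\mathbf{L}^{q}}^{2},
\]
whose exponent on $t$ is, by the choice $M_{0}>N/4$, no larger than $-N/q-2m-1$. When $R\geq 1$, the monotonicity $E_{\R^{N},R}\leq E_{\R^{N}}$ and the trivial inequality $R^{N}\geq 1$ close the argument; when $R\in(0,1)$, one must upgrade this global $L^{2}$-type estimate into a pointwise bound on the energy density of $u-V_{m+M_{0}}$. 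I would handle the latter by iterating the expansion once more at a higher order $M_{1}$, then controlling $\partial_{t}(u-V_{m+M_{1}})$ and $\nabla(u-V_{m+M_{1}})$ in $H^{s}(\R^{N})$ with some $s>N/2$ via the higher-order energy estimates of Lemma \ref{lem:regularity1} applied to $A^{s/2}(u-V_{m+M_{1}})$, the excess spatial derivatives being provided by the regularization operators $I_{n},J_{n}$ of Lemma \ref{lem:unit-decomposition}. Sobolev embedding $H^{s}\hookrightarrow L^{\infty}$ then translates this into the required uniform pointwise bound.

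The main obstacle is this last step: producing a pointwise bound on the energy density of the remainder with the correct decay rate, without imposing any additional spatial regularity on $(u_{0},u_{1})\in\mathbf{E}(\R^{N})\cap\mathbf{L}^{q}(\R^{N})$ beyond what Proposition \ref{prop:dw-whole-diffusionphenomena} already uses. The decomposition $1_{H}=I_{n}+A^{n}J_{n}$ of Lemma \ref{lem:unit-decomposition} is the crucial device here, because the factor $A^{n}$ in front of $J_{n}u$ effectively trades $n$ orders of spatial derivatives for an equivalent amount of temporal decay, so that Sobolev embedding becomes available even though the original data only lie in the energy space intersected with $\mathbf{L}^{q}$.
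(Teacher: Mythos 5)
Your decomposition and the treatment of its two pieces coincide with the paper's proof: the paper peels off the profiles $v_\ell$ for $\ell=m,\dots,m+N-1$, bounds $\|\pa_t v_\ell(t)\|_{L^\infty}$ and $\|\nabla v_\ell(t)\|_{L^\infty}$ by the $L^q$--$L^\infty$ heat smoothing estimate so that integration over $B(0,R)$ yields the factor $R^N$, and controls the remainder $u-V_{m+N}$ by $E_{\R^N,R}\le E_{\R^N}$ together with Theorem \ref{intro:thm:abst-expansion} (your choice $M_0>N/4$ with Proposition \ref{prop:dw-whole-diffusionphenomena} gives the same exponent bookkeeping). The only divergence is your attempted upgrade for $R<1$: the paper does not perform it (it simply uses the monotonicity $E_{\R^N,R}\le E_{\R^N}$, in effect reading the estimate for $R\gtrsim 1$), and your proposed route via $H^s\hookrightarrow L^\infty$ cannot succeed, because the remainder $u-V_{m+M_1}$ still carries the full non-smooth part of $u$ --- the damped wave flow does not regularize energy-space data, and the operator $A^{n}$ sitting in front of $J_n u$ in the decomposition $1_H=I_n+A^nJ_nu$ consumes exactly the regularity that $J_n$ provides, so no pointwise bound on the energy density of the remainder is available. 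Dropping that digression, your argument is the paper's argument.
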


\begin{proof}
Fix $R>0$ arbitrary.
We apply Theorem \ref{intro:thm:abst-expansion} with $n=m+N$, that is, 
$u-V_{m+N}$ satisfies 
\[
E_{\R^N,R}(u-V_{m+N};t)
\leq 
E_{\R^N}(u-V_{m+N};t)\leq C_{4,m+N}t^{-2N-2m-1}\|(u_0,u_1)\|_{\mathbf{E}(\R^N)}^2, \quad t\geq 1.
\]
Therefore it suffices to consider the estimates for $v_\ell$ ($\ell=m,\ldots, m+N-1$).
By the basic property of heat semigroup, 
we see for $t\geq 1$, 
\begin{align*}
E_{\R^N,R}(v_\ell;t)
&\leq 
\Big(\|\pa_tv_\ell(t)\|_{L^\infty(\R^N)}^2+\|\nabla v_\ell(t)\|_{L^\infty(\R^N)}^2\Big)
\|\mathbbm{1}_{B(0,R)}\|_{L^1(\R^N)}
\\
&\leq 
CR^{N}t^{-\frac{N}{q}-2\ell-1}\|(u_0,u_1)\|_{\mathbf{L}^q(\R^N)}^2.
\end{align*}
Consequently, the estimate for $E_{\R^N,R}(u-V_{m};t)$ 
only reflects the worst one $E_{\R^N,R}(v_{m};t)$. 
The proof is complete.
\end{proof}

For the exterior problem, 
the gradient of the heat kernel 
does not have a good decay property in general (see e.g., 
Gimbo--Sakaguchi \cite{GiSa1994} and also Georgiev--Taniguchi \cite{GiTa2019}).
The following assertion 
can be regarded as a modification of the result in \cite{Shibata1983,DaSh1995}.
A precise dependence of the parameter $R$ for the constant 
is explicitly given.

\begin{proposition}\label{prop:Nd-exterior}
Let $\Omega$ be an exterior domain $(N\geq 2)$. 
For every $q\in [1,2)$ and $m\in \Z_{\geq 0}$, 
there exists a positive constant $C_{13,q,m}$ such that 
the following assertion holds:
If $(u_0,u_1)\in \mathbf{E}(\Omega)\cap \mathbf{L}^q(\Omega)$, 
then the corresponding weak solution $u$ of \eqref{eq:Nd-ext-dw}
satisfies for every $R>R_\Omega$, 
\[
E_{\Omega,R}(u-V_{m};t)\leq C_{13,q,m}R^{N-2}
t^{-\frac{N}{q}-2m}
\|(u_0,u_1)\|_{\mathbf{E}(\Omega)\cap \mathbf{L}^q(\Omega)}^2, \quad t\geq 1.
\]
where the family $\{V_m(\cdot)\}_{m\in\Z_{\geq 0}}$ is given 
in Definition \ref{def:asymptotics} with $A=-\Delta_{\Omega}$.
\end{proposition}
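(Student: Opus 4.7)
The plan is in three steps: first, use Theorem~\ref{intro:thm:abst-expansion} with a large $n$ to push the remainder $u - V_n$ below the target rate globally; second, reduce to bounding each asymptotic profile $v_\ell$ for $m \leq \ell \leq n - 1$ on $B(0, R) \cap \Omega$; third, split into the regimes $R \geq \sqrt{t}$ and $R \leq \sqrt{t}$ and treat them separately. Concretely, I would choose $n = m + \lceil N/(2q) \rceil + 1$. Since $R > R_\Omega > 0$, the trivial inequality $1 \leq (R/R_\Omega)^{N-2}$ combined with Theorem~\ref{intro:thm:abst-expansion} yields
\[
E_{\Omega, R}(u - V_n; t) \leq E_\Omega(u - V_n; t) \leq C t^{-2n - 1}\|(u_0, u_1)\|_{\mathbf{E}(\Omega)}^2 \leq C R^{N-2} t^{-N/q - 2m}\|(u_0, u_1)\|_{\mathbf{E}(\Omega)}^2.
\]
The decomposition $u - V_m = (u - V_n) + \sum_{\ell = m}^{n - 1} v_\ell$ then reduces the problem to proving $E_{\Omega, R}(v_\ell; t) \leq C R^{N-2} t^{-N/q - 2\ell}\|(u_0, u_1)\|_{\mathbf{L}^q(\Omega)}^2$ for each such $\ell$.

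The representation $v_\ell(t) = t^{-\ell} \sum_j c_j (tA)^{\ell + j} e^{-tA} f$ (with $A = -\Delta_\Omega$ and $f \in \{u_0, u_0 + u_1\}$) together with the factorization $e^{-tA/4} \cdot (tA)^{\ell+j} e^{-tA/2} \cdot e^{-tA/4}$ provides the tools. The spectral bound $\|(tA)^{\ell+j} e^{-tA/2}\|_{L^2 \to L^2} \leq C_{\ell+j}$ combined with the comparison-principle $L^q \to L^2$ and $L^2 \to L^\infty$ estimates for $e^{t\Delta_\Omega}$ will give both the global $L^2$ bound $E_\Omega(v_\ell; t) \leq C t^{-2\ell - 1 - N(1/q - 1/2)}\|(u_0, u_1)\|_{\mathbf{L}^q(\Omega)}^2$ and the pointwise bounds
\[
\|v_\ell(t)\|_{L^\infty(\Omega)}^2 \leq C t^{-N/q - 2\ell}\|(u_0, u_1)\|_{\mathbf{L}^q(\Omega)}^2, \quad \|\Delta_\Omega v_\ell(t)\|_{L^\infty(\Omega)}^2 + \|\partial_t v_\ell(t)\|_{L^\infty(\Omega)}^2 \leq C t^{-N/q - 2\ell - 2}\|(u_0, u_1)\|_{\mathbf{L}^q(\Omega)}^2,
\]
the last two using that $\Delta_\Omega v_\ell$ and $\partial_t v_\ell$ share the same structure as $v_\ell$ but with one additional factor of $A$ or $t^{-1}$. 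In the regime $R \geq \sqrt{t}$, the global $L^2$ bound directly dominates the target since $R^{N-2} \geq t^{(N-2)/2}$ and $\ell \geq m$. In the regime $R \leq \sqrt{t}$, I take a cutoff $\chi \in C_c^\infty(\R^N)$ with $\chi \equiv 1$ on $B(0, R)$, $\supp \chi \subset B(0, 2R)$, and $|\nabla \chi| \leq C/R$; since $v_\ell(\cdot, t) \in H_0^1(\Omega)$, integration by parts on $\Omega$ gives
\[
\int_\Omega \chi^2 |\nabla v_\ell|^2 \, dx \leq \|v_\ell\|_{L^\infty} \|\Delta_\Omega v_\ell\|_{L^\infty} |B(0, 2R)| + \tfrac{1}{2} \int_\Omega \chi^2 |\nabla v_\ell|^2 \, dx + C R^{N-2} \|v_\ell\|_{L^\infty}^2,
\]
and absorbing the middle term yields $C R^N t^{-N/q - 2\ell - 1} + C R^{N-2} t^{-N/q - 2\ell}$; the relation $R^2 \leq t$ controls the first summand by the second. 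The $\partial_t v_\ell$ contribution is bounded by $\|\partial_t v_\ell\|_{L^\infty}^2 |B(0, R)| \leq C R^N t^{-N/q - 2\ell - 2}$, which in the same regime becomes $\leq C R^{N-2} t^{-N/q - 2\ell - 1}$.

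The main obstacle is exactly the difficulty flagged in the paper immediately before the statement: gradient estimates for the Dirichlet heat semigroup on an exterior domain are known to be delicate. I deliberately circumvent this by never invoking a gradient bound on $e^{t\Delta_\Omega}$; instead, the local gradient information on $v_\ell$ is extracted from the scalar $L^\infty$ bounds on $v_\ell$ and $\Delta_\Omega v_\ell$ via the cutoff identity above, and those scalar bounds follow by comparison $|e^{t\Delta_\Omega} f| \leq e^{t \Delta_{\R^N}} |f_*|$ from the Gaussian heat-kernel case. The $R^{N-2}$ factor then has a transparent geometric origin: it arises from $|\nabla \chi|^2 \lesssim R^{-2}$ integrated over a set of volume of order $R^N$, which is precisely why the local energy decays strictly faster than the global one.
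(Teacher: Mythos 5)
Your proposal is correct and follows essentially the same route as the paper: reduce to finitely many profiles $v_\ell$ via Theorem \ref{intro:thm:abst-expansion} with a sufficiently large $n$, then handle the local gradient term by integrating by parts against a cutoff supported in $B(0,2R)$, which trades $|\nabla v_\ell|^2$ for $v_\ell \Delta v_\ell$ and $v_\ell^2\Delta\zeta_R$ and thereby avoids gradient estimates for the Dirichlet heat semigroup — exactly the paper's device. The only differences are cosmetic: you split into the regimes $R\lessgtr\sqrt{t}$ and work with $L^\infty$ bounds throughout, whereas the paper treats all $R$ at once by pairing $L^\infty$ and $L^N$ bounds with H\"older against $\|\Delta\zeta_R\|_{L^1}\lesssim R^{N-2}$ and $\|\zeta_R\|_{L^{N/(N-2)}}\lesssim R^{N-2}$; your more generous choice $n=m+\lceil N/(2q)\rceil+1$ also cleanly guarantees the remainder is below the target rate for all $q\in[1,2)$ and $N\geq 2$.
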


\begin{proof}
Fix $R>R_\Omega$ arbitrary. 
We apply Theorem \ref{intro:thm:abst-expansion} with $n=m+N-2$, that is, 
$u-V_{m+N-2}$ satisfies 
\[
E_{\Omega,R}(u-V_{m+N-2};t)\leq E_{\Omega}(u-V_{m+N-2};t)\leq C_{1,m+N-2}t^{-2N-2m+3}\|(u_0,u_1)\|_{\mathbf{E}(\Omega)}^2, \quad t\geq 1.
\]
Therefore it again suffices to consider the estimates for $v_\ell$ ($\ell=m,\ldots, m+N-3$).
Let $\eta$ be as in the proof of Proposition \ref{prop:optimality1}
and put 
\[
\zeta_R(x)=\eta\left(\frac{|x|^2}{2R^2}\right)
\]
which satisfies 
$\mathbbm{1}_{B(0,R)}\leq \zeta_R \leq \mathbbm{1}_{B(0,2R)}$. 
Then by integration by parts, we can compute
\begin{align*}
\int_{\Omega\cap B(0,R)}|\nabla v_\ell|^2\,dx
&\leq 
\int_{\Omega}|\nabla v_\ell|^2\zeta_R\,dx
\\
&= 
\frac{1}{2}\int_{\Omega}v_\ell^2\Delta \zeta_R\,dx-\int_{\Omega}v_\ell \Delta v_\ell \zeta_R\,dx
\\
&\leq \frac{1}{2}\|v_\ell\|_{L^\infty(\Omega)}^2\|\Delta \zeta_R\|_{L^1(\Omega)}
+\|v_\ell\|_{L^N(\Omega)}\|\Delta v_\ell\|_{L^N(\Omega)}\|\zeta_R\|_{L^{1^{**}}(\Omega)}
\\
&\leq CR^{N-2}t^{-\frac{N}{q}-2\ell}\|(u_0,u_1)\|_{\mathbf{L}^q(\Omega)}^2
\end{align*}
where we have put $1^{**}=\frac{N}{N-2}$ $(N\geq 3)$ and $1_{**}=\infty$ $(N=2)$. 
And also 
\begin{align*}
\int_{\Omega\cap B(0,R)}|\pa_tv_\ell|^2\,dx
&\leq \|\pa_tv_\ell\|_{L^{N}(\Omega)}^2
\|\mathbbm{1}_{B(0,R)}\|_{L^{1^{**}}(\Omega)}.
\\
&\leq CR^{N-2}t^{-N(\frac{1}{q}-\frac{1}{N})-2\ell-2}\|(u_0,u_1)\|_{\mathbf{L}^{q}(\Omega)}^2.
\\
&=CR^{N-2}t^{-\frac{N}{q}-2\ell-1}\|(u_0,u_1)\|_{\mathbf{L}^q(\Omega)}^2.
\end{align*}
As a consequence, it is clarified that the factor 
$E_{\Omega,R}(v_m;t)$ has the worst decay rate $t^{-\frac{N}{q}-2m}$. 
The proof is complete.
\end{proof}

To close the paper, 
we finally discuss some refinement 
of the decay rate of $E_{\Omega,R}(u,t)$ 
when $\Omega$ is a two-dimensional exterior domain.
Thanks to the decay estimate for 
the Dirichlet heat semigroup 
involving the logarithmic function with 
initial values in $L_{d\mu}^1$ (Lemma \ref{lem:2dexteriorheat}), 
we also obtain a decay estimate involving the logarithmically decaying factor $(\log t)^{-4}$.

\begin{proposition}
Let $\Omega$ be a two-dimensional exterior domain 
and let $u$ be the solution of \eqref{eq:Nd-ext-dw} with 
$(u_0,u_1)\in \mathbf{E}(\Omega)\cap \mathbf{L}_{d\mu}^1$.  
Then for every $m\in\Z_{\geq 0}$, 
there exists a positive constant $C_{14,m}$ (depending only on $\Omega$ and $m$) such that 
for every $R>R_\Omega$, 
\[
E_{\Omega,R}(u-V_{m};t)\leq 
C_{14,m}\left(1+\log \frac{R}{r_\Omega}\right)^2
t^{-2-2m}(\log t)^{-4}
\|(u_0,u_1)\|_{\mathbf{E}(\Omega)\cap \mathbf{L}_{d\mu}^1}^2, 
\quad t\geq 2, 
\]
where the family $\{V_m\}_{m\in\Z_{\geq 0}}$ is given 
in Definition \ref{def:asymptotics} with $A=-\Delta_{\Omega}$ and $r_{\Omega}=\inf\limits_{x\in\pa\Omega}|x|$.
\end{proposition}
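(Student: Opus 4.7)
The strategy parallels the proof of Proposition~\ref{prop:Nd-exterior}, but replaces the usual $L^q$--$L^2$ heat-semigroup estimates by the refined two-dimensional logarithmic bounds of Lemma~\ref{lem:2dexteriorheat}. First I will write $u - V_m = (u - V_{m+1}) + v_m$. Theorem~\ref{intro:thm:abst-expansion} with $n = m+1$ yields
\[
E_{\Omega,R}(u - V_{m+1};t) \leq E_\Omega(u - V_{m+1};t) \leq C_{1,m+1}\, t^{-2m-3}\|(u_0,u_1)\|_{\mathbf{E}(\Omega)}^2,
\]
and since $t^{-1}(\log t)^4$ is bounded on $[2,\infty)$ this is dominated by $Ct^{-2m-2}(\log t)^{-4}\|(u_0,u_1)\|_{\mathbf{E}(\Omega)}^2$. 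Consequently, everything reduces to proving the target bound for $E_{\Omega,R}(v_m;t)$ alone.

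Next I derive sharp pointwise estimates on $v_m$ via the representation
\[
v_m(t) = (-\Delta_\Omega)^m\bigl[P_m(t\Delta_\Omega) e^{t\Delta_\Omega}(u_0+u_1) + \widetilde{P}_m(t\Delta_\Omega)e^{t\Delta_\Omega}u_0\bigr]
\]
for appropriate polynomials $P_m, \widetilde{P}_m$. Splitting $e^{t\Delta_\Omega} = e^{(t/2)\Delta_\Omega}\circ e^{(t/2)\Delta_\Omega}$, using uniform boundedness of $s \mapsto s^{\alpha}P(s)e^{-s/2}$ on $[0,\infty)$ (i.e., the analyticity of the semigroup), and then applying Lemma~\ref{lem:2dexteriorheat}(iii) followed by (ii), I obtain for $k \in \{0,1\}$ and $t \geq 2$ the bound
\[
\|H_\Omega^{-1}(-\Delta_\Omega)^k v_m(t)\|_{L^\infty(\Omega)} + \|H_\Omega^{-1}\partial_t v_m(t)\|_{L^\infty(\Omega)} \leq Ct^{-m-k-1}(\log t)^{-2}\|(u_0,u_1)\|_{\mathbf{L}_{d\mu}^1}.
\]
The two $(\log t)^{-1}$ factors come from applying the 2D logarithmic decay twice. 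Since Lemma~\ref{lem:harmonics2d} gives $H_\Omega(x) \leq 1 + \log(|x|/r_\Omega) \leq 1 + \log(R/r_\Omega)$ on $B(0,2R)\cap\Omega$, this promotes to the $L^\infty$ bound on $B(0,2R)$ with an additional factor $(1+\log(R/r_\Omega))$.

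Third, with the cutoff $\zeta_R(x)=\eta(|x|^2/(2R^2))$ used in Proposition~\ref{prop:Nd-exterior} — for which, crucially in dimension two, $\|\Delta\zeta_R\|_{L^1(\Omega)} \leq C$ uniformly in $R$ — integration by parts (using $v_m = 0$ on $\partial\Omega$) yields
\[
\int_\Omega |\nabla v_m|^2 \zeta_R \, dx = \int_\Omega v_m\cdot(-\Delta_\Omega v_m)\cdot \zeta_R\,dx + \frac{1}{2}\int_\Omega v_m^2\, \Delta\zeta_R\,dx.
\]
The last term is controlled by $\|v_m\|_{L^\infty(B(0,2R))}^2\|\Delta\zeta_R\|_{L^1} \leq C(1+\log(R/r_\Omega))^2 t^{-2m-2}(\log t)^{-4}\|(u_0,u_1)\|_{\mathbf{L}_{d\mu}^1}^2$, which is exactly the target. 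The cross integral and the time-derivative contribution are bounded pointwise by, respectively, $CR^2(1+\log(R/r_\Omega))^2 t^{-2m-3}(\log t)^{-4}\|\cdot\|^2$ and $CR^2(1+\log(R/r_\Omega))^2 t^{-2m-4}(\log t)^{-4}\|\cdot\|^2$, where the $R^2$ comes from $|B(0,2R)|$.

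A dichotomy on $R$ versus $\sqrt{t}$ completes the proof. When $R^2 \leq t$, the $R^2$ factors in the previous two estimates are absorbed into the surplus powers of $t$, giving the advertised bound. When $R^2 > t$, one instead falls back on the global energy estimate of Proposition~\ref{prop:optimality5}(i), which gives $E_\Omega(v_m;t) \leq Ct^{-2m-2}(\log t)^{-2}\|(u_0,u_1)\|_{\mathbf{L}_{d\mu}^1}^2$, and observes that in this regime (and for $t$ above some threshold, bounded ranges being absorbed into the constant) $(1+\log(R/r_\Omega))^2 \geq c(\log t)^2$, which converts the $(\log t)^{-2}$ into $(1+\log(R/r_\Omega))^2(\log t)^{-4}$. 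The central difficulty is precisely obtaining this $(\log t)^{-4}$ factor: any direct Cauchy--Schwarz pairing of $\|v_m\|_{L^2(\Omega)}$ with $\|\Delta_\Omega v_m\|_{L^2(\Omega)}$ in the cross term would only recover $(\log t)^{-2}$ — the global energy rate — whereas the improvement requires using the sharper $L^\infty$ bounds (obtained by a \emph{double} application of the 2D logarithmic heat-kernel estimates) together with the two-dimensional miracle that $\|\Delta\zeta_R\|_{L^1}$ is independent of $R$.
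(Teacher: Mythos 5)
Your proof is correct, and the overall skeleton (reduce to $v_m$ via the expansion theorem, cut off with $\zeta_R$ whose Laplacian has $R$-independent $L^1$ norm in dimension two, integrate by parts, and apply the Grigor'yan--Saloff-Coste logarithmic bounds \emph{twice} to each copy of $v_m$) matches the paper. Where you genuinely diverge is in the treatment of the cross term $\int_\Omega v_m\,\Delta v_m\,\zeta_R\,dx$ and of $\int_{\Omega\cap B(0,R)}|\pa_t v_m|^2\,dx$: you bound these by weighted $L^\infty$ norms times the volume $|B(0,2R)|\sim R^2$, which forces the dichotomy $R^2\le t$ versus $R^2>t$ and a fallback to the global energy rate $t^{-2m-2}(\log t)^{-2}$ combined with $(\log t)^2\lesssim(1+\log(R/r_\Omega))^2$ in the second regime. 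The paper instead estimates the cross term by H\"older as $\|H_\Omega^{-1}v_m\|_{L^2(\Omega)}\,\|H_\Omega^{-1}\Delta v_m\|_{L^2(\Omega)}\,\|H_\Omega^{2}\zeta_R\|_{L^\infty(\Omega)}$, using the auxiliary estimate $\|H_\Omega^{-1}e^{s\Delta_\Omega}f\|_{L^2(\Omega)}\le C(\log s)^{-1}\|f\|_{L^2(\Omega)}$ (proved by splitting $\Omega$ into $B(0,\sqrt s)$ and its complement); each weighted $L^2$ factor then gains an extra $(\log s)^{-1}$ over the unweighted rate, so the $(\log t)^{-4}$ appears with no volume factor and no case distinction. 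In this light your closing remark is slightly off: a Cauchy--Schwarz pairing \emph{does} suffice provided one pairs the $H_\Omega^{-1}$-weighted $L^2$ norms rather than the plain ones. Your route buys elementarity (only parts (ii) and (iii) of Lemma \ref{lem:2dexteriorheat} are needed) at the price of the dichotomy; the paper's buys a uniform one-line estimate at the price of one more lemma about the weighted semigroup. Both yield the stated constant $C_{14,m}(1+\log(R/r_\Omega))^2$.
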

\begin{proof}
Fix $R>R_\Omega$ arbitrary.
We apply Theorem \ref{prop:Nd-exterior} with $m$ replaced with $m+1$. 
Then we already have
the following estimate for $u-V_{m+1}=u-V_m-v_m$:
\[
E_{\Omega,R}(u-V_{m+1};t)\leq C_{13,1,m+1}
t^{-2m-4}
\|(u_0,u_1)\|_{\mathbf{E}(\Omega)\cap \mathbf{L}^1(\Omega)}^2, \quad t\geq 1.
\]
Then we consider the estimate for $E_{\Omega,R}(v_m;t)$
via Lemma \ref{lem:2dexteriorheat}.
Using the same function $\zeta_R$ as in Proposition \ref{prop:Nd-exterior}, 
we see that 
\begin{align*}
&\int_{\Omega\cap B(0,R)}|\nabla v_m(t)|^2\,dx
\\
&\leq  
\frac{1}{2}\int_{\Omega}v_m(t)^2\Delta \zeta_R\,dx-\int_{\Omega}v_m(t) \Delta v_m(t) \zeta_R\,dx
\\
&\leq 
\frac{1}{2}\|H_\Omega^{-1}v_m(t)\|_{L^\infty(\Omega)}^2\|H_\Omega^2\Delta \zeta_R\|_{L^1(\Omega)}
+
\|H_\Omega^{-1}v_m(t)\|_{L^2(\Omega)}
\|H_\Omega^{-1}\Delta v_m(t)\|_{L^2(\Omega)}
\|H_\Omega^2\zeta_R\|_{L^\infty(\Omega)}
\\
&\leq 
C\left(1+\log \frac{R}{r_\Omega}\right)^2
\Big(
\|H_\Omega^{-1}v_m(t)\|_{L^\infty(\Omega)}^2
+
\|H_\Omega^{-1}v_m(t)\|_{L^2(\Omega)}
\|H_\Omega^{-1}\Delta v_m(t)\|_{L^2(\Omega)}
\Big).
\end{align*}
Here we put $v_{m*}(t)$ as 
\[
v_{m*}(t)=
\sum_{j=0}^m
\begin{pmatrix}
2m \\ m+j
\end{pmatrix}\frac{(t\Delta)^j}{j!}e^{\frac{2t}{3}\Delta_\Omega}(u_0+u_1)
-
\sum_{k=0}^{m-1}
\begin{pmatrix}
2m-1\\ m+k
\end{pmatrix}\frac{(t\Delta)^k}{k!}e^{\frac{2t}{3}\Delta_\Omega}u_0
\]
which satisfies $v_m(t)=e^{\frac{t}{3}\Delta_\Omega}(-\Delta)^{m}v_{m*}(t)$.
Putting $s=t/3$, we can control the first term 
on the right hand side of the above inequality as
\begin{align*}
\|H_\Omega^{-1}v_m(t)\|_{L^\infty(\Omega)}^2
&\leq 
C_{10}s^{-1}(\log s)^{-2}\|(-\Delta)^m v_{m*}(t)\|_{L^2(\Omega)}^2
\\
&\leq 
CC_{10}s^{-1}(\log s)^{-2-2m}
\Big(
\|e^{s\Delta_\Omega}(u_0+u_1)\|_{L^2(\Omega)}^2
+\|e^{s\Delta_\Omega}u_0\|_{L^2(\Omega)}^2\Big)
\\
&\leq C'C_{10}^2s^{-2-2m}(\log s)^{-4}\|(u_0,u_1)\|_{\mathbf{L}_{d\mu}^1}^2.
\end{align*}
On the one hand, 
we see that for $t>2R_\Omega^2$ and $f\in L^2(\Omega)\cap L_{d\mu}^1$, 
\begin{align*}
\|H_\Omega^{-1}e^{s\Delta_{\Omega}}f\|_{L^2(\Omega)}
&\leq 
\|H_\Omega^{-1}e^{s\Delta_{\Omega}}f\|_{L^2(\Omega \cap B(0,\sqrt{s}))}
+
\|H_\Omega^{-1}e^{s\Delta_{\Omega}}f\|_{L^2(B(0,\sqrt{s})^c)}
\\
&
\leq \|H_\Omega^{-1}e^{s\Delta_{\Omega}}f\|_{L^\infty(\Omega)}
\|\mathbbm{1}\|_{L^2(\Omega \cap B(0,\sqrt{s}))}
+
\|H_\Omega^{-1}\|_{L^\infty(B(0,\sqrt{s})^{c})}\|e^{s\Delta_{\Omega}}f\|_{L^2(\Omega)}
\\
&
\leq CC_{10}(\log s)^{-1}\|f\|_{L^2(\Omega)}.
\end{align*}
we see that the remaining term can be estimated as follows:for $\tau=0,1$, 
\begin{align*}
\|H_\Omega^{-1}(-\Delta)^\tau v_m(t)\|_{L^2(\Omega)}
&\leq 
CC_{10}(\log s)^{-1}\|(-\Delta)^{m+\tau}v_{m*}(t)\|_{L^2(\Omega)}
\\
&\leq 
C'C_{10}s^{-m-\tau}(\log s)^{-1}
\Big(
\|e^{\frac{t}{3}\Delta_\Omega}(u_0+u_1)\|_{L^2(\Omega)}
+\|e^{\frac{t}{3}\Delta_\Omega}u_0\|_{L^2(\Omega)}\Big)
\\
&\leq 
C''C_{10}^2s^{-m-\tau-\frac{1}{2}}(\log s)^{-2}\|(u_0,u_1)\|_{\mathbf{L}_{d\mu}^1}^2.
\end{align*}
	The proof is complete.
\end{proof}

\subsection*{Declarations}
\begin{description}
\item[Data availability] Data sharing not applicable to this article as no datasets were generated or analysed during the current study.
\item[Conflict of interest] The author declares that he has no conflict of interest.
\end{description}

{\small 

}

\newpage 
\end{document}